\documentclass[12pt]{amsart}
\usepackage{amscd,amsmath,amsthm,amssymb,mathtools}
\usepackage{mathrsfs}
\usepackage{comment}
\usepackage[all,cmtip]{xy}
\usepackage{color}
\usepackage{enumitem}

\definecolor{cadmiumgreen}{rgb}{0.0, 0.42, 0.24}
\usepackage[
colorlinks, citecolor=cadmiumgreen,
pagebackref,
pdfauthor={Farbod Shokrieh, Chenxi Wu}, 
pdftitle={Canonical measures on metric graphs and a Kazhdan's theorem},
pdfstartview ={FitV},
]{hyperref}

\usepackage[
alphabetic,
backrefs,
msc-links,
nobysame,
lite,
]{amsrefs} 

\usepackage[left=2.5cm,top=2.5cm,right=2.5cm]{geometry}

\setlength\marginparwidth{45pt}

\usepackage{tikz, float} \usetikzlibrary {positioning}
\usetikzlibrary{calc,decorations.markings}
\usetikzlibrary{shapes,snakes}

\def\ZZ{{\mathbb Z}}
\def\RR{{\mathbb R}}
\def\CC{{\mathbb C}}
\def\EE{{\mathbb E}}

\def\B{{\mathcal B}}
\def\Oc{{\mathcal O}}

\def\A{{\mathcal A}}

\def\M{{\mathcal M}}

\def\G{{\mathbf G}}

\def\Rsc{{\mathscr R}}

\def\opn#1#2{\def#1{\operatorname{#2}}} 
\opn\depth{depth} 
\opn\codim{codim}
\opn\ini{in} 
\opn\LM{LM}
\opn\LC{LC}
\opn\NF{NF}
\opn\Merge{Merge}
\opn\sgn{sgn}
\opn\div{div} 
\opn\Div{Div} 
\opn\Pic{Pic}
\opn\Prin{Prin}
\opn\Del{Del}
\opn\op{op}
\opn\ends{ends}
\opn\indeg{indeg} 
\opn\outdeg{outdeg}
\opn\red{red}
\opn\Spec{Spec} 
\opn\Supp{Supp} 
\opn\Meas{Meas}
\opn\supp{supp} 
\opn\Ker{Ker} 
\opn\Coker{Coker} 
\opn\sign{sign}
\opn\Hom{Hom}
\opn\Tor{Tor} 
\opn\id{id}
\opn\cl{cl}
\opn\span{span}
\opn\trace{trace}
\opn\Image{Image}
\opn\con{conv} 
\opn\relint{rel.int} 
\opn\vol{vol}
\opn\val{val}
\opn\Ber{Ber}
\opn\can{can}
\opn\syz{{\rm syz}}
\opn\spoly{{\rm spoly}}
\opn\LM{{\rm LM}}
\opn\lm{{\rm lm}}
\opn\lcm{{\rm lcm}} 
\opn\A{\mathcal A}
\opn\L{\mathrm{L}}
\opn\dist{dist}
\opn\pd{pd}
\opn\en{en}
\opn\PL{\mathrm{PL}}
\opn\H{\mathrm{H}}
\opn\V{\mathrm{V}}
\opn\B{\mathcal{B}}
\opn\Tr{\mathrm{Tr}}
\opn\Gr{\mathbf{Gr}}

\def\Implies{\ifmmode\Longrightarrow \else
        \unskip${}\Longrightarrow{}$\ignorespaces\fi}
\def\implies{\ifmmode\Rightarrow \else
        \unskip${}\Rightarrow{}$\ignorespaces\fi}
\def\iff{\ifmmode\Longleftrightarrow \else
        \unskip${}\Longleftrightarrow{}$\ignorespaces\fi}
\newtheorem{Theorem}{Theorem}[section]
\newtheorem{Lemma}[Theorem]{Lemma}
\newtheorem{Corollary}[Theorem]{Corollary}
\newtheorem{Proposition}[Theorem]{Proposition}

\theoremstyle{remark}
\newtheorem{Remark}[Theorem]{Remark}

\theoremstyle{definition}
\newtheorem{Example}[Theorem]{Example}

\newtheorem{Definition}[Theorem]{Definition}

\def\qed{\ifhmode\textqed\fi
      \ifmmode\ifinner\quad\qedsymbol\else\dispqed\fi\fi}
\def\textqed{\unskip\nobreak\penalty50
       \hskip2em\hbox{}\nobreak\hfil\qedsymbol
       \parfillskip=0pt \finalhyphendemerits=0}
\def\dispqed{\rlap{\qquad\qedsymbol}}

\tikzstyle{Cwhite}=[scale = .8,circle, fill = white, minimum size=3mm] 
\tikzstyle{Cgray}=[scale = .4,circle, fill = gray, minimum size=3mm] 
\tikzstyle{Cblack2}=[scale = .4,circle, fill = black, minimum size=5mm] 
\tikzstyle{Cblack}=[scale = .7,circle, fill = black, minimum size=3mm]
\tikzstyle{C0}=[scale = .9,circle, fill = black!0, inner sep = 0pt, minimum size=3mm]
\tikzstyle{C1}=[scale = .7,circle, fill = black!0, inner sep = 0pt, minimum size=3mm]
\tikzstyle{Cred}=[scale = .4,circle, fill = red, minimum size=3mm]

\begin{document}

\title[Canonical measures on metric graphs and a Kazhdan's theorem]{Canonical measures on metric graphs and \\ a Kazhdan's theorem}

\author {Farbod Shokrieh}
\address{Cornell University\\
Ithaca, New York 14853-4201\\
USA}
\email{\href{mailto:farbod@math.cornell.edu}{farbod@math.cornell.edu}}

\author{Chenxi Wu}
\address{Rutgers University\\
Piscataway, New Jersey 08854-8019\\USA}
\email{\href{mailto:wuchenxi2013@gmail.com}{wuchenxi2013@gmail.com}}

\subjclass[2010]{14T05, 05C12, 05C63, 14H30, 
20F65, 57M60}

\date{\today}

\begin{abstract}
We extend the notion of canonical measures to all (possibly non-compact) metric graphs. This will allow us to introduce a notion of ``hyperbolic measures'' on universal covers of metric graphs. Kazhdan's theorem for Riemann surfaces describes the limiting behavior of canonical (Arakelov) measures on finite covers in relation to the hyperbolic measure. We will prove a generalized version of this theorem for metric graphs, allowing any infinite Galois cover to replace the universal cover. We will show all such limiting measures satisfy a version of Gauss-Bonnet formula which, using the theory of von Neumann dimensions, can be interpreted as a ``trace formula''. In the special case where the infinite cover is the universal cover, we will provide explicit methods to compute the corresponding limiting (hyperbolic) measure. Our ideas are motivated by non-Archimedean analytic and tropical geometry.
\end{abstract}

\maketitle

\setcounter{tocdepth}{1}
\tableofcontents

\section{Introduction}

\subsection{Background}
Metric graphs, in many respects, can be thought of as non-Archimedean analogues of Riemann surfaces. The analogy between metric graphs and Riemann surfaces can be made precise, for example, by using Berkovich's theory of non-Archimedean analytic geometry or by using tropical geometry; any metric graph may be thought of as a skeleton for a Berkovich analytic curve.
For metric graphs, there is a well-behaved theory of divisors and Jacobians (\cite{BN07, MK, ABKS}). In geometric group theory, the analogies between the Teichm\"uller space and the outer space (\cite{culler1986moduli}), the curve complex and the free factor (free splitting, cyclic splitting) complex (\cite{handel2013free, bestvina2014hyperbolicity, mann2014hyperbolicity}), and Teichm\"uller polynomials and McMullen polynomials (\cite{dowdall2015dynamics}) are subjects of active research.

In this paper, we pursue this analogy further in the context of metrics and measures.
For example, a basic question that arises from this analogy is the following:
what is the correct notion of the hyperbolic measure on metric graphs of genus at least $2$? Our main results, in particular, will provide a satisfactory answer to this question. Our approach is guided by non-Archimedean analytic and tropical considerations.

It is instructive to first consider the case of Riemann surfaces. 
A compact Riemann surface $X$ of genus at least $2$ can be given canonical metrics in a few different ways. One obvious choice is by uniformization; since the Poincar\'e metric on the unit disc (the universal cover) is invariant under the action of ${\rm PSL}(2, \mathbb{R})$, it induces a metric on $X$, which is the usual {\em hyperbolic metric}. We will refer to the associated volume form as the {\em hyperbolic measure}. 
Another choice is by embedding the surface inside its Jacobian, and pulling back the Euclidean metric. This metric is referred to as the {\em canonical metric}, the {\em Bergman metric}, or the {\em Arakelov metric} in the literature. The associated volume form is usually called the {\em canonical measure}, or the {\em Arakelov measure}.
A natural question is the relationship between these two metrics (resp. measures). A celebrated theorem of Kazhdan (\cite[\S3]{Kaj}, see also \cite{mumford, Kazhdan, rhodes1993sequences, mcmullen2013entropy}) states that the hyperbolic metric (resp. the hyperbolic measure) is, up to scaling, the limit of the canonical metrics (resp. the canonical measures). More precisely, let $X_n \rightarrow X$ be a cofinal ascending sequence of finite Galois covers which converge to the universal cover. Then the metrics on $X$ inherited from canonical metrics under $X_n \rightarrow X$ converge to a multiple of the hyperbolic metric. It is well-known that the canonical metric on the unit disc is the same as the hyperbolic metric (up to a constant multiple). So this result can be restated as follows: the limit of the induced canonical metrics (resp. canonical measures) coincides with the induced canonical metric (resp. canonical measures) from the limiting space (the universal cover). So, informally, Kazhdan's theorem can be interpreted as saying that the two tasks ``taking limit'' and ``computing the inherited metrics (or measures)'' commute.

For metric graphs, we will adopt the language of measures instead of metrics. There is already a satisfactory analogue of the notion of canonical measures for {\em compact} metric graphs. This was introduced by Zhang in \cite{zhang1993admissible} (see also \cite{chinburg1993capacity}) in terms of potential theory (electrical networks) on graphs (see Definition~\ref{def:zhang}). The notion was related to the Abel-Jacobi map in tropical geometry by Baker and Faber in \cite{baker2011metric}. The analogy goes far; canonical measures on metric graphs are closely related to the non-Archimedean Monge-Amp\`ere (Chambert-Loir) measures on Berkovich curves (see \cite[3.2.3]{ChLoir} and \cite{Heinz}). The Archimedean version of this statement is also true: the canonical measures are closely related to the Archimedean Monge-Amp\`ere measures on Riemann surfaces. Furthermore, Amini in \cite{amini2014equidistribution} proves that the limiting distribution of Weierstrass points on a Berkovich curve is closely related to the canonical measure on its skeleton (which is a compact metric graph). This result corresponds to a similar result on Riemann surfaces due to Mumford and Neeman (\cite{mumford}, \cite{Neeman}). In fact, one expects that the canonical measure on metric graphs should be a ``limit'' of canonical measures on Riemann surfaces, where the limit should perhaps be taken in the sense of Boucksom and Jonsson (\cite{BJ}). See \cite{deJong} (especially Remark~16.4) for a result in this direction.

\subsection{Our contribution}
Our first contribution is to extend the notion of canonical measures to {\em all} metric graphs. The existing definitions in terms of electrical networks or in terms of tropical Jacobians does not work for non-compact metric graphs. 
We will give two new definitions (in Definition~\ref{def:2} and in Proposition~\ref{prop:mcmul}), which make sense for all metric graphs, including non-compact ones. 

As a first application of this notion, one obtains an appropriate candidate for the notion of {\em hyperbolic measures} for metric graphs.
As already mentioned, there is no satisfactory analogue of this notion for metric graphs, although candidates can certainly be found in the literature (see e.g. \cite[Appendix]{McMullenEntr} and \cite{Feng}). 
Motivated by the above discussion on Riemann surfaces and our non-Archimedean considerations, we will {\em define} the hyperbolic measure to be the measure induced from the canonical measure on the universal cover.

Having established these notions, one might hope to have an analogous theorem to Kazhdan's for metric graphs. We will, in fact, prove a much stronger statement for metric graphs: 

\vspace{3mm}
\noindent{\bf Theorem A.} 
Let $\Gamma'$ be {\em any infinite Galois cover} of a compact metric graph $\Gamma$. Let $\{\Gamma_n\}$ be a sequence of finite Galois covers converging to $\Gamma'$. Then the following two measures on $\Gamma$ coincide:

(1) limit of measures induced from the canonical measures on $\Gamma_n$, and 

(2) the measure induced from the canonical measure on $\Gamma'$.

\vspace{1mm}
See Theorem~\ref{thm:Kazhdan} for a precise statement. 
\vspace{2mm}

It is easy to check on examples (see e.g. Example~\ref{ex:amusing}) that our measure is different from the candidates for hyperbolic measures mentioned above. Most importantly, other measures will not satisfy the desired analogue of Kazhdan's theorem.

A fundamental property of the hyperbolic measure on compact Riemann surfaces is the well-known consequence of the Gauss-Bonnet formula that the total mass of the measure has a simple expression in terms of the Euler characteristic of the surface. Our second main theorem states that {\em all} limiting measures coming from infinite Galois covers satisfy the following analogue of the Gauss-Bonnet formula:

\vspace{3mm}
\noindent{\bf Theorem B.} 
Let $\Gamma$ be a compact metric graph, and $\Gamma'$ be an arbitrary infinite Galois cover. Let $\mu$ denotes the measure on $\Gamma$ induced from the canonical measure on $\Gamma'$. Then 
$\mu(\Gamma) = -\chi(\Gamma)$,
where $\chi(\Gamma)$ denotes the Euler characteristic of the metric graph $\Gamma$.

\vspace{1mm}
See Theorem~\ref{thm:GaussBonnet} for a precise statement.
\vspace{2mm}

Indeed, Theorem B is also the most technical ingredient in the proof of Theorem A.
This is a rather technical result if one restricts oneself to elementary tools: if the infinite cover is {\em transient} (in the sense of Definition~\ref{def:transient}), the theorem could be proved by a difficult computation. If the infinite graph is not transient, the result is significantly more complicated.
In this paper, we manage to avoid all the difficulties by taking a more sophisticated approach, namely the theory of von Neumann algebras and dimensions (see \S\ref{sec:gdim}). This also has the advantage of interpreting our Gauss-Bonnet identity as a {\em trace formula}. Moreover, it follows that one can interpret the Gauss-Bonnet identity as giving a canonical {\em measure-theoretic decomposition} of the first $L^2$ Betti number of $\Gamma$ (see Remark~\ref{rmk:GB}).

While the universal cover of all Riemann surfaces of genus at least $2$ is the Poincar\'e disc, different metric graphs have different universal covers. This means, for universal covers, our limiting (hyperbolic) measure does closely depend on the geometry of the particular universal cover. We will therefore study this case in some detail. For example, we prove the following result.

\vspace{3mm}
\noindent{\bf Theorem C.} 
On a compact metric graph $\Gamma$, the hyperbolic measure can be computed by solving an explicit collection of algebraic equations described in terms of the combinatorics of $\Gamma$.

\vspace{1mm}
See Theorem~\ref{cal1} and Theorem~\ref{cal2} for precise statements.
\vspace{2mm}

We also show the following result about {\em regular graphs} (Corollary~\ref{cor:reg}). 

\vspace{3mm}
\noindent{\bf Theorem D.} 
Let $k\geq 3$, and assume $G$ is a finite weighted graph with uniform edge lengths which is $k$-regular. Then the hyperbolic measure $\mu$ on the corresponding metric graph satisfies $\mu(e)={1-2/k}$ for all edges $e$ in $G$.

\vspace{1mm}
The fact that the hyperbolic measures for regular graphs should give equal mass to edges was anticipated by Curtis McMullen in a private communication. 
\vspace{2mm}

Finally, we will show that our hyperbolic measure is closely related to other important measures on the universal cover:

\vspace{3mm}
\noindent{\bf Theorem E.} 
On a compact metric graph $\Gamma$, the hyperbolic measure can be explicitly computed in terms of a {\em Poisson-Jensen} (or {\em equilibrium}) measure on its universal cover.

\vspace{1mm}
See Theorem~\ref{PJ} for a precise statement.
\vspace{2mm}

\subsection{Further directions}

From the non-Archimedean point of view, the universal cover of a graph is, in fact, the analogue of the Schottky cover of a Riemann surface. More precisely, if a compact metric graph $\Gamma$ is considered as a skeleton of a Berkovich analytic Mumford curve $X^{\rm an}$, then the universal cover $\Gamma'$ is the geodesic hull of the limit points of the action of the Schottky group on the Berkovich analytic $\mathbf{P}^{1, {\rm an}}$ (see \cite{Berk} and \cite{GvdP}). 

This leads one to wonder if one could also prove a Kazhdan type theorem for the Schottky cover of Riemann surfaces. In an upcoming paper (\cite{BSW}), the authors (together with Hyungryul Baik) give a positive answer to this question. In fact, we show that for {\em any infinite Galois cover} of a Riemann surface, one has a Kazhdan theorem analogous to Theorem A. The proof uses some of the fundamental ideas in this paper, adapted to the Archimedean situation. For example, we also prove an analogue of Theorem B for Riemann surfaces, using ideas from von Neumann algebras. 

McMullen, in \cite[Appendix A]{McMullenEntr}, makes an explicit connection between the notion of optimal metrics on graphs and the thermodynamics formalism. It is conceivable that our discussion on equilibrium measures in \S\ref{sec:PJEquil} may fit into that perspective as well.

Our results generalize easily to ``augmented metric graphs'' (in the sense of \cite{ABBR1}), and  ``admissible measures'' (in the sense of \cite{zhang1993admissible}). See Remark~\ref{rem:admissiblemeas}.

Finally, one might expect our techniques could be slightly modified to prove a Kazhdan's theorem for other types of limits of graphs, most importantly Benjamini-Schramm limits (in the sense of \cite{BenSch, ABBGNRS, ACFK}). On another direction, motivated by the viewpoint offered by Berkovich's theory, one might want to prove similar theorems for unramified harmonic covers (in the sense of \cite{ABBR1, ABBR2}) which include topological covers. One might also want to consider covers corresponding to ``tempered fundamental groups'' (in the sense of \cite{Andre,Lepage}). At the moment, a clear description of the notion of the ``universal cover'' (or any limiting cover) in these generalized situations seems to not have been studied.

\subsection{Structure of the paper}
In \S\ref{sec:Background} we will review some basic notions and properties of metric graphs. Our main focus is to appropriately extend the existing notions for compact metric graphs to all (possibly non-compact) metric graphs.
In \S\ref{sec:forms} the notions of (piecewise linear) forms, functions, and measures on metric graphs are studied. Again, since non-compact metric graphs will appear in our work, care must be taken to appropriately generalize these tropical notions.
In \S\ref{sec:gdim} we quickly review the parts of the theory of von Neumann algebras and dimensions that arise in the presence of group actions, which is the situation appearing in our context.
In \S\ref{sec:canon} we first review the notion of canonical measures on compact metric graphs, including the (original) formula in terms of electrical networks due to Zhang. We then give our generalization to all metric graphs. Next, some basic properties of the canonical measure are established, including the behavior under contraction maps, and an approximation theorem in terms of exhaustions by compact subgraphs. The most important result in this section is the proof of our Gauss-Bonnet type formula (Theorem B).
In \S\ref{sec:kazh} our generalized Kazhdan's theorem for metric graphs (Theorem A) is proved.
In \S\ref{sec:examples} we provide some basic examples, including the case of regular graphs (Theorem D). We then study the case of universal covers closely, describing how one can compute the corresponding induced (hyperbolic) measures (Theorem C). We also make precise connections with certain Poisson-Jensen or equilibrium measures (Theorem E).

\subsection*{Acknowledgments}  We would like to thank Matthew Baker and Curtis McMullen for helpful remarks and suggestions. We also thank Omid Amini, Matthew Baker, Robin de Jong, and the anonymous referee for valuable comments on an earlier draft.

\section{Metric graphs, models, and Galois covers}
\label{sec:Background}
\subsection{Metric graphs}
\begin{Definition}
A {\em metric graph} (or a {\em locally finite abstract tropical curve}) $\Gamma$ is a connected metric space such that every point $p\in\Gamma$ has a neighborhood isometric to a star-shaped set of finite valence and of radius at least $\epsilon$, for some fixed $\epsilon >0$, endowed with the path metric.
\end{Definition}

By a {\em star-shaped} set of finite {\em valence} $n$ and {\em radius} $r$ we mean a set of the form
\[
S(n, r) = \{ z \in \CC \colon z=te^{k \frac{2\pi i}{n}} \text{for some } 0 \leq t < r \text{ and some } k \in \ZZ \} \, ,
\]
where distance between two points in $S(n,r)$ is the length of the shortest path connecting the two points.

In tropical geometry, metric graphs are usually compact. We do not require them to be compact in our definition because we will also be working with infinite covers of (compact) metric graphs.

\subsection{Models}

\begin{Definition} \label{def:metricgraph}
A \emph{weighted graph} $G$ is a connected combinatorial graph (i.e. 1-dimensional, locally finite simplicial complex) with a positive $\RR$-valued length function $\ell$ defined on its edge set. 
\end{Definition}

As usual, the vertex set and the edge set of $G$ will be denoted by $V(G)$ and $E(G)$. A {\em bridge} is an edge $e \in E(G)$ such that $G\backslash e$ is disconnected.

Any weighted graph $G$ gives rise to a metric space $\Gamma$ as follows: for each edge $e \in E(G)$, make an open line segment of length $\ell(e)$, then identify the different ends of different line segments according to the combinatorics of $G$. If we further assume that the length function $\ell$ is uniformly bounded below, then the metric space $\Gamma$ obtained in this way is a metric graph in the sense of Definition~\ref{def:metricgraph}.

Under this construction, a subgraph of $G$ will give a closed subset of the corresponding metric space, which we will also refer to as a {\em subgraph} of $\Gamma$.

Let $\EE(G)$ denote the set of oriented edges of $G$; for each edge in $E(G)$ there are two edges $e$ and $\bar{e}$ in $\EE(G)$. An element $e$ of $\EE(G)$ is called an {\em oriented} edge, and $\bar{e}$ is called the {\em opposite} of $e$. For an oriented edge $e \in \EE(G)$ we will use the same letter $e \in E(G)$ to denote the underlying (unoriented) edge. 
We have a map 
\[
\begin{aligned}
\EE(G) &\rightarrow V(G) \times V(G) \\
e &\mapsto (e^{-}, e^{+})
\end{aligned}
\]
sending an oriented edge $e$ to its tail (or its initial vertex) $e^{-}$ and its head (or its terminal vertex) $e^{+}$. An {\em orientation} of $G$ is a choice of subset $\Oc \subset \EE(G)$ such that $\EE(G)$ is the disjoint union of $\Oc$ and $\bar{\Oc}=\{\bar{e} \colon  e \in \Oc \}$.

A {\em walk} $\beta$ in $G$ is an alternating sequence of vertices $v_i$ and oriented edges $e_i$, 
\[v_0 , e_0 , v_1 , e_1 , v_2 , \ldots , v_{k-1} , e_{k-1} , v_k\] such that $(e_i)^- = v_i$ and $(e_i)^+ = v_{i+1}$. A {\em closed} walk is one that starts and ends at the same vertex.

\begin{Definition}
Given a metric graph $\Gamma$, by a {\em vertex set} we mean a discrete  $V\subset\Gamma$ which includes all the points of valence not equal to $2$, such that all the connected component of $\Gamma \backslash V$ are isometric to bounded open intervals. 
\end{Definition}

With the choice of vertex set $V \subset\Gamma$, one can build a weighted graph $G$ as follows. Let $V(G)=V$. Identify $E(G)$ with the connected components of $\Gamma\backslash V$. Note that, by assumption, all these connected components are isometric to open intervals. The length function on $E(G)$ will record the distance between the endpoints of the corresponding interval in $\Gamma$. An edge $e \in E(G)$ corresponding to an open interval $I \subseteq \Gamma$, connect the point $u$ and $v$ (not necessarily distinct) in $\Gamma$ if and only if $\{ u , v\} = \cl(I) \backslash I$. Here $\cl(\cdot)$ denotes the topological closure. As a result, the metric space obtained from $G$ is isometric to $\Gamma$. See Figure~\ref{fig:models.ex}.

\begin{Definition}
\begin{itemize}
\item[]
\item[(i)] Given a metric graph $\Gamma$, we call the weighted graph $G$ constructed as above a {\em model} for $\Gamma$. 
\item[(ii)] Let $G$ and $G'$ be two models for $\Gamma$. We call $G'$ a {\em refinement} of $G$ if $V(G) \subseteq V(G')$.
\end{itemize}
\end{Definition}

Models for $\Gamma$ form a directed set under refinement, as any two models for $\Gamma$ have a common refinement. 

\begin{figure}[h!]
$$
\begin{xy}
(0,0)*+{
	\scalebox{.7}{$
	\begin{tikzpicture}
	\draw[black, ultra thick, -] (0,1.2) to [out=-45,in=90] (.6,-.05);
	\draw[black, ultra thick] (.6,0) to [out=-90,in=45] (0,-1.2);
	\draw[black, ultra thick, -] (0,1.2) to [out=-135,in=90] (-.6,-.05);
	\draw[black, ultra thick] (-.6,0) to [out=-90,in=135] (0,-1.2);
	\draw[black, ultra thick, -] (0,1.2) -- (0,0);
	\draw[black, ultra thick] (0,0.1) -- (0,-1.2);
	\end{tikzpicture}
	$}
};
(-6.5,0)*+{\mbox{{\smaller $2$}}};
(2,-1)*+{\mbox{{\smaller $1$}}};
(6.75,-0.05)*+{\mbox{{\smaller $2$}}};
(0,-15)*+{\Gamma};
\end{xy}
\ \ \ \ \ \ \ \ \ \ 
\begin{xy}
(0,0)*+{
	\scalebox{.7}{$
	\begin{tikzpicture}
	\draw[black, ultra thick, ->] (-1.2,0) to (-.6,.6);
	\draw[black, ultra thick] (-.6,.6) to (0,1.2);
	\draw[black, ultra thick, ->] (-1.2,0) to (0,0);
	\draw[black, ultra thick] (0,0) to (1.2,0);
	\draw[black, ultra thick, ->] (-1.2,0) to (-.6,-.6);
	\draw[black, ultra thick] (-.6,-.6) to (0,-1.2);
	\draw[black, ultra thick, ->] (0,1.2) to (.6,.6);
	\draw[black, ultra thick] (.6,.6) to (1.2,0);
	\draw[black, ultra thick, ->] (1.2,0) to (.6,-.6);
	\draw[black, ultra thick] (.6,-.6) to (0,-1.2);
	\fill[black] (0,1.2) circle (.1);
	\fill[black] (0,-1.2) circle (.1);
	\fill[black] (1.2,0) circle (.1);
	\fill[black] (-1.2,0) circle (.1);
	\end{tikzpicture}
	$}
};
(0,-15)*+{(G, \Oc)};
\end{xy}
\ \ \ \ \ \ \ \ 
\!\!\!\!\!\!\!\!\!\!\!\!\!\!\!\!\!\!\!
$$
\caption{A Metric graph $\Gamma$, a model $G$ with $\ell \equiv 1$ and an orientation $\Oc$.}\label{fig:models.ex}
\end{figure}
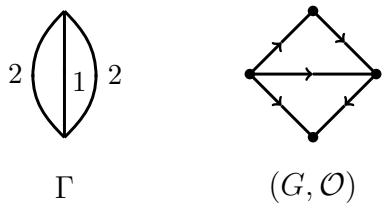

Let $\Gamma$ be a metric graph and fix $x, y \in \Gamma$. We call $\gamma$ a {\em (piecewise linear) path} from $x$ to $y$ if there exists a model $G$ for $\Gamma$ such that $x, y \in V(G)$, and $\gamma$ is represented by a walk on $G$. A path from $x$ to $x$ will be called {\em closed}.

\begin{Remark}
	A weighted graph can be naturally thought of as an {\em electrical network}, where each line segment of length $\ell(e)$ is thought of as a {\em resistor} with resistance $\ell(e)$.
\end{Remark}

\begin{Definition}
\begin{itemize}
\item[]
\item[(i)] The {\em genus} $g(\Gamma)$ of a metric graph $\Gamma$ is its topological genus, i.e. the dimension of its first homology or the rank of its fundamental group. 
\item[(ii)] The {\em Euler characteristic} of a metric graph of $\Gamma$ is its topological Euler characteristic, i.e. $\chi(\Gamma) = 1-g(\Gamma)$.
\end{itemize}
\end{Definition}
If $\Gamma$ has a {\em finite} model $G$, then the genus of $\Gamma$ is simply $g(\Gamma)=|E(G)|-|V(G)|+1$.

\subsection{Galois covers}

\begin{Definition}
\begin{itemize}
\item[]
\item[(i)] Given any metric space $X$, by a \emph{covering space} we mean a metric space $Y$ together with a continuous map $Y\rightarrow X$, called the \emph{covering map}, such that for any point $p\in X$, there is an open neighborhood $U$ of $p$ whose preimage under the covering map consists of a disjoint union of open sets that are sent to $U$ {\em isometrically} by the covering map. 
\item[(ii)] The {\em degree} of the covering map is the number of these disjoint copies.
\end{itemize}
\end{Definition}

\begin{Remark}
Any path-connected covering space of a metric graph is also a metric graph. 
\end{Remark}

 \begin{Definition}
 When both $X$ and $Y$ are path-connected, a covering map $\phi \colon Y\rightarrow X$ induces a map $\phi_* \colon \pi_1(Y,y)\rightarrow\pi_1(X,x)$, where $\phi(y)=x$. If $\phi_*\left((\pi_1(Y,y)\right)$ is a {\em normal} subgroup of $\pi_1(X,x)$, we call $Y$ a \emph{Galois} cover of $X$. 
 \end{Definition}

Alternatively, a covering $Y\rightarrow X$ between path-connected spaces is \emph{Galois} if and only if there is a free action of a discrete group $\Lambda$ on $Y$ such that $Y/\Lambda$ is isometric to $X$.

\section{Functions, forms, and measures on metric graphs}
\label{sec:forms}

\subsection{Piecewise linear functions and piecewise constant forms}

 \subsubsection{Bilinear forms on chains and cochains}

Let $G$ be a model for the metric graph $\Gamma$. As usual, we let $C_1(G, \RR)$, $C^1(G, \RR)$, $H_1(G, \RR) \simeq H_1(\Gamma, \RR)$, and $H^1(G, \RR) \simeq H^1(\Gamma, \RR)$ denote the $1$-chains, $1$-cochains, first homology, and first cohomology with coefficients in $\RR$.
We fix an orientation $\Oc$ on $G$ so that we can represent the space of $1$-chains by
\[C_1(G,\RR) = \frac{\bigoplus_{e \in \mathbb{E}(G)} \RR e }{ \langle e+\bar{e} \colon e \in \Oc \rangle} \simeq \bigoplus_{e \in \Oc} \RR e \, .\]

 We define an inner product
\begin{equation}\label{eq:ChainBilinear}
 \langle \cdot , \cdot \rangle \colon C_1(G, \RR) \times C_1(G, \RR) \rightarrow \RR \end{equation}  by 
\[
\langle e, e' \rangle = 
\begin{cases}
\ell(e) &\text{, if } e =e'\\
0 &\text{, if } e \ne e' \\
\end{cases}
\]
for all $e,e'\in\Oc$.

The space of $1$-cochains is denoted by $C^1(G,\RR)$, which is the vector space of all real-valued functions on $\EE(G)$ that are symmetric in the sense that for $\omega \in C^1(G,\RR)$ we have $\omega(\bar{e}) = - \omega(e)$. 
Here $\omega(e)$ denotes the evaluation of the functional $\omega$ on $e \in C_1(G, \RR)$. For $e \in \Oc$, we denote by $de\in C^1(G,\RR)$ the functional defined by 
\[
(de)(e') = 
\begin{cases}
\ell(e) &\text{, if } e =e'\\
0 &\text{, if } e \ne e'\\
\end{cases}
\]
for all $e,e'\in\Oc$. 
Any element $\omega\in C^1(G,\RR)$ can be written as a (possibly infinite) sum \[\omega=\sum_{e\in \Oc}\omega_e \, de \, , \quad \text{where} \quad \omega_e={\omega(e)}/{\ell(e)} \,.\] 
We have a (partially defined) bilinear form 
\begin{equation}\label{eq:coChainBilinear}
\langle \cdot , \cdot \rangle \colon C^1(G, \RR) \times C^1(G, \RR) \rightarrow \RR
\end{equation} 
sending the pair 
\[\omega=\sum_{e\in \Oc}\omega_e \, de , \quad \omega'=\sum_{e\in \Oc}\omega'_e \, de 
\]
to 
\[
\langle \omega , \omega' \rangle= \sum_{e \in \Oc}{\omega_e \, \omega'_e \,\ell(e)}\, .
\]

\begin{Remark} \phantomsection \label{rmk:conv}
\begin{itemize}
\item[]
\item[(i)] If $G$ is an infinite graph, then the sum on the right in \eqref{eq:coChainBilinear} might not converge, or might be $\infty$.
\item[(ii)] The bilinear form \eqref{eq:coChainBilinear} on $C^1(G, \RR)$ coincides with the bilinear form obtained from \eqref{eq:ChainBilinear} on $C_1(G,\RR)$ by transport of structure.
\end{itemize}
\end{Remark}

\subsubsection{Piecewise constant forms}

Let $G$ be a weighted graph with a length function $\ell$. Assume $G'$ is the graph obtained by subdividing an edge $e \in E(G)$ into two edges $e_1$ and $e_2$ with $\ell(e_1)+\ell(e_2) = \ell(e)$ (see Figure~\ref{fig:subdivide}). For any $\omega \in C^1(G,\RR)$ there is a natural $\omega' \in C^1(G',\RR)$ defined by 
\begin{equation} \label{eq:subdivide}
\omega'_{e'} = 
\begin{cases}
\omega_e &\text{, if } {e'} =e_1\\
\omega_e &\text{, if } {e'}=e_2\\
\omega_{e'} &\text{, otherwise. } 
\end{cases}
\end{equation}

\begin{figure}
\begin{tikzpicture}[scale=2, ray/.style={decoration={markings,mark=at position .5 with {
      \arrow[>=latex]{>}}},postaction=decorate}
]
\draw[-](0,0)--(1,0);
\draw[-](2,0)--(3,0);
\draw[fill](0,0) circle (1pt);
\draw[fill](1,0) circle (1pt);
\draw[fill](2,0) circle (1pt);
\draw[fill](3,0) circle (1pt);
\draw[fill](2.3,0) circle(1pt);
\node at (.5,0.2){$e$};
\node at (2.15,0.2){$e_1$};
\node at (2.65,0.2){$e_2$};

\end{tikzpicture}

\caption{\label{fig:subdivide}Subdivision of an edge: $\ell(e) = \ell(e_1)+\ell(e_2)$}
\end{figure}
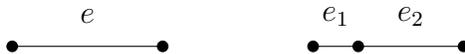 

The association $\omega \mapsto \omega'$ gives an injection $\iota \colon C^1(G,\RR) \hookrightarrow C^1(G',\RR)$ respecting the associated bilinear forms $\langle \cdot , \cdot \rangle$.

With this observation, we may consider the family $\{C^1(G,\RR)\}$, where $G$ varies over all models for the metric graph $\Gamma$, as a directed system. Naturally, we will look at the direct limit 
\begin{equation}\label{eq:forms}
 \Omega^1(\Gamma) = \varinjlim{C^1(G,\RR)}\, .
 \end{equation}
 
Because all the maps induced by refinements are injective, we may (and will) identify $C^1(G,\RR)$ with its image in $\Omega^1(\Gamma)$.
Since the bilinear form $\langle \cdot , \cdot \rangle$ is respected under $\iota$, we also obtain a bilinear form $\langle \cdot , \cdot \rangle$ on $\Omega^1(\Gamma)$. 

\begin{Definition}
\begin{itemize}
\item[]
\item[(i)] We call an element of $\Omega^1(\Gamma)$ a {\em piecewise constant 1-form} (or just a {\em form}) on $\Gamma$
\item[(ii)] A form $\omega \in \Omega^1(\Gamma)$ that can be represented by an element in $C^1(G,\RR)$ is said to be {\em compatible} with the model $G$.
\end{itemize}
\end{Definition}

\subsubsection{$L^2$ spaces}
If the metric graph $\Gamma$ is not compact, then any model $G$ for $\Gamma$ must be an infinite graph.
  As noted in Remark \ref{rmk:conv}~(i), the bilinear pairing on $C^1(G,\RR)$ (hence on $H^1(G,\RR)$ and on $\Omega^1(\Gamma)$) is only partially defined.

\begin{Definition}
\begin{itemize}
\item[]
\item[(i)] We call an element of 
\[\Omega^1_{L^2}(\Gamma) = \{\omega\in \Omega^1(\Gamma) \colon \langle\omega,\omega\rangle<\infty\} \]
an {\em $L^2$ piecewise constant 1-form} (or just an {\em $L^2$ form}).
\item[(ii)] We call an element of 
\[C^1_{L^2}(G,\RR) = \{\omega\in C^1(G, \RR) \colon \langle\omega,\omega\rangle<\infty\} \]
an {\em $L^2$ cochain}.
\end{itemize}
\end{Definition}
For any model $G$, we will identify $C^1_{L^2}(G,\RR)$ with its image in $\Omega^1_{L^2}(\Gamma)$. The bilinear space $\left( C^1_{L^2}(G,\RR) , \langle \cdot , \cdot \rangle \right)$ is an $L^2$ Hilbert space (with respect to the measure $\nu_\ell$ defined by $\nu_\ell(\{e\})=\ell(e)$). However, $\left(  \Omega^1_{L^2}(\Gamma)  , \langle \cdot , \cdot \rangle \right)$ is only a pre-Hilbert space.

\subsubsection{Piecewise linear functions, derivatives, and integrals}

\begin{Definition}
Let $\Gamma$ be a metric graph. A continuous function $f \colon \Gamma \rightarrow \RR$ is called \emph{piecewise linear} if there is a model $G$ for $\Gamma$ such that the restriction of $f$ to each edge of $G$ is linear. We call such a model $G$ {\em compatible} with $f$. 
The collection of all piecewise linear continuous functions on $\Gamma$ will be denoted by $\PL(\Gamma)$.
\end{Definition}

Consider a function $f \in \PL(\Gamma)$ with a compatible model $G$. The restriction of $f$ to $V(G)$ gives an element in $C^0(G,\RR)$. Its image under the coboundary map will be denoted by $df \in  C^1(G,\RR)$. 
In other words,
\[df = \sum_{e \in \Oc} (df)_e \, de \, ,\] 
where $(df)_e$ is the slope of $f$ on the oriented edge $e$:  
\[
(df)_e = 
\frac{f(e^+)-f(e^-)}{\ell(e)} \, .
\]
 By \eqref{eq:forms}, we have the canonical (injective) function $\phi_G \colon C^1(G,\RR) \rightarrow \Omega^1(\Gamma)$ sending each element to its equivalence class. In this way, we obtain a map
\begin{equation}\label{eq:d.R.Omega}
d \colon \PL(\Gamma) \rightarrow \Omega^1(\Gamma)
\end{equation}
sending $f$ to $\phi_G(df)$. We refer to $df$ as the {\em derivative} of $f$. The kernel of $d$ consists of constant functions on $\Gamma$.

Let $\gamma$ be a piecewise linear path in $\Gamma$, and let $G$ be a model such that $\gamma$ is represented by a walk $\beta$ in $G$:
\[v_0 , e_0 , v_1 , e_1 , v_2 , \ldots , v_{k-1} , e_{k-1} , v_k \, .\]

For this path $\gamma$, we have an associated $1$-cochain $\omega_\gamma = \sum_{i=0}^{k-1}{de_i}$. The equivalence class of $\omega_\gamma$ in $\Omega^1(\Gamma)$ will also be denoted by $\omega_\gamma$, and will be referred to as the {\em 1-form associated to $\gamma$}.

\begin{Definition}
Let $\gamma$ be a path on $\Gamma$ and $\omega \in \Omega^1(\Gamma)$. We define {\em the integral of $\omega$ along $\gamma$} by 
\[\int_\gamma\omega =  \langle \omega_\gamma,\omega\rangle \, .\]
\end{Definition}

We have the following metric graph analogue of (the converse of) the {\em Gradient theorem} or the {\em Poincar\'e lemma}.
\begin{Lemma}\label{lem:Gradient}
Let $\Gamma$ be a metric graph, and let $\omega \in \Omega^1(\Gamma)$. 
Assume
\begin{equation}\label{eq:intomega}
\int_\gamma\omega = 0 \, ,
\end{equation} 
for all {\em closed} paths $\gamma$. 
Then $\omega = df$ for some $f \in \PL(\Gamma)$.

More precisely, fix a point $z \in \Gamma$, and for any $x \in \Gamma$ let $\gamma_{zx}$ be a path from $z$ to $x$. Then the function $f\in \PL(\Gamma)$ given by
\begin{equation}\label{int}
f(x)=\int_{\gamma_{zx}}\omega
\end{equation}
is well-defined and 
$ \omega=df$.
\end{Lemma}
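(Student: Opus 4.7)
The plan is to verify that the prescription \eqref{int} gives a well-defined piecewise linear function and then to read off its derivative directly from a model compatible with $\omega$. First, I choose a model $G$ for $\Gamma$ that is compatible with $\omega$, so that $\omega$ is represented by $\sum_{e\in\Oc}\omega_e\,de\in C^1(G,\RR)$; after possibly refining $G$ I may assume $z\in V(G)$. Since any two points of $\Gamma$ can be joined by a piecewise linear path after a further refinement of the model (which does not change the class of $\omega$ in $\Omega^1(\Gamma)$ nor the value of any integral), the expression $\int_{\gamma_{zx}}\omega$ makes sense for every $x\in\Gamma$.

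The first step is well-definedness. Let $\gamma$ and $\gamma'$ be two paths from $z$ to $x$, represented (after a common refinement) by walks in some model. The concatenation $\gamma\cdot\bar{\gamma'}$ of $\gamma$ with the reversal of $\gamma'$ is a closed path at $z$. From the definition $\omega_\gamma=\sum_i de_i$ and the fact that reversing a walk sends each oriented edge to its opposite (whose associated cochain is $-de_i$), one gets the cochain identity $\omega_{\gamma\cdot\bar{\gamma'}}=\omega_\gamma-\omega_{\gamma'}$ in $\Omega^1(\Gamma)$. Pairing with $\omega$ and using hypothesis \eqref{eq:intomega} applied to the closed path $\gamma\cdot\bar{\gamma'}$ gives
\[
\int_\gamma\omega-\int_{\gamma'}\omega=\langle\omega_\gamma-\omega_{\gamma'},\omega\rangle=\langle\omega_{\gamma\cdot\bar{\gamma'}},\omega\rangle=\int_{\gamma\cdot\bar{\gamma'}}\omega=0,
\]
so $f(x)$ depends only on $x$.

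Next I show $f\in\PL(\Gamma)$ and compute $df$. Fix an oriented edge $e\in\Oc$ of $G$, and for a point $x$ lying on $e$ at distance $t$ from $e^-$ choose the path $\gamma_{zx}$ obtained by concatenating a fixed path from $z$ to $e^-$ with the initial segment of $e$ of length $t$. Subdividing $e$ at $x$ puts us in a refined model in which this initial segment is a single edge of length $t$; the associated cochain pairs with $\omega$ to give $\omega_e\cdot t$, so
\[
f(x)=f(e^-)+\omega_e\cdot t.
\]
Thus $f$ is affine on each edge of $G$ with slope $\omega_e$ in the direction of $e$, hence continuous and piecewise linear (with $G$ itself a compatible model), and by the definition of the derivative map \eqref{eq:d.R.Omega} we have $(df)_e=\omega_e$ for every $e\in\Oc$, so $df=\omega$ in $\Omega^1(\Gamma)$.

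There is no real obstacle here: the only subtle point is that when $\Gamma$ is non-compact the model $G$ may be infinite and the pairing $\langle\cdot,\cdot\rangle$ on $C^1(G,\RR)$ only partially defined, but every integral $\int_\gamma\omega$ involves only the finitely many edges traversed by $\gamma$, so convergence is never an issue and the argument goes through verbatim in the non-compact setting.
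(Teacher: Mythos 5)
Your proposal is correct and follows essentially the same route as the paper: well-definedness via the observation that two paths with the same endpoints differ by a closed path, and then computing $f$ along an edge by subdividing at the point $x$ to see that $f$ is affine with slope $\omega_e$, giving $(df)_e=\omega_e$. The extra remarks (the explicit cochain identity for the concatenated closed path and the note that each integral only involves finitely many edges in the non-compact case) are harmless elaborations of the same argument.
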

\begin{Remark}
Since the kernel of $d$ consists of constant functions, any other such $f$ differs from \eqref{int} by a constant function. The choice of the constant is fixed by the choice of the base point $z \in \Gamma$.
\end{Remark}

\begin{proof} 
The function $f$ in \eqref{int} is well-defined (independent of the choice of the path $\gamma_{zx}$) because any two different paths differ by a closed path. 

Next we show $\omega = df$, for $f$ as in \eqref{int}. Let $G$ be a model for $\Gamma$ compatible with $\omega$ such that $z\in V(G)$. Let $e$ be an arbitrary oriented edge of $G$. For any $x\in e$ consider the model $G_x$ whose vertex set is $V(G)\cup\{x\}$. The refinement from $G$ to $G_x$ is the decomposition of $e$ into two edges $e_1 = \{e^- , x\}$ and $e_2 = \{x , e^+\}$. By definition (see \eqref{eq:subdivide}) $\omega_{e_1}=\omega_{e_2}=\omega_e$. Let $\gamma_0$ be a path from $z$ to $e^-$. Then, a path $\gamma_{zx}$ from $z$ to $x$ can be chosen to be the concatenation of $\gamma_0$ and $e_1$. Then
\[\int_{\gamma_{zx}}\omega=\langle \omega_{\gamma_{zx}},\omega\rangle=\langle \omega_{\gamma_0}+de_1,\omega\rangle=\langle\omega_{\gamma_0},\omega\rangle+\ell(e_1) \, \omega_e \, .\]
This is a linear function of $\ell(e_1)$ with slope $\omega_e$. Hence, $(df)_{e}=\omega_e$. This proves the claim, since $e$ was an arbitrary edge.
 \end{proof}

\subsection{Harmonic forms and Laplacians}
\subsubsection{Harmonic forms}
Let $\Gamma$ be a metric graph and let $G$ be a model for $\Gamma$. Let $C^0_c(G,\RR) \subseteq C^0(G,\RR)$ be the space of $0$-cochain with compact support: for each $\phi \in C^0_c(G,\RR)$ there exists some compact $K \subseteq \Gamma$ such that  $\phi$ vanishes on all $0$-chains supported on $\Gamma \backslash K$. We have already observed that $C^0(G,\RR)$ may be identified with those functions in $\PL(\Gamma)$ that are compatible with the model $G$. The coboundary map $d \colon C^0(G,\RR)\rightarrow C^1(G,\RR)$ induces a map \[d^*\colon C^1(G,\RR)\rightarrow (C^0_c(G,\RR))^*\] defined by
\begin{equation} \label{eq:dstar}
(d^*\omega)(\phi)=\langle\omega,d\phi\rangle \, ,
\end{equation}
where $\langle\cdot,\cdot\rangle$ is the bilinear pairing \eqref{eq:coChainBilinear}, and $\phi$ is a piecewise linear function with compact support and compatible with $G$. Note that, since $\phi$ has compact support, the bilinear product $\langle\omega,d\phi\rangle$ is a well-defined. 

We may also identify an element $\beta\in (C^0_c(G,\RR))^*$ with a locally finite signed Borel measures on $\Gamma$
\[
\beta = \sum_{v\in V(G)}\beta(v) \delta_v \, ,
\] 
where $\delta_v$ denotes the Dirac measure at $v$. Clearly, $\Supp(\beta) \subseteq V(G)$, where $\Supp(\beta)$ denotes the {\em support} of the measure $\beta$. 

With this notation, we may explicitly write, for $\omega = \sum_{e \in \Oc}{\omega_e \ de} \in C^1(G,\RR)$,
\begin{equation}\label{adj}
d^* \omega=\sum_{v\in V(G)} \left(\sum_{e^+ = v} {\omega_e} - \sum_{e^- = v} {\omega_e} \right)\delta_v \, .
\end{equation}

Let $\Meas(\Gamma)$ denote the the space of all locally finite signed Borel measures on $\Gamma$. It follows from \eqref{adj} that, for a vertex $v \in V(G)$ of valence $2$, we have $(d^*\omega)(v)=0$ if and only if $\omega$ is constant around $v$. Therefore, we obtain a well-defined map
\begin{equation} \label{eq:d-star}
d^* \colon \Omega^1(\Gamma) \rightarrow \Meas(\Gamma) \, . 
\end{equation} 

For any $\omega \in \Omega^1(\Gamma)$, a model $G$ is compatible with $\omega$ if and only if $\Supp(d^*\omega) \subseteq V(G)$.

\begin{Definition}
Given a subset $A\subseteq\Gamma$, we call a form $\omega\in\Omega^1(\Gamma)$ {\em harmonic} on $A$ if 
\[ 
A \cap \Supp(d^*\omega) = \emptyset \, .
\] 
\end{Definition}\label{def:har}

We have the space of (global) harmonic forms on $\Gamma$:
\[
\mathcal{H}(\Gamma) = \{\omega \in \Omega^1(\Gamma) \colon d^*\omega = 0 \} \, .
\] 
Furthermore, the space of $L^2$ (global) harmonic forms on $\Gamma$ is:
 \[
 \mathcal{H}_{L^2}(\Gamma) = \{\omega \in \Omega^1_{L^2}(\Gamma) \colon d^*\omega = 0 \} = \mathcal{H}(\Gamma) \cap \Omega^1_{L^2}(\Gamma) \, .
 \]

The following straightforward result on harmonic forms is quite useful:

\begin{Lemma}\label{lemma:bridge}
	Let $G$ be a model for a metric graph $\Gamma$, and let $\omega\in\mathcal{H}(\Gamma)$. 
Assume $e\in E(G)$ is a bridge and (at least) one of the two connected components $\Gamma_1$ and $\Gamma_2$ of $\Gamma \backslash e$ is compact (see Figure~\ref{fig:comp}). 
Let $\omega^{(i)} = \omega|_{\Gamma_i}$ denote the restriction of $\omega$ to $\Gamma_i$ (for $i = 1,2$). 
Then 
\begin{itemize} 
\item[(a)] $\omega_e=0$.
\item[(b)] $\omega^{(1)}, \omega^{(2)} \in \mathcal{H}(\Gamma)$.
\item[(c)] The contraction map $\Gamma \rightarrow \Gamma / e$ induces an isomorphism $\mathcal{H}(\Gamma) \xrightarrow{\sim} \mathcal{H}(\Gamma / e)$. 
	\end{itemize} 
\end{Lemma}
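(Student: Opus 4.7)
The plan is to first fix a convenient model, handle part (a) with a test-function argument, deduce part (b) from (a) by a local computation, and tackle part (c) last since it requires a genuine compactness-based argument. Throughout, fix a model $G$ for $\Gamma$ compatible with $\omega$ in which $e \in E(G)$, oriented so that $e^- \in \Gamma_1$ and $e^+ \in \Gamma_2$. Since $\Gamma_1$ is compact and $G$ is locally finite, $V(G) \cap \Gamma_1$ is a finite set; this finiteness is what makes all of the telescoping arguments below valid.

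For (a), I would construct a compactly supported test function $\phi \in \PL(\Gamma)$ that equals $1$ on $\Gamma_1$, interpolates linearly from $1$ at $e^-$ to $0$ at $e^+$ along $e$, and is identically $0$ on $\Gamma_2$. Since $\Gamma_1$ is compact and $e$ has finite length, $\phi$ has compact support and is compatible with $G$. By construction $d\phi$ is supported on $e$ with $(d\phi)_e = -1/\ell(e)$, so $\langle \omega, d\phi \rangle = -\omega_e$. On the other hand, $\langle \omega, d\phi \rangle = (d^*\omega)(\phi) = 0$ by the harmonicity of $\omega$, and therefore $\omega_e = 0$. Equivalently, summing the identity $(d^*\omega)(v) = 0$ over the finite set $V(G) \cap \Gamma_1$, all edges internal to $\Gamma_1$ cancel via \eqref{adj} and only the bridge $e$ contributes, yielding $-\omega_e = 0$.

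Part (b) is now almost immediate. Since $\omega_e = 0$ by (a), we may split $\omega = \omega^{(1)} + \omega^{(2)}$ in $\Omega^1(\Gamma)$ by extending each restriction $\omega^{(i)}$ by zero. Divergence is purely local via \eqref{adj}: at a vertex $v$ of $\Gamma_i$ the formula for $d^*\omega^{(i)}(v)$ agrees with $d^*\omega(v) = 0$ (using $\omega_e = 0$ at the endpoint of $e$ lying in $\Gamma_i$, if $v$ is that endpoint), while at vertices of $\Gamma_{3-i}$ the form $\omega^{(i)}$ vanishes in a neighborhood and so $d^*\omega^{(i)}(v) = 0$. Hence $\omega^{(1)}, \omega^{(2)} \in \mathcal{H}(\Gamma)$.

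For (c), the contraction $\Gamma \to \Gamma/e$ naturally identifies forms on $\Gamma/e$ with those $\omega \in \Omega^1(\Gamma)$ satisfying $\omega_e = 0$. By (a), every harmonic form on $\Gamma$ lies in this subspace, giving a well-defined restriction map $\Phi \colon \mathcal{H}(\Gamma) \to \mathcal{H}(\Gamma/e)$: at the contracted vertex $\bar v$, the divergence of $\Phi(\omega)$ equals $d^*\omega(e^-) + d^*\omega(e^+) = 0$. The inverse is given by lifting $\bar\omega \in \mathcal{H}(\Gamma/e)$ to the form $\omega$ on $\Gamma$ with $\omega_e = 0$ agreeing with $\bar\omega$ on every other edge. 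The main obstacle, and the step I expect to be most subtle, is showing this lift is actually harmonic, because a priori the harmonicity of $\bar\omega$ only yields $d^*\omega(e^-) + d^*\omega(e^+) = d^*\bar\omega(\bar v) = 0$, not that each summand vanishes separately. This is where the compactness of $\Gamma_1$ is essential: summing $d^*\bar\omega(v) = 0$ over the finite set of vertices of $\Gamma_1$ (viewed as a subgraph of $\Gamma/e$, which includes $\bar v$) and telescoping via \eqref{adj}, all edges internal to $\Gamma_1$ cancel and only the contributions of $\Gamma_2$-side edges at $\bar v$ remain; this residual sum equals $d^*\omega(e^+)$, which is therefore zero, and then $d^*\omega(e^-) = 0$ follows from $d^*\bar\omega(\bar v) = 0$. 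Hence the lift is harmonic and $\Phi$ is the desired isomorphism.
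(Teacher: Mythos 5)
Your proof is correct and takes essentially the paper's approach: the test-function pairing in (a) is exactly the paper's argument of summing the harmonicity identity \eqref{adj} over the finitely many vertices of the compact component $\Gamma_1$ (as you yourself note), and (b) follows from (a) by the same local computation. For (c), where the paper simply says it follows from (a) and (b), you correctly identify and resolve the only nontrivial point --- that the lifted form must be harmonic at $e^-$ and $e^+$ separately --- by the same telescoping-over-$\Gamma_1$ device, so this is a faithful, slightly more detailed rendition of the intended proof.
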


\begin{proof}
  
\vspace{-2mm}
\begin{figure}[H]
\begin{tikzpicture}[scale=.75]
\draw[black, ultra thick, <-](.5,0)--(1,0);
\draw[black, ultra thick, -](0,0)--(.5,0);
\draw[dotted](-1,0) circle (1);
\node at (.5,0.4){$e$};
\draw[dotted](2,0) circle (1);
\node at (-1,0){$\Gamma_1$};
\node at (+2,0){$\Gamma_2$};
\end{tikzpicture}
\caption{$e$ is a bridge and $\Gamma_1$ or $\Gamma_2$ is compact.}\label{fig:comp}
\end{figure}
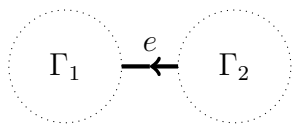 

	We assume $\Gamma_1$ is compact. Let $G_1\subseteq G$ be the finite subgraph of $G$ that is a model for $\Gamma_1$. Fix an orientation $\Oc$ for $G$ such that $e$ is directed towards $G_1$. By \eqref{adj}, we have 
	\[
	\sum_{e'\in \Oc,e'^+=v}\omega_{e'}-\sum_{e'\in \Oc,e'^-=v}\omega_{e'} = 0 \text{ , for all } v \in V(G)\,  .
	\]
	Summing over all (finitely many) vertices in $V(G_1)$ we obtain
	\[
	0=\sum_{v\in V(G_1)}\left(\sum_{e'\in \Oc,e'^+=v}\omega_{e'}-\sum_{e'\in \Oc,e'^-=v}\omega_{e'}\right)=\omega_e\, .
	\]
	This proves (a). 
	Part (b) follows from (a) and \eqref{adj}. 
	Part (c) follows from (a) and (b).
\end{proof}

\subsubsection{Hodge decomposition}

\begin{Proposition}\label{prop:df w orth}
Let $\Gamma$ be a metric graph, $G$ be any model for $\Gamma$. Then:
\begin{itemize}
\item[(a)] The space $\mathcal{H}_{L^2}(\Gamma)$ is a closed subspace of the Hilbert space $C^1_{L^2}(G,\RR)$.
\item[(b)] The orthogonal complement of $\mathcal{H}_{L^2}(\Gamma)$ in $C^1_{L^2}(G,\RR)$ is contained in 
\[{\rm Image}\left(d \colon \PL(\Gamma) \rightarrow \Omega^1(\Gamma) \right) \cap C^1_{L^2}(G,\RR) \, .
\]  
\item[(c)] The orthogonal complement of $\mathcal{H}_{L^2}(\Gamma)$  in $C^1_{L^2}(G,\RR)$ is $\cl\left(d(C^0_c(G,\RR))\right)$. 
\end{itemize}
\end{Proposition}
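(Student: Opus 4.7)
The proposition is an $L^2$ Hodge-type decomposition of $C^1_{L^2}(G,\RR)$. The plan is: (a) realise $\mathcal{H}_{L^2}(\Gamma)$ as an intersection of kernels of continuous linear functionals; (b) combine Lemma~\ref{lem:Gradient} with the observation that closed walks yield compactly supported harmonic forms; (c) conclude by standard Hilbert-space duality using (a). The main conceptual step is (b); the rest is routine Hilbert-space bookkeeping.

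For (a), for each $v\in V(G)$ formula \eqref{adj} expresses $\omega \mapsto (d^*\omega)(v)$ as a finite sum of the coordinates $\omega_e$ over edges incident to $v$ (finite, by local finiteness). Using the uniform lower bound $\epsilon>0$ on edge lengths built into the definition of a metric graph, Cauchy--Schwarz yields $|(d^*\omega)(v)|\le \sqrt{d_v/\epsilon}\,\|\omega\|_{L^2}$, so this evaluation is continuous on $C^1_{L^2}(G,\RR)$. Hence $\mathcal{H}_{L^2}(\Gamma)=\bigcap_{v\in V(G)} \ker\bigl(\omega\mapsto (d^*\omega)(v)\bigr)$ is an intersection of closed hyperplanes, and therefore closed.

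For (b), the key observation is that for every closed walk $\gamma$ the form $\omega_\gamma$ lies in $\mathcal{H}_{L^2}(\Gamma)$: it has compact support (hence is $L^2$), and at each vertex $v$, $(d^*\omega_\gamma)(v)$ computes the net flow of $\gamma$ through $v$, which vanishes because $\gamma$ is closed. Hence if $\omega$ is in the orthogonal complement of $\mathcal{H}_{L^2}(\Gamma)$ in $C^1_{L^2}(G,\RR)$, then $\int_\gamma \omega = \langle\omega_\gamma,\omega\rangle = 0$ for every closed path $\gamma$, and Lemma~\ref{lem:Gradient} produces $f \in \PL(\Gamma)$ with $\omega = df$.

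For (c), set $W = d(C^0_c(G,\RR))$, which sits inside $C^1_{L^2}(G,\RR)$ since $d\phi$ is supported on the finitely many edges meeting $\supp(\phi)$. By \eqref{eq:dstar}, $\omega \perp W$ if and only if $(d^*\omega)(\phi)=0$ for all $\phi \in C^0_c(G,\RR)$; since $d^*\omega$ is a locally finite signed measure on $V(G)$, this is equivalent to $d^*\omega = 0$. To identify this with $\mathcal{H}_{L^2}(\Gamma)$ one verifies that every harmonic form is forced to be constant across any degree-2 vertex of a common refinement, and is therefore already compatible with $G$; thus $W^\perp = \mathcal{H}_{L^2}(\Gamma)$. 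Combining with (a) and the Hilbert-space identity $V^{\perp\perp} = \cl(V)$ gives $\mathcal{H}_{L^2}(\Gamma)^\perp = \cl(d(C^0_c(G,\RR)))$.
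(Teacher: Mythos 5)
Your proof is correct and follows essentially the same route as the paper: parts (b) and (c) are the paper's argument (closed walks give compactly supported elements of $\mathcal{H}_{L^2}(\Gamma)$, so orthogonality plus Lemma~\ref{lem:Gradient} yields $\omega=df$; and $\mathcal{H}_{L^2}(\Gamma)=\left(d(C^0_c(G,\RR))\right)^\perp$ together with $(V^\perp)^\perp=\cl(V)$ gives (c)), while in (a) you merely replace ``an orthogonal complement is closed'' by the equivalent observation that $\mathcal{H}_{L^2}(\Gamma)$ is an intersection of kernels of continuous vertex functionals, and you do supply the needed compatibility of $L^2$ harmonic forms with the model $G$ (via constancy across valence-$2$ vertices), which the paper states up front. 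One minor caveat: an arbitrary model $G$ need not have edge lengths bounded below by the $\epsilon$ from the definition of a metric graph (subdivision can produce short edges), but this is harmless, since continuity of each functional $\omega\mapsto(d^*\omega)(v)$ only uses the finitely many edges incident to $v$, each of positive length.
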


\begin{proof} For any $\omega\in\mathcal{H}_{L^2}(\Gamma)$, since $\Supp(d^*\omega)=\emptyset$, we know $\omega$ must be compatible with any model $G$ for $\Gamma$, so $\mathcal{H}_{L^2}(\Gamma)\subseteq C^1_{L^2}(G,\RR) \subseteq \Omega^1_{L^2}(\Gamma)$.
It follows
\[
\begin{aligned}
\mathcal{H}_{L^2}(\Gamma)=&\{\omega\in C^1_{L^2}(G,\RR) \colon d^*\omega=0\}\\
=&\{\omega\in C^1_{L^2}(G,\RR) \colon \langle \omega,df\rangle=0, \forall f\in C^0_c(G,\RR)\}\\
=&\left(d(C^0_c(G,\RR))\right)^\perp  \, ,
\end{aligned}
\]
where $(\cdot)^\perp$ denotes the orthogonal complement. As a consequence, $\mathcal{H}_{L^2}(\Gamma)$ is closed because any orthogonal complement in a Hilbert space is closed, which proves (a). Part (c) follows from the fact that for any subspace $V$ of a Hilbert space we have $(V^\perp)^\perp=\cl(V)$.

It follows from equation \eqref{adj} that, for any {\em closed} path $\gamma$ on $\Gamma$, we have $\omega_\gamma\in\mathcal{H}(\Gamma)$. Because $\omega_\gamma$ is  supported on $\gamma$ which is a compact set, we further have $\omega_\gamma \in \mathcal{H}_{L^2}(\Gamma)$. As a consequence, if $\alpha\in C^1_{L^2}(G,\RR)$ is orthogonal to $\mathcal{H}_{L^2}(\Gamma)$ then $\alpha$ is orthogonal to all the 1-forms $\omega_\gamma$ corresponding to closed paths $\gamma$ in $\Gamma$. Part (b) now follows from Lemma \ref{lem:Gradient}. 
\end{proof}

For {\em compact} metric graphs, we have the following version of ``Hodge decomposition''.
\begin{Proposition}\label{prop:hodge}
	Let $\Gamma$ be a {\em compact} metric graph.
	\begin{itemize}
		\item[(a)] There is a direct sum decomposition $\Omega^1(\Gamma)=d \PL(\Gamma)\oplus\mathcal{H}(\Gamma)$, which is an orthogonal decomposition under the bilinear pairing on $\Omega^1(\Gamma)$.
		\item[(b)] The composition 
\begin{equation} \label{harmonic vs cohomology}
\mathcal{H}(\Gamma) \hookrightarrow C^1(G , \RR)  \twoheadrightarrow H^1(\Gamma,\RR)
\end{equation}
		is an isomorphism.
	\end{itemize}
\end{Proposition}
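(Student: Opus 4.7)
The strategy is to reduce everything to the finite-dimensional setting afforded by compactness, and then cite Proposition~\ref{prop:df w orth} directly. Since $\Gamma$ is compact, every form $\omega\in\Omega^1(\Gamma)$ is compatible with some \emph{finite} model $G$, so $\omega$ lies in $C^1(G,\RR)$, which is a finite-dimensional Euclidean space under $\langle\cdot,\cdot\rangle$. In particular $C^1(G,\RR)=C^1_{L^2}(G,\RR)$, and because harmonic forms need only be compatible with models whose vertex sets contain $\Supp(d^*\omega)=\emptyset$, we have $\mathcal{H}(\Gamma)=\mathcal{H}_{L^2}(\Gamma)$ and $\mathcal{H}(\Gamma)\subseteq C^1(G,\RR)$ for \emph{every} finite model $G$. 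Likewise, since $G$ is finite, $C^0_c(G,\RR)=C^0(G,\RR)$, and every $\phi\in C^0(G,\RR)$ extends linearly along the edges to an element of $\PL(\Gamma)$, so $d(C^0(G,\RR))\subseteq d\PL(\Gamma)\cap C^1(G,\RR)$.

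For part~(a), I would fix a finite model $G$ compatible with $\omega$ and apply Proposition~\ref{prop:df w orth}(c). The closure operation there is vacuous in finite dimensions, so the orthogonal complement of $\mathcal{H}(\Gamma)$ inside $C^1(G,\RR)$ is exactly $d(C^0(G,\RR))$. This yields an orthogonal decomposition $\omega=\omega_h+d\phi$ with $\omega_h\in\mathcal{H}(\Gamma)$ and $\phi\in C^0(G,\RR)\subseteq\PL(\Gamma)$. The sum is direct: if $df\in\mathcal{H}(\Gamma)$, then orthogonality forces $\langle df,df\rangle=0$, hence $df=0$. Varying $G$ over finite models exhibits $\Omega^1(\Gamma)=d\PL(\Gamma)\oplus\mathcal{H}(\Gamma)$ globally. (A small point to check, to close the converse inclusion, is that if $df\in C^1(G,\RR)$ via \eqref{eq:subdivide} then $f$ itself is already linear on each edge of $G$, so $d(\PL(\Gamma))\cap C^1(G,\RR)=d(C^0(G,\RR))$; this is immediate from the definition of the subdivision map, which records equal slopes on the two halves of any subdivided edge.)

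For part~(b), I would use the finite-model identification $H^1(\Gamma,\RR)\cong C^1(G,\RR)/d(C^0(G,\RR))$. Surjectivity: any cohomology class is represented by some $\omega\in C^1(G,\RR)$; by (a), $\omega=\omega_h+d\phi$ with $\omega_h\in\mathcal{H}(\Gamma)$, so the class is $[\omega_h]$. Injectivity: if $\omega_h\in\mathcal{H}(\Gamma)$ maps to $0$ in $H^1(\Gamma,\RR)$, then $\omega_h=d\phi$ for some $\phi\in C^0(G,\RR)$, and the direct-sum property of (a) forces $\omega_h=0$.

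The argument is essentially routine linear algebra once compactness has been invoked; the only genuinely substantive input is Proposition~\ref{prop:df w orth}, together with the elementary observation above that $d\PL(\Gamma)\cap C^1(G,\RR)=d(C^0(G,\RR))$ for finite models. I do not anticipate any real obstacle beyond carefully aligning the indirect characterization of $\mathcal{H}(\Gamma)^\perp$ from the noncompact setting with the finite-dimensional reduction available here.
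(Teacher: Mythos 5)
Your proposal is correct and follows essentially the same route as the paper: compactness reduces everything to a finite model where $C^1(G,\RR)=C^1_{L^2}(G,\RR)$ and $\mathcal{H}(\Gamma)=\mathcal{H}_{L^2}(\Gamma)$, Proposition~\ref{prop:df w orth}(c) (with the closure vacuous in finite dimensions) gives $(\mathcal{H}(\Gamma))^\perp=d(C^0(G,\RR))$, and part (b) follows from $H^1(\Gamma,\RR)\simeq C^1(G,\RR)/d(C^0(G,\RR))$. Your extra verification that $d\PL(\Gamma)\cap C^1(G,\RR)=d(C^0(G,\RR))$ is a harmless (and correct) refinement of the same argument.
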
 
\begin{proof}
	Because $\Gamma$ is compact, it has a finite model $G$ and $C^1_{L^2}(G,\RR)=C^1(G,\RR)$, and $\mathcal{H}_{L^2}(\Gamma)=\mathcal{H}(\Gamma)$. Because $\mathcal{H}(\Gamma)$ is a subspace of a finite dimensional vector space $C^1(G,\RR)$, it is of finite dimension and is closed and complete under the $L^2$ norm. Furthermore, all piecewise linear functions are compactly supported, hence it follows from Proposition~\ref{prop:df w orth} that the orthogonal complement of $\mathcal{H}(\Gamma)$ in $C^1(G,\RR)$ is
\begin{equation} \label{eq:HorthD}
(\mathcal{H}(\Gamma))^\perp=d(C^0(G,\RR)) \, .
\end{equation}
 
	Let $\omega \in \Omega^1(\Gamma)$ be in the orthogonal complement of $\mathcal{H}(\Gamma)$. Then $\omega \in C^1(G',\RR)$ for {\em some} model $G'$. It follows from \eqref{eq:HorthD} that $\omega \in d(C^0(G',\RR))$. Therefore $\omega \in d \PL(\Gamma)$. This proves part (a).
	
	Part (b) follows from $H^1(\Gamma, \RR) \simeq C^1(G,\RR)/d\left(C^0(G,\RR)\right)$ and \eqref{eq:HorthD}.
\end{proof}

\begin{Definition}
Let $\Gamma$ be any metric graph. The operator 
\[
\Delta=d^* \circ d \colon \PL(\Gamma) \rightarrow \Meas(\Gamma) 
\] 
(the composition of maps \eqref{eq:d.R.Omega} and \eqref{eq:d-star}) is called the {\em Laplacian operator}. 
\end{Definition}
Alternatively, $\Delta$ is the usual Laplacian operator in the sense of {\em distributions}. More concretely, the Laplacian $\Delta(f)$, for $f \in \PL(\Gamma)$, is the discrete measure 
\begin{equation} \label{eq:laplace}
\sum_{p \in \Gamma} {\sigma_p(f)\, \delta_p} \, ,
\end{equation}
where $\sigma_p(f)$ is the sum of incoming slopes of $f$ at $p$.

\begin{Definition}\label{def:har2}
We call $f\in \PL(\Gamma)$ {\em harmonic} on $A$ if $A \cap \Supp(\Delta f) = \emptyset$.
\end{Definition}
It follows from Definition \ref{def:har} and Definition \ref{def:har2} that $f\in \PL(\Gamma)$ is a harmonic function on $A$ if and only if $df \in\Omega^1(\Gamma)$ is a harmonic form on $A$.

\begin{Lemma}\label{lem:maxprin}
Let $\Gamma$ be a metric graph and $f \in \PL(\Gamma)$.
\begin{itemize}
\item[(a)] (Maximum principle) If $f$ is harmonic on some open set $U$, and is not locally constant, then $f$ cannot attain its maximum (or minimum) in the interior of $U$. 
\item[(b)] If $\Gamma$ is compact, then any $f\in \PL(\Gamma)$ which is harmonic on $\Gamma$ must be a constant. 
\item[(c)] If $f \in \PL(\Gamma)$ is harmonic on $U$ and $p \in U$ has valence $2$, then $f$ must be linear on a neighborhood of $p$.
\end{itemize}
\end{Lemma}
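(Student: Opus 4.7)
All three parts will be reduced to the explicit description of $\Delta f$ given in \eqref{eq:laplace}, namely that $\sigma_p(f)$ is the sum of incoming slopes of $f$ at $p$, together with the fact that the hypothesis $p\in U$ with $f$ harmonic on $U$ forces $\sigma_p(f)=0$.

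For part (a), I would prove the contrapositive: if $f$ attains its maximum at an interior point $p\in U$, then $f$ is constant on some neighborhood of $p$. Choose a model $G$ compatible with $f$ with $p\in V(G)$, refining if necessary so that a star-shaped neighborhood $S$ of $p$ of valence $n$ is contained in $U$ and every edge of $G$ incident to $p$ is entirely contained in $\bar{S}$. On each such edge $e_i$, directed outward from $p$, $f$ is linear with some slope $s_i$. Because $p$ is a local maximum and $f$ is continuous, $s_i\leq 0$ for every $i$, so the incoming slopes $-s_i$ are all non-negative. Harmonicity of $f$ at $p$ gives
\[
0 \;=\; \sigma_p(f) \;=\; \sum_{i=1}^{n}(-s_i),
\]
and since each summand is non-negative, every $s_i$ vanishes. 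Hence $f$ is constant on $S$. The minimum case is identical after replacing $f$ by $-f$.

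For part (b), I combine (a) with connectedness of $\Gamma$. Since $\Gamma$ is compact and $f$ is continuous, $f$ attains its maximum value $M$ at some point. Let $S=\{q\in\Gamma : f(q)=M\}$. Then $S$ is closed by continuity and non-empty by the attainment of the maximum. By part (a) applied to $U=\Gamma$, every point of $S$ is a local maximum and therefore has a neighborhood on which $f$ is constant equal to $M$, so $S$ is open as well. Since $\Gamma$ is connected, $S=\Gamma$ and $f$ is constant.

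For part (c), pick a model $G$ compatible with $f$ containing $p$ as a vertex, and let $e_1,e_2$ be the two edges of $G$ incident to $p$, oriented outward, with outgoing slopes $s_1,s_2$. Since $p\in U$ and $f$ is harmonic on $U$, $\sigma_p(f)=-s_1-s_2=0$, i.e.\ $s_1=-s_2$. This is exactly the condition that the slope of $f$ leaving $p$ along $e_1$ equals the slope arriving at $p$ from $e_2$ (and vice versa), so $f$ is linear on a neighborhood of $p$ obtained by concatenating initial segments of $e_1$ and $e_2$.

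The argument is essentially local at a single vertex; the only minor technical point, and the place I would be most careful, is the refinement step in (a) guaranteeing that a star-shaped neighborhood of $p$ sits inside a single compatible model and inside $U$. Given the definition of metric graphs (neighborhoods are star-shaped of some positive radius $\epsilon$) and the directed structure on models under refinement, this refinement is always available.
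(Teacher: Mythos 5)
Your proof is correct and follows the same route as the paper, which simply notes that (a) and (c) are immediate from the explicit formula \eqref{eq:laplace} for $\Delta f$ and that (b) follows from (a); you have filled in exactly those details — the vanishing of all outgoing slopes at a local extremum, the slope-matching at a valence-$2$ point, and the standard open-closed connectedness argument for (b). No gaps worth flagging.
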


\begin{proof}
(a) and (c) follow immediately from \eqref{eq:laplace}.
(b) follows from (a). 
\end{proof}

\subsection{Fundamental kernel of the Laplacian}
For {\em compact} metric graphs, a fundamental solution of the Laplacian is given by $j$-functions. We follow the notation of \cite{chinburg1993capacity} (see also \cite{zhang1993admissible, br, baker2007potential, Sho10, BS13}).  The main result in this section is an explicit formula in Proposition \ref{prop:jexists} for $j_z(x,y)$, which we will need later and it might also be of independent interest. 

\begin{Definition}\label{def:jfunction}
Let $\Gamma$ be a compact metric graph and fix two points $y,z \in \Gamma$. We denote by $j_z(\cdot, y)$ or $j^{\Gamma}_z(\cdot,y)$ the unique function in $\PL(\Gamma)$ satisfying:
\begin{itemize}
	\item[(i)] $\Delta_x{j_z(x,y)} = \delta_y - \delta_z$, 
	\item[(ii)] $j_z(z,y) = 0$.
\end{itemize}
\end{Definition}

\begin{Remark}\label{rmk:elect} 
The function $j_z(x, y)$ has a nice electrical network interpretation: it denotes the electric potential at $y$ if one unit of current enters the network at $x$ and exits at $z$, with $z$ ``grounded'' (i.e. has zero potential). So, the existence and uniqueness of $j_z(\cdot, y) \in \PL(\Gamma)$ are well known to electrical engineers.
\end{Remark}

For any compact metric graph $\Gamma$, the fundamental kernel $j_z(\cdot,y) \in \PL(\Gamma)$ exists and is unique.
The uniqueness of $j_z(\cdot,y)$ follows from Lemma \ref{lem:maxprin}~(b); if $j$ and $j'$ are two functions satisfying Definition~\ref{def:jfunction}, $F(\cdot)=j_z(\cdot,y)-j'_z(\cdot,y)$ is harmonic on $\Gamma$, so $F$ must be a constant. Furthermore we have $F=F(z)=j_z(z,x)-j'_z(z,x)=0$. 
The existence of $j_z(\cdot,y)$ follows from the explicit formula in Proposition~\ref{prop:jexists}.

\begin{Proposition}\label{prop:jexists}	
Let $\Gamma$ be a compact metric graph. Let 
	\[
	\pi \colon \Omega^1(\Gamma) \rightarrow \mathcal{H}(\Gamma)
	\] 
	be the orthogonal projection. Then for all $x,y,z \in \Gamma$ we have 
	\begin{equation}\label{eq:jformula}
	j_z(x,y)=\int_{\gamma_{zx}}(\omega_{\gamma_{zy}}-\pi(\omega_{\gamma_{zy}})) \, ,
	\end{equation}
	where $\gamma_{uv}$ denotes a path from $u$ to $v$.
\end{Proposition}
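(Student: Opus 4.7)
The plan is to verify that the right-hand side of \eqref{eq:jformula} defines a function in $\PL(\Gamma)$ satisfying the two conditions in Definition~\ref{def:jfunction}, from which the conclusion follows by uniqueness. Set $\omega := \omega_{\gamma_{zy}} - \pi(\omega_{\gamma_{zy}})$. Since $\pi$ is orthogonal projection onto $\mathcal{H}(\Gamma)$, the form $\omega$ lies in the orthogonal complement of $\mathcal{H}(\Gamma)$ in $\Omega^1(\Gamma)$. By the Hodge decomposition on a compact metric graph (Proposition~\ref{prop:hodge}(a)), this orthogonal complement is exactly $d\PL(\Gamma)$, so $\omega = d h$ for some $h \in \PL(\Gamma)$.

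Because $\omega$ is exact, Lemma~\ref{lem:Gradient} applies: the function
\[
F(x) := \int_{\gamma_{zx}} \omega
\]
is independent of the choice of path $\gamma_{zx}$, lies in $\PL(\Gamma)$, satisfies $F(z)=0$, and has $dF = \omega$. Property (ii) of Definition~\ref{def:jfunction} is therefore immediate. For property (i), I would compute
\[
\Delta F \;=\; d^*(dF) \;=\; d^*\omega \;=\; d^*\omega_{\gamma_{zy}} - d^*\pi(\omega_{\gamma_{zy}}) \;=\; d^*\omega_{\gamma_{zy}},
\]
where the last equality uses that $\pi(\omega_{\gamma_{zy}}) \in \mathcal{H}(\Gamma)$, so its image under $d^*$ vanishes by definition of harmonic forms.

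It remains to evaluate $d^*\omega_{\gamma_{zy}}$. Writing $\gamma_{zy}$ as a walk $z=v_0, e_0, v_1, \ldots, v_{k-1}, e_{k-1}, v_k=y$ on a suitable model, we have $\omega_{\gamma_{zy}} = \sum_{i=0}^{k-1} de_i$. Applying formula \eqref{adj}, each term $de_i$ contributes $\delta_{v_{i+1}} - \delta_{v_i}$, and the sum telescopes to $\delta_y - \delta_z$. Hence $\Delta F = \delta_y - \delta_z$, which is property (i). By the uniqueness of $j_z(\cdot,y)$ established before the statement (via Lemma~\ref{lem:maxprin}(b)), we conclude $F = j_z(\cdot,y)$, proving \eqref{eq:jformula}.

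There is no substantive obstacle here; the verification is a direct application of Hodge decomposition together with the telescoping computation of $d^*$ on the $1$-form associated to a path. The only subtlety worth flagging is that Hodge decomposition is genuinely needed on a \emph{compact} graph to guarantee the existence of a piecewise linear primitive for $\omega$, which is what makes the path-integral $F$ well-defined and piecewise linear.
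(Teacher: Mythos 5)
Your proposal is correct and follows essentially the same route as the paper: show that the path integral defines a function in $\PL(\Gamma)$ with Laplacian $\delta_y-\delta_z$ and value $0$ at $z$ (via Lemma~\ref{lem:Gradient}, harmonicity of $\pi(\omega_{\gamma_{zy}})$, and the telescoping computation of $d^*\omega_{\gamma_{zy}}$ from \eqref{adj}), then conclude by uniqueness. The only cosmetic difference is that you verify the hypothesis of Lemma~\ref{lem:Gradient} by passing through the Hodge decomposition (exactness of $\omega_{\gamma_{zy}}-\pi(\omega_{\gamma_{zy}})$), whereas the paper pairs the integrand directly against the harmonic forms $\omega_\gamma$ of closed paths; both rest on the same orthogonality to $\mathcal{H}(\Gamma)$.
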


\begin{proof}
First, we will show that \eqref{eq:jformula} is well-defined, i.e. different choices of paths $\gamma_{zx}$ and $\gamma_{zy}$ does not change the value of the integral.  
Different paths $\gamma_{zy}$ will differ by a closed path. Therefore, since $\pi$ is identity on $\mathcal{H}(\Gamma)$, the form $\omega_{\gamma_{zy}}-\pi(\omega_{\gamma_{zy}})$ is independent of the choice of $\gamma_{zy}$.
For any path $\gamma$, we have $\omega_{\gamma}-\pi(\omega_{\gamma}) \in (\mathcal{H}(\Gamma))^{\perp}$ because
\[
\pi\left(\omega_{\gamma}-\pi(\omega_{\gamma})\right) = \pi(\omega_{\gamma}) - \pi^2(\omega_{\gamma}) = \pi(\omega_{\gamma})-\pi(\omega_{\gamma}) = 0 \, .
\]
Therefore \eqref{eq:jformula} does not depend on the choice of the path $\gamma_{zx}$ either. 

Next, by Lemma \ref{lem:Gradient}, we know the derivative (with respect to $x$):
\[
d \left (\int_{\gamma_{zx}}(\omega_{\gamma_{zy}}-\pi(\omega_{\gamma_{zy}})) \right) = \omega_{\gamma_{zy}}-\pi(\omega_{\gamma_{zy}}) \,  .
\]
Therefore 
\[
\Delta \left (\int_{\gamma_{zx}}(\omega_{\gamma_{zy}}-\pi(\omega_{\gamma_{zy}})) \right) = d^*\left(\omega_{\gamma_{zy}}-\pi(\omega_{\gamma_{zy}})\right) = \delta_y - \delta_z\,  ,
\]
because $d^*\left(\omega_{\gamma_{zy}}\right) =  \delta_y - \delta_z$ by \eqref{adj}, and $d^*\left(\pi(\omega_{\gamma_{zy}})\right) = 0$ since $\pi(\omega_{\gamma_{zy}})$ is harmonic.

If $z=x$ then the constant path gives $\omega_{\gamma_{zx}} = 0$ and therefore $j_z(z,y) = 0$.
\end{proof}

\subsection{Measures and their convergence} 

\subsubsection{Piecewise Lebesgue measures}
\begin{Definition}
Let $\Gamma$ be a metric graph. We will call a real-valued Borel measure $\mu$ on $\Gamma$ {\em piecewise Lebesgue}, if 
\[\mu = fdx \, ,\] 
where $dx$ is the Lebesgue measure when restricted to any interval embedded in $\Gamma$, and $f$ is piecewise constant (i.e. locally constant on $\Gamma$ except for a discrete set). 
\end{Definition}

Let $G$ be a model for $\Gamma$ which is {\em compatible} with $\mu$, meaning $V(G)$ includes all the points in $\Gamma$ such that $f$ is not continuous. Then the piecewise Lebesgue measure $\mu$ is uniquely determined by its values $\{\mu(e)\}_{e \in E(G)}$ because
\[
\mu|_e = \frac{\mu(e)}{\ell(e)}dx \, .
\]

\subsubsection{Convergence of measures} \label{sec:convmeas}
The definition of the convergence of a sequence of measures $\{\mu_n\}$ is convergence of all integrals against a given continuous compactly supported function.
When we restrict to measures on $\Gamma$ which are compatible with a fixed model $G$, this convergence is equivalent to the convergence of the sequences $\{\mu_n(e)\}$ for all $e\in E(G)$. 

When $G$ is finite ($\Gamma$ is compact) the space of signed measures compatible with $G$ is a finite dimensional vector space (because each measure is determined by the evaluation on $E(G)$), hence the above convergence coincides with the notions of strong convergence and weak convergence.

\subsubsection{Pullback and pushdown of measures}
\label{sec:pullback}
Let $\phi:\Gamma'\rightarrow\Gamma$ be a continuous map that is simplicial for models $G'$ for $\Gamma'$ and $G$ for $\Gamma$. After possible refinement, we can assume that it send each edge in $G'$ to a vertex or an edge in $G$. 

Let $\mu$ be a piecewise Lebesgue measure on $\Gamma$ compatible with $G$. Then the {\em pullback} of $\mu$ is a measure on $\Gamma'$ compatible with $G'$ defined by $(\phi^*\mu)(e)=\mu(\phi(e))$.

If $\phi \colon \Gamma'\rightarrow\Gamma$ is a Galois covering, pullback gives a bijection between the set of measures on $\Gamma$ and the set of measures on $\Gamma'$ which are invariant under deck transformations. If $\mu'$ is a measure on $\Gamma'$ which is invariant under the deck transformations, we will call the corresponding measure $\mu$ on $\Gamma$ under this bijection the {\em pushdown} of $\mu'$.

\begin{Remark}
Our notion of pushdown is inspired by the concept of the pushdown of metrics, and is different from the notion of pushforward in measure theory; we do not record the total measure of the preimage, we only record the measure of one connected component of the preimage. So, in the case of a finite Galois cover of degree $d$, the pushforward of a measure $\mu'$ is $d$ times the pushdown of $\mu'$. 
\end{Remark}

\section{Hilbert $\G$-modules and $\G$-dimensions}\label{sec:Gdim}
\label{sec:gdim}
Here we present the von Neumann algebras and dimensions that appear in our context. See \cite{luck} and \cite{pansu1993introduction} for proofs and a more thorough treatment. The computations appearing in Example~\ref{eg:C1dim} will be used in the proof of Theorem~\ref{thm:GaussBonnet}.

By a Hilbert $\G$-module we mean a Hilbert space $\H$ together with a (left) unitary action of a discrete group $\G$. For any Hilbert space $\H$, let $\B(\H)$ denote the algebra of all bounded linear operators on $\H$. Let 
\[
\B(\H)^+ = \{A \in \B(\H) \colon \langle Ax , x \rangle \in \RR^{\geq 0} , \, \forall x \in \H \} \, .
\]

\begin{Definition}
A {\em free} Hilbert $\G$-module is a Hilbert $\G$-module which is unitarily isomorphic to $\ell^2(\G) \otimes \H$, where $\H$ is a Hilbert space with the trivial $\G$-action and the action of $\G$ on $\ell^2(\G)$ is by left translations. In other words, the representation of $\G$ on $\ell^2(\G) \otimes \H$ is given by $g \mapsto L_g\otimes I$.

Recall, for each $g \in \G$, we have the left and right translation operators $L_g , R_g \in \B(\ell^2(\G))$ defined by:
\[
L_g(f)(h) = f(g^{-1}h) \quad , \quad R_g(f)(h) = f(hg) \quad , \quad \text{for } h \in \G \, .
\]

\end{Definition}
Let $\{u_\alpha\}_{\alpha \in J}$ be an orthonormal basis for $\H$. Then we have an orthogonal decomposition
\[
\ell^2(\G) \otimes \H = \bigoplus_{\alpha \in J} \ell^2(\G)^{(\alpha)} = \{\sum_{\alpha \in J}{f_{\alpha} \otimes u_{\alpha}} \colon f_{\alpha} \in \ell^2(\G) , \,  \sum_{\alpha \in J} \|f_{\alpha}\|^2 < +\infty \} \, , 
\]
where $\ell^2(\G)^{(\alpha)} = \ell^2(\G) \otimes u_{\alpha}$ is just a copy of $\ell^2(\G)$. Moreover, an orthonormal basis for $\ell^2(\G) \otimes \H$ is $\{\delta_g \otimes u_{\alpha} \colon g \in \G , \, \alpha \in J\}$.

We are interested in the von Neumann algebra $\M_r(\G) \otimes \B(\H)$ on $\ell^2(\G) \otimes \H$, where $\M_r = \M_r(\G)$ is the von Neumann algebra generated by $\{R_g \colon g \in \G \} \subseteq \B(\ell^2(\G))$. Alternatively $\M_r(\G)$ is the algebra of $\G$-equivariant bounded operators on $\ell^2(\G)$.

\begin{Definition} \label{def:trG}
Let $\{u_\alpha\}_{\alpha \in J}$ be an orthonormal basis for $\H$.
For every $A \in (\M_r(\G) \otimes \B(\H))^+$, define 
\[
\Tr_{\G}(A)  = \sum_{\alpha \in J} \langle A(\delta_h \otimes u_{\alpha}) , \delta_h \otimes u_{\alpha} \rangle \,  ,
\]
for $h \in \G$. This is independent of the choice of $h$, so it is convenient to use $h = {\rm id}$, the group identity. It can be checked that this is a ``trace function'' in the sense of von Neumann algebras.

\end{Definition}

\begin{Definition}
A projective Hilbert $\G$-module is a Hilbert $\G$-module $\V$ which is unitarily isomorphic to a closed submodule of a free Hilbert $\G$-module, i.e. a closed $\G$-invariant subspace in {\em some} $\ell^2(\G) \otimes \H$.
\end{Definition}

Note that the embedding of $\V$ into $\ell^2(\G) \otimes \H$ is {\em not} part of the structure; only its existence is required.

For the moment, we will fix such an embedding. Let $P_{\V}$ denote the orthogonal projection from $\ell^2(\G) \otimes \H$ onto $\V$. Then $P_{\V} \in \M_r(\G) \otimes \B(\H)$ because it commutes with all $L_g \otimes I$.

\begin{Definition}\label{def:dimG}
Let $\V$ be a projective Hilbert $\G$-module. Fix an embedding into a free Hilbert $\G$-module $\ell^2(\G) \otimes \H$, and let $P_{\V} \in \M_r(\G) \otimes \B(\H)$ denote the orthogonal projection onto $\V$. 
The $\G$-dimension of $\V$ is defined as
\[
\dim_{\G}(\V) = \Tr_{\G}(P_{\V}) \, .
\]
\end{Definition}
An elementary fact is that $\dim_{\G}(\V)$ does {\em not} depend on the choice of the embedding of $\V$ into a free Hilbert $\G$-module. Therefore it is a well-defined invariant of the projective Hilbert $\G$-module $\V$. 

\begin{Remark} \label{rmk:dimGprops}
$\G$-dimension satisfies the following properties:
\begin{itemize}
\item[(i)] $\dim_{\G}(\V) = 0 \iff \V = \{0\}$.
\item[(ii)] $\dim_{\G}(\ell^2(\G)) = 1$.
\item[(iii)] $\dim_{\G}(\ell^2(\G) \otimes H) = \dim_{\CC}(H)$.
\item[(iv)] $\dim_{\G}(\V_1\oplus \V_2)=\dim_{\G}(\V_1) + \dim_{\G}(\V_2)$.
\item[(v)] $ \V_1 \subseteq \V_2 \implies \dim_{\G}(\V_1) \leq \dim_{\G}(\V_2)$. Equality holds if and only if $\V_1 = \V_2$.
\item[(vi)] If $0\rightarrow \mathrm{U} \rightarrow \V \rightarrow \mathrm{W} \rightarrow 0$ is a short {\em weakly exact sequence} of projective Hilbert $\G$-modules, then $\dim_{\G}(\V) = \dim_{\G}(\mathrm{U})+\dim_{\G}(\mathrm{W})$.
\item[(vii)] $\V$ and ${\rm W}$ are {\em weakly isomorphic}, then $\dim_{G}(\V) = \dim_{G}({\rm W})$.
\end{itemize}
A sequence of $\mathrm{U} \xrightarrow{i} \V \xrightarrow{p} \mathrm{W}$ of projective Hilbert $\G$-modules is called {\em weakly exact} at $\V$ if ${\rm Kernel}(p) =  \cl({\rm Image}(i))$. A map of projective Hilbert $\G$-modules $\V \rightarrow {\rm W}$ is a {\em weak isomorphism} if it is injective and has dense image.
\end{Remark}
The main application of this general theory to our work is related to the following example, which will be used in the proof of Theorem~\ref{thm:GaussBonnet}.
\begin{Example} \label{eg:C1dim}
 Let $\phi \colon \Gamma' \rightarrow \Gamma$ be an infinite Galois covering of a compact metric graph $\Gamma$. Fix models $G$ and $G'$ for $\Gamma$ and $\Gamma'$, compatible with the covering map $\phi$. Let $\G = \pi_1(\Gamma) / \pi_1(\Gamma')$ be the deck transformation group. We claim 
 \[
 C^1_{L^2}(G', \CC) = C^1_{L^2}(G', \RR) \otimes_{\RR} \CC
 \]
 is a projective Hilbert $\G$-module. More precisely, we have a unitary isomorphism  
 \[
 C^1_{L^2}(G', \CC) \xrightarrow{\sim} \ell^2(\G) \otimes C^1(G, \CC) .
 \]
 To see this,  let $F$ be a fundamental domain for the action of $\G$ on $G'$. Then we have a decomposition of $G' \simeq \G \times F$ given by $gx \mapsto (g, x)$ for $g \in \G$, $x \in F$.
This induces a unitary isomorphism
 \[
 U_F \colon C^1_{L^2}(G', \CC) \xrightarrow{\sim} \ell^2(\G) \otimes C^1(F, \CC) \,  ,
 \]
 where the action of $L_g$ on  $C^1_{L^2}(G', \CC)$ becomes $L_g \otimes I$ on $\ell^2(\G) \otimes C^1(F, \CC)$. 
 We also have a unitary isomorphism 
 \[
 V_F \colon C^1(F, \CC) \xrightarrow{\sim} C^1(G, \CC) \, ,
 \]
 because $\phi$ restricts to an isomorphism from $F$ to $G$.
 
 It follows
 \[
 \dim_{\G}( C^1_{L^2}(G', \CC)) = \dim_{\CC}(C^1(G, \CC)) = |E(G)|\, .
 \]

 Similarly, one may consider $C^0_{L^2}(G', \CC)$, the $L^2$ space of functions on $V(G')$ (endowed with the counting measure). Then we also have a unitary isomorphism 
 \[
 C^0_{L^2}(G', \CC) \xrightarrow{\sim} \ell^2(\G) \otimes C^0(G, \CC) \, .
 \]
Consequently, 
 \[
 \dim_{\G}( C^0_{L^2}(G', \CC)) = \dim_{\CC}(C^0(G, \CC)) = |V(G)| \, .
 \]
 
 We note that our base-change from $\RR$ to $\CC$ is purely for convenience; the existing literature on von Neumann algebras is usually written over $\CC$.
\end{Example}

\section{Canonical measures on metric graphs}
\label{sec:canon}
\subsection{Canonical measures on compact metric graphs}
We will briefly review the notion of canonical measures on compact metric graphs. This notion was introduced in \cite{zhang1993admissible} (see also \cite{chinburg1993capacity}). In \cite{baker2011metric}, Baker and Faber give an interpretation in the framework of tropical geometry.

\begin{Definition}\label{def:zhang}
Let $\Gamma$ be a compact metric graph and $G$ be a model for $\Gamma$ with {\em no loops}. We let 
\begin{equation}\label{eq:eff_resis}
\Rsc(e)=
\begin{cases}
j^{\Gamma\backslash e}_{e^-}(e^+,e^+) & \text{, if } e \text{ is not a bridge,}\\
+\infty & \text{, if } e\text{ is a bridge.}
\end{cases}
\end{equation}
The canonical measure on $\Gamma$ is the piecewise Lebesgue measure determined by
\begin{equation}\label{eq:zhang}
\mu_{\can}|_e=\frac{1}{\Rsc(e)+\ell(e)} \, dx \, .
\end{equation}
\end{Definition}

The following result is well-known.
\begin{Proposition}\label{prop:proj}
	Let $\Gamma$ be a compact metric graph, $G$ be a model for $\Gamma$ without loops and with a fixed orientation $\Oc$, and let 
	$\pi \colon \Omega^1(\Gamma) \rightarrow \mathcal{H}(\Gamma)$
	be the orthogonal projection. Then for any $e \in \Oc$ we have 
	\[\mu_{\can}(e)=(\pi(de))_e = \frac{1}{\ell(e)} \langle \pi(de) , de \rangle\, .\]
\end{Proposition}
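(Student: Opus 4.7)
\textbf{Proof plan for Proposition~\ref{prop:proj}.}
The second equality is immediate from definitions: writing $de$ in the basis $\{de'\}_{e'\in\Oc}$ as $de=\sum_{e'}\delta_{ee'}\,de'$ and applying the bilinear form \eqref{eq:coChainBilinear} gives $\langle \pi(de),de\rangle = (\pi(de))_e\,\ell(e)$. So the content is in proving $(\pi(de))_e = \mu_{\can}(e) = \ell(e)/(\Rsc(e)+\ell(e))$.

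I would split into two cases. If $e$ is a bridge, Lemma~\ref{lemma:bridge}(a) applied to $\pi(de)\in\mathcal{H}(\Gamma)$ forces $(\pi(de))_e=0$, which matches $\mu_{\can}(e)=\ell(e)/(\infty+\ell(e))=0$. Now assume $e$ is not a bridge. By the Hodge decomposition (Proposition~\ref{prop:hodge}(a)), write
\[
de = \pi(de) + df \quad \text{for some } f\in \PL(\Gamma).
\]
Applying $d^*$ and using $d^*\pi(de)=0$ together with \eqref{adj} gives $\Delta f = d^*(de) = \delta_{e^+}-\delta_{e^-}$. So $f$ satisfies the same Poisson equation as the fundamental kernel $j_{e^-}(\cdot,e^+)$ on $\Gamma$, and by Lemma~\ref{lem:maxprin}(b) (harmonicity of the difference), $f$ and $j_{e^-}(\cdot,e^+)$ differ by a constant. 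In particular $df = d j_{e^-}(\cdot,e^+)$.

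The next step is to compute $(df)_e$. Since $e$ is a single edge with endpoints $e^-$ and $e^+$, the function $j_{e^-}(\cdot,e^+)$ is linear along $e$ (its Laplacian is supported at $\{e^-,e^+\}$, hence it is harmonic on the interior of $e$, and by Lemma~\ref{lem:maxprin}(c) the slope is constant there). Therefore
\[
(df)_e = \frac{j_{e^-}(e^+,e^+)-j_{e^-}(e^-,e^+)}{\ell(e)} = \frac{R_\Gamma(e^-,e^+)}{\ell(e)},
\]
where $R_\Gamma(e^-,e^+) := j^{\Gamma}_{e^-}(e^+,e^+)$ is the effective resistance between the endpoints of $e$ in the whole network $\Gamma$ (cf.~Remark~\ref{rmk:elect}).

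Finally I would invoke the parallel law for resistors. The network $\Gamma$ between $e^-$ and $e^+$ is the parallel combination of the edge $e$ (a single resistor of resistance $\ell(e)$) with the rest of the network $\Gamma\setminus e$ (whose two-point resistance between $e^-$ and $e^+$ is exactly $\Rsc(e)$ by \eqref{eq:eff_resis}). Thus
\[
\frac{1}{R_\Gamma(e^-,e^+)} = \frac{1}{\ell(e)} + \frac{1}{\Rsc(e)}, \qquad\text{i.e.}\qquad R_\Gamma(e^-,e^+) = \frac{\ell(e)\,\Rsc(e)}{\ell(e)+\Rsc(e)}.
\]
Plugging back,
\[
(\pi(de))_e = (de)_e - (df)_e = 1 - \frac{\Rsc(e)}{\ell(e)+\Rsc(e)} = \frac{\ell(e)}{\ell(e)+\Rsc(e)} = \mu_{\can}(e),
\]
as desired. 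The main (mild) obstacle is justifying the parallel-law identity from the $j$-function definition; this either follows from a direct argument using superposition of currents, or by noting that $j^{\Gamma\setminus e}_{e^-}(\cdot,e^+)$ and the restriction of $j^\Gamma_{e^-}(\cdot,e^+)$ to $\Gamma\setminus e$ solve related Poisson problems whose comparison yields the parallel formula.
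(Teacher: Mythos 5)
Your proof is correct, but it follows a genuinely different route from the paper: the paper does not argue directly at all, it deduces Proposition~\ref{prop:proj} from Kirchhoff's classical formula for the orthogonal projection onto the cycle space in terms of spanning trees (citing Kirchhoff, Biggs, and Baker--Faber), whereas you give a self-contained potential-theoretic proof using only tools from \S\ref{sec:forms}: the Hodge decomposition (Proposition~\ref{prop:hodge}) to write $de=\pi(de)+df$ with $\Delta f=\delta_{e^+}-\delta_{e^-}$, the identification of $f$ with $j_{e^-}(\cdot,e^+)$ up to a constant via Lemma~\ref{lem:maxprin}(b), Lemma~\ref{lemma:bridge}(a) for the bridge case, and finally the parallel law to match Zhang's formula \eqref{eq:zhang}. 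The one step you leave as a sketch, the parallel law, does close exactly along the lines you indicate: with $V=j^{\Gamma}_{e^-}(e^+,e^+)$, restricting $j^{\Gamma}_{e^-}(\cdot,e^+)$ to $\Gamma\backslash e$ and computing with \eqref{adj} shows its Laplacian there is $\bigl(1-V/\ell(e)\bigr)(\delta_{e^+}-\delta_{e^-})$, so by uniqueness of the fundamental kernel it equals $\bigl(1-V/\ell(e)\bigr)\,j^{\Gamma\backslash e}_{e^-}(\cdot,e^+)$; evaluating at $e^+$ gives $V=\ell(e)\Rsc(e)/(\ell(e)+\Rsc(e))$, which is precisely what your last computation needs. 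Comparing the two: the paper's citation is shorter and carries the extra combinatorial content (the spanning-tree expression for the projection matrix, hence for $\mu_{\can}(e)$), while your argument is elementary, stays entirely within the paper's framework of $j$-functions and harmonic forms, and makes explicit the link between the diagonal projection coefficient $(\pi(de))_e$ and effective resistances. One small point worth stating explicitly in your write-up: the no-loop hypothesis is used when you write $d^*(de)=\delta_{e^+}-\delta_{e^-}$ and invoke $j_{e^-}(\cdot,e^+)$, since both require $e^-\neq e^+$.
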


This follows from Kirchhoff's celebrated formula for the orthogonal projection matrix in terms of spanning trees \cite{Kirchhoff, Biggs}. See also \cite{baker2011metric}.

\subsection{Canonical measures on metric graphs}
\label{sec:CanGen}
Guided by Proposition~\ref{prop:proj}, we define the {\em canonical measure} on a (possibly non-compact) metric graph as follow:
\begin{Definition}\label{def:2}
	Let $\Gamma$ be a (possibly non-compact) metric graph. Let $G$ be a model with a fixed orientation $\Oc$, and let
	 \[\pi \colon \Omega^1_{L^2}(\Gamma) \rightarrow \mathcal{H}_{L^2}(\Gamma)\] 
	denote the orthogonal projection map. 
	The (generalized) canonical measure $\mu_{\can}$ (or $\mu^{\Gamma}_{\can}$ if we need to clarify the underlying graph) is the unique measure characterized by  
	\[\mu_{\can}(e) \coloneqq (\pi(de))_e = \frac{1}{\ell(e)} \langle \pi(de) , de \rangle \]
	for all $e \in \Oc$.
	
\end{Definition} 

It is easily seen that this measure does not depend on the choice of the orientation or the model $G$. Moreover, this measure is compatible with any model $G$, and is invariant under isometries of $\Gamma$.

An alternate characterization of canonical measures, which closely mirrors McMullen's definition of {\em Bergman metrics} for Riemann surfaces in \cite[(A.3)]{mcmullen2013entropy}, is the following:

\begin{Proposition} \label{prop:mcmul}
	Let $\Gamma$ be a metric graph with a model $G$ and an orientation $\Oc$. Let $e \in \Oc$. Let $\|\omega\| = \sqrt{\langle \omega , \omega \rangle}$ denotes the usual $L^2$ norm on forms.
	\begin{itemize} 
	\item[(a)] If $\mathcal{H}_{L^2}(\Gamma) \ne \{0\}$ we have
	\begin{equation}\label{eq:mc2}
	\ell(e)\mu_{can}(e)=\sup_{0 \ne \omega\in \mathcal{H}_{L^2}(\Gamma)} \frac{|\omega(e)|^2}{\|\omega\|^2}=\max_{0 \ne \omega\in \mathcal{H}_{L^2}(\Gamma)} \frac{|\omega(e)|^2}{\|\omega\|^2} \, .
\end{equation}
	\item[(b)] We always have
	\begin{equation}\label{eq:mc1}
	\ell(e)\mu_{can}(e)=\sup_{\omega\in \mathcal{H}_{L^2}(\Gamma), \|\omega\|\leq 1}|\omega(e)|^2=\max_{\omega\in \mathcal{H}_{L^2}(\Gamma), \|\omega\|\leq 1}|\omega(e)|^2 \, .
	\end{equation}
	\end{itemize}
	
\end{Proposition}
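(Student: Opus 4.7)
The strategy is to reduce everything to a single Cauchy--Schwarz computation based on the identity $\omega(e)=\langle \omega, de\rangle$, the key point being that, for harmonic $\omega$, one may replace $de$ by its harmonic projection $\pi(de)$.

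\textbf{Step 1 (the key identity).} First I would observe that for any $\omega=\sum_{e'\in\Oc}\omega_{e'}\,de'\in C^1(G,\RR)$, the definition of the bilinear pairing in \eqref{eq:coChainBilinear} gives $\langle \omega, de\rangle = \omega_e\,\ell(e) = \omega(e)$. If moreover $\omega\in\mathcal{H}_{L^2}(\Gamma)$, then since $de-\pi(de)\in \mathcal{H}_{L^2}(\Gamma)^{\perp}$ by definition of the orthogonal projection, I obtain
\[
\omega(e)=\langle \omega, de\rangle = \langle \omega,\pi(de)\rangle.
\]
This is the identity that drives the whole proof.

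\textbf{Step 2 (norm of $\pi(de)$).} Next, using that $\pi$ is an orthogonal projection (so $\pi^{*}\pi=\pi^{2}=\pi$), I compute
\[
\|\pi(de)\|^{2} = \langle \pi(de),\pi(de)\rangle = \langle \pi(de),de\rangle = \ell(e)\,\mu_{\can}(e),
\]
where the last equality is exactly Definition~\ref{def:2}.

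\textbf{Step 3 (Cauchy--Schwarz).} Combining Steps 1 and 2, for every $\omega\in\mathcal{H}_{L^2}(\Gamma)$ with $\omega\ne 0$,
\[
|\omega(e)|^{2} = |\langle \omega,\pi(de)\rangle|^{2} \leq \|\omega\|^{2}\,\|\pi(de)\|^{2} = \|\omega\|^{2}\,\ell(e)\,\mu_{\can}(e),
\]
which gives the upper bounds in both \eqref{eq:mc2} and \eqref{eq:mc1}.

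\textbf{Step 4 (attainment).} To upgrade the supremum to a maximum, I would take $\omega_{0}=\pi(de)\in\mathcal{H}_{L^2}(\Gamma)$. If $\pi(de)\ne 0$, then $\omega_{0}(e)=\langle\pi(de),de\rangle = \ell(e)\,\mu_{\can}(e)$ and $\|\omega_{0}\|^{2}=\ell(e)\,\mu_{\can}(e)$, so $|\omega_{0}(e)|^{2}/\|\omega_{0}\|^{2}=\ell(e)\,\mu_{\can}(e)$, realizing equality in Cauchy--Schwarz; for \eqref{eq:mc1} one rescales to $\omega_{0}/\|\omega_{0}\|$. If instead $\pi(de)=0$, then $\mu_{\can}(e)=0$ by Step 2, and by Step 1 every $\omega\in\mathcal{H}_{L^2}(\Gamma)$ satisfies $\omega(e)=\langle \omega,\pi(de)\rangle = 0$; the supremum is then $0$, attained trivially (by any unit $\omega$ in case (b), and by any nonzero $\omega$ in case (a), which exists under the hypothesis $\mathcal{H}_{L^2}(\Gamma)\ne\{0\}$). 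In case (b), if in addition $\mathcal{H}_{L^2}(\Gamma)=\{0\}$, both sides are $0$.

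I do not expect any serious obstacle here: the proof is essentially one line of Cauchy--Schwarz. The only subtlety is the bookkeeping in Step 4 to ensure that the bound is actually attained (and not merely approached), which is resolved by exhibiting $\pi(de)$ itself as the extremizer and separating off the degenerate case $\pi(de)=0$.
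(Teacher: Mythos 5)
Your proof is correct and follows essentially the same route as the paper: the identity $\omega(e)=\langle\omega,\pi(de)\rangle$ for harmonic $\omega$, Cauchy--Schwarz, and attainment of the bound by $\pi(de)$ itself, with $\|\pi(de)\|^2=\langle\pi(de),de\rangle=\ell(e)\mu_{\can}(e)$. Your explicit handling of the degenerate case $\pi(de)=0$ is a small bookkeeping addition the paper leaves implicit, but it changes nothing of substance.
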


\begin{proof}
We will assume $\mathcal{H}_{L^2}(\Gamma) \ne \{0\}$; otherwise the statement in \eqref{eq:mc1} boils down to $0=0$ and there is nothing to prove.
We will first prove part (a).
Let $\pi \colon \Omega^1_{L^2}(\Gamma) \rightarrow \mathcal{H}_{L^2}(\Gamma)$ be the orthogonal projection as in Definition~\ref{def:2}. Then $de-\pi(de)$ is orthogonal to $\mathcal{H}_{L^2}(\Gamma)$ because $\pi^2 = {\rm id}$. For any $\omega\in \mathcal{H}_{L^2}(\Gamma)$ we have
\[\omega(e)=\langle\omega,de\rangle=\langle\omega,de-\pi(de)\rangle+\langle\omega,\pi(de)\rangle=\langle\omega,\pi(de)\rangle \, .
\]
So, by the Cauchy--Schwarz inequality, 
\begin{equation} \label{eq:cs}
 |\omega(e)| = |\langle\omega,\pi(de)\rangle | 
 \leq  \|\omega\| \|\pi(de)\| \, ,
\end{equation}
and the equality is achieved if and only if $\omega$ and $\pi(de)$ are linearly dependent. Therefore
\[
\sup_{0 \ne \omega\in \mathcal{H}_{L^2}(\Gamma)} \frac{|\omega(e)|^2}{\|\omega\|^2} = \|\pi(de)\|^2=\langle \pi(de),\pi(de)\rangle=\langle\pi(de),de\rangle=\ell(e)(\pi(de))_e=\ell(e)\mu_{\can}(e) \,.\]
This proves \eqref{eq:mc2}. To obtain \eqref{eq:mc1}, first note that, by \eqref{eq:cs}, we have
\[
\sup_{\omega\in \mathcal{H}_{L^2}(\Gamma), \|\omega\|\leq 1}|\omega(e)|^2 \leq \|\pi(de)\|^2 \, .
\]
But,  by \eqref{eq:mc2}, the equality is attained for a form of norm $1$.
\end{proof}

\begin{Remark} \label{rem:admissiblemeas}
Our results generalize immediately to ``augmented metric graphs'' (in the sense of \cite{ABBR1}), and  ``admissible measures'' (in the sense of \cite{zhang1993admissible}). There is an easy trick: if a point $p$ has genus $g(p)$, one can build a usual metric graph, where there are $g(p)$ loops (of arbitrary lengths) attached to the point $p$. Then, the total genus multiplied by the admissible measure is equal to the (measure-theoretic) pushforward of the canonical measure (in our sense) of the new metric graph under the map collapsing those loops. 
\end{Remark}

\subsection{Properties of the (generalized) canonical measure}
\subsubsection{Total mass}
We begin with the following well-known result, which is essentially due to Foster \cite{Foster} (see also \cite{Flanders, zhang1993admissible, baker2011metric}). \begin{Proposition}\label{prop:trace}
	Let $\Gamma$ be a compact graph, $G$ be a model for $\Gamma$. Then 
	\[
	\sum_{e\in E(G)} \mu_{\can}(e)=\dim_\RR H^1(\Gamma, \RR) \,.
	\]
	In other words, $\mu_{\can}(\Gamma)= g(\Gamma)$.

\end{Proposition}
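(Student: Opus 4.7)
The plan is to interpret $\sum_e \mu_{\can}(e)$ as the trace of the orthogonal projection onto $\mathcal{H}(\Gamma)$, and then identify that trace with $\dim \mathcal{H}(\Gamma) = \dim H^1(\Gamma, \RR)$ via the Hodge decomposition of Proposition~\ref{prop:hodge}.

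First, I would fix an orientation $\Oc$ on $G$ and observe that, by the definition of the bilinear form on $C^1(G,\RR)$ in \eqref{eq:coChainBilinear}, the family $\bigl\{\tfrac{1}{\sqrt{\ell(e)}}\,de\bigr\}_{e \in \Oc}$ is an orthonormal basis of $C^1(G,\RR)$. Since $\Gamma$ is compact, $G$ is finite, so $C^1(G,\RR)=C^1_{L^2}(G,\RR)$ is a finite-dimensional Hilbert space, and $\mathcal{H}(\Gamma)=\mathcal{H}_{L^2}(\Gamma)\subseteq C^1(G,\RR)$ is a closed subspace. Let $\pi\colon C^1(G,\RR)\to\mathcal{H}(\Gamma)$ denote the orthogonal projection; note that this is the same as the projection appearing in Definition~\ref{def:2}, because $\mathcal{H}(\Gamma)$ is contained in $C^1(G,\RR)$ and the bilinear form on $\Omega^1(\Gamma)$ restricts to the one on $C^1(G,\RR)$.

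Next, I would compute $\Tr(\pi)$ in this orthonormal basis:
\[
\Tr(\pi) \;=\; \sum_{e \in \Oc} \Bigl\langle \pi\!\bigl(\tfrac{1}{\sqrt{\ell(e)}}\,de\bigr),\, \tfrac{1}{\sqrt{\ell(e)}}\,de \Bigr\rangle \;=\; \sum_{e \in \Oc} \frac{1}{\ell(e)} \,\langle \pi(de),\, de \rangle \;=\; \sum_{e \in E(G)} \mu_{\can}(e),
\]
where the last equality uses Definition~\ref{def:2} and the fact that $\Oc$ contains exactly one representative of each unoriented edge.

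On the other hand, $\pi$ is an orthogonal projection onto a finite-dimensional subspace, so in any orthonormal basis its trace equals the dimension of its image:
\[
\Tr(\pi) \;=\; \dim_\RR \mathcal{H}(\Gamma).
\]
By Proposition~\ref{prop:hodge}(b), the map $\mathcal{H}(\Gamma) \hookrightarrow C^1(G,\RR) \twoheadrightarrow H^1(\Gamma,\RR)$ is an isomorphism, so $\dim_\RR \mathcal{H}(\Gamma) = \dim_\RR H^1(\Gamma,\RR) = g(\Gamma)$. Combining these two computations of $\Tr(\pi)$ yields the claim.

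There is no real obstacle here; the only subtlety is verifying that the orthogonal projection $\pi$ onto $\mathcal{H}_{L^2}(\Gamma)$ from Definition~\ref{def:2}, viewed inside $\Omega^1_{L^2}(\Gamma)$, agrees with the orthogonal projection onto $\mathcal{H}(\Gamma)$ inside the finite-dimensional $C^1(G,\RR)$. This is immediate once one notes that $\mathcal{H}(\Gamma)\subseteq C^1(G,\RR)$ for any model $G$ of a compact graph (since the support of $d^*\omega$ is empty, any model is compatible with $\omega$) and that the bilinear forms are compatible under the inclusion $C^1(G,\RR)\hookrightarrow\Omega^1(\Gamma)$.
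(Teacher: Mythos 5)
Your proof is correct, and it is essentially the paper's own argument: the paper justifies Proposition~\ref{prop:trace} by citing Foster's theorem and remarking that both sides compute the trace of the orthogonal projection from $C^1(G,\RR)$ onto the harmonic (equivalently, cohomology) subspace, which is exactly the computation you carry out via the orthonormal basis $\{de/\sqrt{\ell(e)}\}$ and Proposition~\ref{prop:hodge}(b). Your extra care in checking that the projection of Definition~\ref{def:2} restricts to the finite-dimensional orthogonal projection on $C^1(G,\RR)$ is a sound filling-in of a detail the paper leaves implicit.
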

The result can be thought of as a ``trace formula'', as both sides of the equality compute the trace of the projection matrix from $C^1(G,\RR)$ onto $H^1(G,\RR)$.

\subsubsection{Canonical measures and contractions}

Here we study the behavior of the canonical measure under contractions. 

Let $A \subseteq \Gamma$ be a subgraph. Then $\Gamma / A$ is the quotient metric graph whose equivalence classes are $A$ and all one point subsets $\{x\}$ for $x \not\in A$. Geometrically, one is contracting (collapsing) $A$ to a single point $p_A$.
The natural contraction map $c \colon \Gamma \rightarrow \Gamma/A$ is defined by $c(x) = x$ for $x \not\in A$, and $c(x) = p_A$ for $x \in A$.

We may choose a model $G$ with orientation $\Oc$ for $\Gamma$ such that $G$ contains a subgraph $H$ which is a model for $A$. Then $\Gamma / A$ has a model whose edge set is $E(G) \backslash E(H)$ and whose vertex set is $(V(G) \backslash V(H)) \cup \{p_A\}$. 
The contraction map $c \colon \Gamma\rightarrow\Gamma/A$ induces a map:
\[
c_* \colon  C^1_{L^2}(G,\RR)\rightarrow C^1_{L^2}(G/H,\RR)
\] 
define by $(c_*(\omega))_{e}=\omega_{e}$ for all $e$ not in $H$. 
Clearly $c_*$ contracts the $L^2$ norm. It is easy to check (using \eqref{adj}),
\[c_*(\mathcal{H}_{L^2}(\Gamma)) \subseteq \mathcal{H}_{L^2}(\Gamma/A) \,.\]

The contraction map $c \colon \Gamma\rightarrow\Gamma/A$ also induces a map:
\[
c^* \colon  C^1_{L^2}(G/H,\RR)\rightarrow C^1_{L^2}(G,\RR)
\] 
define by $(c^*(\omega))_{e}=\omega_{e}$ if $e$ is not in $H$, and $(c^*(\omega))_{e}=0$ if $e$ is in $H$. This map preserves the $L^2$ inner product, but it does not necessarily send harmonic forms to harmonic forms.

The following result is essentially equivalent to {\em Rayleigh's monotonicity law} in electrical networks (see \cite[Chapter 4]{DoyleSnell} or \cite[Chapter 2]{LP:book}, and \eqref{eq:zhang}):

\begin{Proposition}\label{lem3}
	Let $\Gamma$ be a metric graph, $e \subseteq \Gamma$ be any line segment. Let $A \subseteq \Gamma \backslash e$ be a compact subgraph. 
	Then
	\[\mu^{\Gamma}_{\can}(e) \leq \mu^{\Gamma / A}_{\can} (e) \, .\]
\end{Proposition}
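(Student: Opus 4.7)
The plan is to deduce the inequality directly from the variational characterization of the canonical measure given in Proposition~\ref{prop:mcmul}(b), combined with the push-forward map $c_*$ introduced in the paragraph preceding the statement. I would pick models $G$ for $\Gamma$ and $G/H$ for $\Gamma/A$ so that the edge $e$ is identified in both models (with the same length $\ell(e)$); then
\[
\ell(e)\,\mu^{\Gamma}_{\can}(e) = \sup_{\omega \in \mathcal{H}_{L^2}(\Gamma),\ \|\omega\|\leq 1} |\omega(e)|^2,
\]
and similarly for $\Gamma/A$, so it suffices to compare the two suprema.

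First I would verify three properties of $c_* \colon C^1_{L^2}(G,\RR) \to C^1_{L^2}(G/H,\RR)$, all of which follow immediately from its definition: (i) $c_*$ preserves evaluation on $e$, namely $(c_*\omega)(e) = \omega(e)$, since $e \notin H$; (ii) $c_*$ is norm-decreasing, $\|c_*\omega\| \leq \|\omega\|$, because it simply drops the coordinates indexed by edges of $H$; (iii) $c_*$ carries $\mathcal{H}_{L^2}(\Gamma)$ into $\mathcal{H}_{L^2}(\Gamma/A)$. Property (iii) is the only one with any content; it is asserted in the text and follows by summing the divergence relation \eqref{adj} for $\omega$ over the (finitely many) vertices of $H$, so that the internal edges cancel pairwise and one recovers the divergence of $c_*\omega$ at the collapsed vertex $p_A$, which vanishes because $\omega$ was already harmonic on $\Gamma$.

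Given these three points, the proof is essentially bookkeeping: for every $\omega \in \mathcal{H}_{L^2}(\Gamma)$ with $\|\omega\|\leq 1$, the image $c_*\omega$ is an admissible test form in the variational formula for $\mu^{\Gamma/A}_{\can}(e)$ with the same value $|\omega(e)|^2$. Taking suprema and dividing by $\ell(e)$ yields $\mu^{\Gamma}_{\can}(e) \leq \mu^{\Gamma/A}_{\can}(e)$. (The edge case $\mathcal{H}_{L^2}(\Gamma) = \{0\}$ forces the left side to be $0$ and is trivial.)

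The main obstacle, such as it is, is property (iii), and this is precisely where compactness of $A$ enters: it ensures $V(H)$ is finite, so that the summation of the vertex-wise divergence equations over $V(H)$ is legitimate. Conceptually, the inequality says that restricting the class of test forms to those harmonic on the larger space $\Gamma$ (rather than merely on $\Gamma/A$) can only shrink the supremum; this is the variational shadow of Rayleigh's monotonicity law, in which shorting out $A$ decreases effective resistance across $e$ and hence, via \eqref{eq:zhang}, increases the canonical mass on $e$.
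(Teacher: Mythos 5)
Your argument is correct. All three properties of $c_*$ that you isolate do hold: $(c_*\omega)(e)=\omega(e)$ because $e\notin H$ keeps its coefficient and its length under contraction; dropping the coordinates of $E(H)$ can only decrease the $L^2$ norm; and harmonicity is preserved, with the divergence of $c_*\omega$ at the collapsed point $p_A$ obtained by summing \eqref{adj} over the finitely many vertices of $H$ (edges with both endpoints in $V(H)$ cancel, or become loops at $p_A$ which contribute zero), which is exactly where compactness of $A$ is needed. Feeding unit-ball elements of $\mathcal{H}_{L^2}(\Gamma)$ into the variational formula \eqref{eq:mc1} for $\Gamma/A$ then gives the inequality, and your handling of the degenerate case $\mathcal{H}_{L^2}(\Gamma)=\{0\}$ via part (b) of Proposition~\ref{prop:mcmul} is fine (if the segment $e$ contains points of valence $\geq 3$ one subdivides and sums, a harmless refinement). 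Note, however, that the paper does not prove this proposition at all: it simply observes that the statement is essentially Rayleigh's monotonicity law and points to the electrical-network literature together with \eqref{eq:zhang}. So your route is genuinely different in character: it is the Dirichlet/Thomson variational proof of Rayleigh monotonicity rewritten in the paper's own language of $L^2$ harmonic forms and the contraction map $c_*$. What this buys is a self-contained argument that applies verbatim to non-compact $\Gamma$ (where \eqref{eq:zhang} is not available and the translation to effective resistances is less immediate), at the cost of redoing a classical fact; the paper's citation is quicker but leaves that translation to the reader.
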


\subsubsection{Approximation theorem}
The following result shows that the canonical measure on a non-compact graph can be approximated by canonical measures on any {\em exhaustion} by compact metric subgraphs. It follows immediately from \cite[Proposition 9.2]{LP:book}. See also \cite[\S7]{LyonsR}.

\begin{Proposition}\label{approx}
	Let $\Gamma$ be a non-compact metric graph, and  let $G$ be a model for $\Gamma$.
	\begin{itemize}
		\item[(i)] Let $\{A_i\}$ be an increasing sequence of compact metric subgraphs, compatible with $G$, such that $\Gamma = \bigcup A_i$. 
		\item[(ii)] Let $A'_i = \Gamma / \cl(\Gamma \backslash A_i)$.		
		\item[(iii)] Let $c_i \colon \Gamma \rightarrow A'_i$ be the contraction map.
		\item[(iv)] Let $\pi_i \colon \Omega^1(A'_i) \rightarrow \mathcal{H}(A'_i)$ and $\pi \colon \Omega^1_{L^2}(\Gamma) \rightarrow \mathcal{H}_{L^2}(\Gamma)$ be the orthogonal projections.
	\end{itemize}
	Then 
	\begin{itemize}
	\item[(a)] For any oriented edge $e$ of $G$, we have	
	\[
	\lim_{i\rightarrow\infty}c_i^*(\pi_i(de))=\pi(de) \,,
	\] 
	as elements in $C^1_{L^2}(G,\RR)$ with $L^2$ convergence. Note that, for any edge $e$, the projection $\pi_i(de)$ is well-defined for sufficiently large $i$.
	\item[(b)] 
 We have the convergence of measures (in the sense of \S\ref{sec:convmeas})
	\[
	\lim_{i\rightarrow\infty} {c_i^*( \mu_{\can}^{A'_i})} = \mu_{\can}^{\Gamma} 
	\] 
	on $\Gamma$. Here $c_i^*(\cdot)$ denotes pullback of measures (in the sense of \S\ref{sec:pullback}).
	\end{itemize}
	\end{Proposition}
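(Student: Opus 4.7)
The strategy is to re-interpret both $\pi_i(de)$ and $\pi(de)$ as orthogonal projections onto ``gradient'' subspaces of a common Hilbert space, and then to invoke the standard fact that orthogonal projections onto an increasing sequence of closed subspaces converge strongly to the projection onto the closure of their union. The bulk of the work is in Part~(a); Part~(b) then follows formally.

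For Part~(a), I first rephrase the projections using Hodge-type decompositions. Since $A'_i$ is compact, Proposition~\ref{prop:hodge}(a) gives the orthogonal decomposition $C^1(A'_i,\RR) = \mathcal{H}(A'_i) \oplus d(C^0(A'_i,\RR))$, so $\pi_i(de) = de - df_i$, where $df_i$ is the orthogonal projection of $de$ onto $d(C^0(A'_i,\RR))$. Similarly, by Proposition~\ref{prop:df w orth}(c), $\pi(de) = de - \eta$, where $\eta$ is the orthogonal projection of $de$ onto the closed subspace $\cl(d(C^0_c(G,\RR))) \subseteq C^1_{L^2}(G,\RR)$. It therefore suffices to show $c_i^*(df_i) \to \eta$ in $C^1_{L^2}(G,\RR)$.

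The key identification is that $c_i^*$ is an isometric embedding $C^1(A'_i,\RR) \hookrightarrow C^1_{L^2}(G,\RR)$ whose restriction to $d(C^0(A'_i,\RR))$ hits exactly $d(C^0_{A_i}(G,\RR))$, where $C^0_{A_i}(G,\RR) \subseteq C^0_c(G,\RR)$ denotes functions vanishing on $V(G)\setminus V(A_i)$. Indeed, a function $f\in C^0(A'_i,\RR)$ lifts to a function $\tilde f$ on $V(G)$ which is constant on $V(G)\setminus V(A_i)$ (taking the value $f(p_{A_i})$); subtracting this constant does not change the differential and produces an element of $C^0_{A_i}(G,\RR)$. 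Moreover $c_i^*$ preserves the inner product on forms since edges in $A_i$ have the same lengths in $G$ and $A'_i$ and all other edges are mapped to zero. Under this identification, $c_i^*(df_i)$ is precisely the orthogonal projection of $de$ onto $V_i := d(C^0_{A_i}(G,\RR))$.

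Now the subspaces $V_i$ form an increasing chain, and since every $h \in C^0_c(G,\RR)$ is supported in $V(A_i)$ for all sufficiently large $i$, we have $\bigcup_i V_i = d(C^0_c(G,\RR))$, hence $\cl(\bigcup_i V_i) = \cl(d(C^0_c(G,\RR)))$. By the standard Hilbert-space result on monotone convergence of orthogonal projections, the projection of $de$ onto $V_i$ converges in $L^2$-norm to the projection of $de$ onto $\cl(d(C^0_c(G,\RR)))$, which is $\eta$. Thus $c_i^*(df_i) \to \eta$, and consequently $c_i^*(\pi_i(de)) = de - c_i^*(df_i) \to de - \eta = \pi(de)$, proving~(a).

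For Part~(b), by Definition~\ref{def:2} we have $\mu^{A'_i}_{\can}(e') = \ell(e')^{-1}\langle \pi_i(de'), de' \rangle$ and similarly for $\mu_{\can}^{\Gamma}$. For each fixed oriented edge $e$ of $G$, the $L^2$ convergence of Part~(a) implies coordinate-wise convergence $(c_i^*(\pi_i(de)))_e \to (\pi(de))_e$ (take the inner product with $de/\ell(e)$). Hence $c_i^*(\mu^{A'_i}_{\can})(e) \to \mu^{\Gamma}_{\can}(e)$ for every edge $e$ of $G$, which by \S\ref{sec:convmeas} is exactly the desired convergence of measures. The main obstacle is Step~2, namely carefully identifying $c_i^*(d(C^0(A'_i,\RR)))$ with $d(C^0_{A_i}(G,\RR))$ as a subspace of $C^1_{L^2}(G,\RR)$ and checking that $c_i^*$ is an isometry on this subspace; once this is in place, the remainder is the classical convergence of monotone orthogonal projections and mirrors the electrical-network proof in \cite{LP:book}.
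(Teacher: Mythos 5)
Your overall strategy is sound, and it is essentially the argument underlying the citation the paper itself uses in place of a proof (the paper simply invokes \cite[Proposition 9.2]{LP:book}); re-casting $c_i^*(\pi_i(de))$ and $\pi(de)$ as complementary projections onto an increasing family of gradient subspaces of the Hilbert space $C^1_{L^2}(G,\RR)$ and appealing to monotone convergence of orthogonal projections does prove (a), and your deduction of (b) from (a) is fine. However, the step you yourself single out as ``the key identification'' is stated incorrectly. The image $c_i^*\bigl(d(C^0(A'_i,\RR))\bigr)$ is \emph{not} $d\bigl(C^0_{A_i}(G,\RR)\bigr)$ with $C^0_{A_i}$ the functions vanishing on $V(G)\setminus V(A_i)$: every cochain in the image of $c_i^*$ vanishes on all edges outside $A_i$, whereas a function that is nonzero at a \emph{boundary} vertex $v$ of $A_i$ (such vertices do lie in $V(A_i)$) has nonzero slope on some edge of $\cl(\Gamma\backslash A_i)$ incident to $v$, so its differential is not of the form $c_i^*(df)$; and since $d$ is injective on compactly supported functions of a non-compact $\Gamma$, this cannot be repaired by choosing a different representative. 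The correct statement is that $c_i^*\bigl(d(C^0(A'_i,\RR))\bigr)=d(W_i)$, where $W_i\subseteq C^0_c(G,\RR)$ consists of functions vanishing at \emph{every} vertex of $\cl(\Gamma\backslash A_i)$, i.e.\ also at the boundary vertices of $A_i$ (lift $f$ to $f\circ c_i$ and subtract $f(p_{A_i})$; conversely extend $h\in W_i$ by $h(p_{A_i})=0$). Consequently $c_i^*(df_i)$ is the orthogonal projection of $de$ onto $d(W_i)$, not onto $d(C^0_{A_i}(G,\RR))$, so as written the assertion on which the whole monotone-projection argument rests is false.

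The good news is that the error is local and the proof survives the correction. The spaces $W_i$ are still increasing (since $\cl(\Gamma\backslash A_{i+1})\subseteq\cl(\Gamma\backslash A_i)$), and $\bigcup_i W_i=C^0_c(G,\RR)$: given $h$ with finite support $S$, each closed edge of $G$ incident to a vertex of $S$ lies in $A_i$ for all large $i$ (the $A_i$ are edge-subgraphs of $G$ increasing to $\Gamma$, and there are only finitely many such edges by local finiteness), so for large $i$ no vertex of $S$ lies in $\cl(\Gamma\backslash A_i)$ and $h\in W_i$. Hence $\cl\bigl(\bigcup_i d(W_i)\bigr)=\cl\bigl(d(C^0_c(G,\RR))\bigr)$, which by Proposition~\ref{prop:df w orth}(c) is the orthogonal complement of $\mathcal{H}_{L^2}(\Gamma)$ in $C^1_{L^2}(G,\RR)$, and the rest of your argument — the isometry property of $c_i^*$, $P_{c_i^*(V)}(c_i^*x)=c_i^*(P_V x)$, strong convergence of projections onto an increasing chain of closed subspaces, and the coordinatewise convergence giving (b) — goes through verbatim with $d(W_i)$ in place of $d(C^0_{A_i}(G,\RR))$. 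Please make this replacement (and the density argument just indicated) explicit; with it your write-up is a correct, self-contained proof, which is more than the paper provides.
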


\subsection{Induced measures from Galois covers}
\label{sec:}

Let $\phi \colon \Gamma'\rightarrow\Gamma$ be a Galois covering map between metric graphs. Since the canonical measure $\mu_{\can}^{\Gamma'}$ is invariant under isometries it is, in particular, invariant under the deck transformation group. Hence, there is a pushdown measure on $\Gamma$ of $\mu_{\can}^{\Gamma'}$ under the covering map $\phi $, which we will denote by $\mu_{\phi, \can} $.

\subsubsection{Finite Galois covers}
We first study $\mu_{\phi, \can} $ in the case that $\phi$ is a finite covering map between compact metric graphs:

\begin{Proposition} \label{prop:BergCovVol}
	Let $\phi \colon \Gamma' \rightarrow \Gamma$ be a Galois covering of degree $d$ between two compact metric graphs. 
Then
	\[
	\mu_{{\phi, \can}}(\Gamma)=g(\Gamma)-1+{1}/{d} \, .
	\]
\end{Proposition}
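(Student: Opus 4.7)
The plan is to reduce the statement to two inputs already established or standard: the trace formula (Proposition~\ref{prop:trace}) applied to $\Gamma'$, and the multiplicativity of the Euler characteristic under finite unramified covers of graphs.

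First I would unwind the definition of the pushdown measure given in \S\ref{sec:pullback}. Since $\phi$ is a finite Galois cover of degree $d$ and $\mu_{\can}^{\Gamma'}$ is invariant under the deck transformation group, if $F \subseteq \Gamma'$ denotes any fundamental domain for the deck action, then by definition
\[
\mu_{\phi,\can}(\Gamma) \;=\; \mu_{\can}^{\Gamma'}(F) \;=\; \frac{1}{d}\,\mu_{\can}^{\Gamma'}(\Gamma'),
\]
where the second equality uses that the $d$ translates of $F$ tile $\Gamma'$ (up to a measure-zero overlap) and have equal mass by deck-invariance.

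Next, since $\Gamma'$ is compact (as a finite cover of the compact graph $\Gamma$), Proposition~\ref{prop:trace} applies and gives
\[
\mu_{\can}^{\Gamma'}(\Gamma') \;=\; g(\Gamma').
\]
Finally, the covering $\phi$ is a topological cover of degree $d$, so Euler characteristics multiply: $\chi(\Gamma') = d\,\chi(\Gamma)$, i.e.\ $1-g(\Gamma') = d(1-g(\Gamma))$, which rearranges to
\[
g(\Gamma') \;=\; d\,g(\Gamma) - d + 1.
\]
Dividing by $d$ yields $\mu_{\phi,\can}(\Gamma) = g(\Gamma) - 1 + 1/d$, as claimed.

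There is no real obstacle here; the only subtlety is conceptual rather than technical, namely keeping straight the author's nonstandard convention that ``pushdown'' records the mass of a single fundamental domain rather than the full pushforward (so the factor $1/d$ is natural). Once this is in place, the proof is essentially a two-line calculation combining Foster's identity with Riemann--Hurwitz for unramified graph covers.
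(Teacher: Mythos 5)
Your proposal is correct and follows essentially the same route as the paper: the factor $1/d$ from the definition of the pushdown (the paper sums $\mu^{\Gamma'}_{\can}$ over edges of a compatible model rather than invoking a fundamental domain, but this is the same fact), then Proposition~\ref{prop:trace} giving total mass $g(\Gamma')=1-\chi(\Gamma')$, and finally $\chi(\Gamma')=d\,\chi(\Gamma)$. No gaps.
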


\begin{proof}
Let $G$ and $G'$ be models for $\Gamma$ and $\Gamma'$ compatible with $\phi$. We have
	\[
	\mu_{{\phi, \can}}(\Gamma) = \sum_{e\in E(G)}\mu_{{\phi, \can}}(e)={1\over d}\sum_{e\in E(G')} \mu^{\Gamma'}_{\can}(e)={1\over d}(1-\chi(\Gamma'))={1\over d}(1-d\chi(\Gamma))=g(\Gamma)-1+\frac{1}{d} \, .
	\] 
	The second equality follows from the definition of the pushdown measure. The third equality is Proposition~\ref{prop:trace}. The forth follows from
	$\chi(\Gamma')=d\chi(\Gamma)$.
\end{proof}

\subsubsection{Infinite Galois covers}
As one expects, there is an infinite covering analogue of Proposition~\ref{prop:BergCovVol}, which may be thought of as a {\em Gauss-Bonnet type theorem}. The proof is, however, much more subtle. 

\begin{Theorem}\label{thm:GaussBonnet}
Let $\phi \colon \Gamma' \rightarrow \Gamma$ be an infinite Galois covering of a compact metric graph $\Gamma$. Then
	\begin{equation} \label{eq:trformula}
	\mu_{\phi, \can} (\Gamma)=g(\Gamma)-1 \,  .
	\end{equation}
\end{Theorem}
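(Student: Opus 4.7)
The plan is to identify $\mu_{\phi,\can}(\Gamma)$ with the von Neumann trace $\Tr_\G$ of the orthogonal projection onto $\mathcal{H}_{L^2}(\Gamma')$, and to evaluate that trace combinatorially by exploiting the Hodge-type orthogonal decomposition of Proposition~\ref{prop:df w orth}. Throughout, let $\G$ denote the (infinite) deck transformation group, and fix compatible models $G$ for $\Gamma$ and $G'$ for $\Gamma'$. We pass to the complexifications of all the spaces so that the theory of \S\ref{sec:gdim} applies verbatim; this is harmless since the projection and pairings are real.

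First I would compute $\dim_\G \mathcal{H}_{L^2}(\Gamma')$. By Proposition~\ref{prop:df w orth}(c) we have an orthogonal decomposition of projective Hilbert $\G$-modules
\[ C^1_{L^2}(G',\CC) = \mathcal{H}_{L^2}(\Gamma') \oplus \cl(d(C^0_c(G',\CC))),\]
so by Remark~\ref{rmk:dimGprops}(iv) and Example~\ref{eg:C1dim} it suffices to evaluate the second summand. Because $\Gamma'$ has bounded geometry (edge lengths bounded below and vertex degrees bounded above, both inherited from the compact base $\Gamma$), the coboundary extends to a bounded $\G$-equivariant operator $d \colon C^0_{L^2}(G',\CC) \to C^1_{L^2}(G',\CC)$, and $\cl(d(C^0_c)) = \cl(d(C^0_{L^2}))$ by density. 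This $d$ is injective: any $f$ in its kernel is locally constant on the connected graph $\Gamma'$, hence globally constant, and the only $L^2$ constant on the infinite set $V(G')$ is zero. Hence $d$ is a weak isomorphism onto its closed image, so Remark~\ref{rmk:dimGprops}(vii) together with Example~\ref{eg:C1dim} yields $\dim_\G \cl(d(C^0_c)) = \dim_\G C^0_{L^2}(G',\CC) = |V(G)|$, and therefore
\[ \dim_\G \mathcal{H}_{L^2}(\Gamma') = |E(G)| - |V(G)| = g(\Gamma) - 1.\]

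Next I would identify the trace $\Tr_\G(\pi)$ of the orthogonal projection $\pi$ onto $\mathcal{H}_{L^2}(\Gamma')$ with $\mu_{\phi,\can}(\Gamma)$. Choose a fundamental domain $F \subset G'$ for the $\G$-action so that $\phi$ restricts to an isomorphism $F \xrightarrow{\sim} G$. Under the unitary isomorphism $U_F$ of Example~\ref{eg:C1dim}, the orthonormal basis $\{de/\sqrt{\ell(e)}\}_{e \in E(F)}$ of $C^1(F,\CC)$ corresponds to the basis $\{\delta_{\id} \otimes u_e\}$ appearing in Definition~\ref{def:trG}. Using the identity $\langle \pi(de), de\rangle = \ell(e)\,\mu_{\can}^{\Gamma'}(e)$ from Definition~\ref{def:2}, and translating between lifts in $F$ and edges of $G$ via the pushdown convention of \S\ref{sec:pullback}, the trace unfolds as
\[ \Tr_\G(\pi) = \sum_{e \in E(F)} \tfrac{1}{\ell(e)}\langle \pi(de), de\rangle = \sum_{e \in E(F)} \mu_{\can}^{\Gamma'}(e) = \sum_{e \in E(G)} \mu_{\phi,\can}(e) = \mu_{\phi,\can}(\Gamma).\]
Combining this with the previous step via Definition~\ref{def:dimG}, which asserts $\Tr_\G(\pi) = \dim_\G \mathcal{H}_{L^2}(\Gamma')$, yields \eqref{eq:trformula}.

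The main obstacle is the verification in the first step that $d \colon C^0_{L^2}(G',\CC) \to C^1_{L^2}(G',\CC)$ is bounded and injective: boundedness depends on the bounded-geometry estimates coming from compactness of the base, while injectivity crucially exploits that the cover is infinite (so that no nonzero constant is square-summable). Without these, one could not replace $C^0_c$ by $C^0_{L^2}$ in the $\G$-dimension computation and thus could not produce the clean combinatorial count $|E(G)| - |V(G)|$ underlying the Gauss--Bonnet identity.
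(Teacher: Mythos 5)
Your proposal is correct and follows essentially the same route as the paper's proof: both rest on the bounded, injective coboundary $d$ on $L^2$ cochains, Proposition~\ref{prop:df w orth}(c) with the density of $C^0_c$ in $C^0_{L^2}$, the $\G$-dimension computations of Example~\ref{eg:C1dim}, and the identification $\Tr_\G(\pi)=\sum_e\mu^{\Gamma'}_{\can}(e)=\mu_{\phi,\can}(\Gamma)$. The only difference is cosmetic: you phrase the count as an orthogonal decomposition plus a weak isomorphism (Remark~\ref{rmk:dimGprops}(iv),(vii)), whereas the paper packages the same facts as a weakly exact sequence $0\to C^0_{L^2}\to C^1_{L^2}\to\mathcal{H}_{L^2}(\Gamma')\to 0$.
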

\begin{proof} 
Let $G$ and $G'$ be compatible models for $\Gamma$ and $\Gamma'$. Let $H=\pi_1(\Gamma)/\pi_1(\Gamma')$ be the deck transformation group. 

Let $\pi \colon C^1_{L^2}(G',\RR)\rightarrow\mathcal{H}_{L^2}(\Gamma')$ be the orthogonal projection. Let $C^0_{L^2}(G', \CC)$ be the $L^2$ space of functions on $V(G')$ (endowed with the counting measure). We first prove the sequence
\begin{equation} \label{eq:weakexact}
0 \rightarrow C^0_{L^2}(G',\RR) \xrightarrow{d} C^1_{L^2}(G',\RR) \xrightarrow{\pi} \mathcal{H}_{L^2}(\Gamma') \rightarrow 0
\end{equation}
is weakly exact.
To see this, consider the coboundary map $d \colon  C^0_{L^2}(G',\RR)\rightarrow C^1_{L^2}(G',\RR)$.
\begin{itemize} [noitemsep, leftmargin=*]
\item[-] $d$ is continuous: it is a bounded operator whose operator norm is bounded above by $
 \sup\{{2\delta}/{\sqrt{\ell(e)}} \colon {e\in E(G')}\}$, 
 where $\delta$ denotes the maximum valence of vertices in $G$. This is a finite number, by our definition of metric graphs. 
 \item[-] $d$ is injective: if $f\in C^0_{L^2}(G',\RR)$ has $df=0$ then $f$ is constant. Since it is also $L^2$ integrable, we must have $f=0$. 
 \item[-] ${\rm Kernel}(\pi) = \cl({\rm Image}(d))$: by Proposition~\ref{prop:df w orth}~(c) we know ${\rm Kernel}(\pi) = \cl\left(d(C^0_c(G,\RR))\right)$. But $C^0_c(G',\RR)$ is dense in $C^0_{L^2}(G',\RR)$ and $d$ is continuous. Therefore $\cl\left(d(C^0_c(G,\RR))\right)=\cl(d(C^0_{L^2}(G',\RR))$.
 \end{itemize} 

Tensoring \eqref{eq:weakexact} with $\CC$, we conclude that the sequence:
\[
0 \rightarrow C^0_{L^2}(G',\CC) \xrightarrow{d} C^1_{L^2}(G',\CC) \xrightarrow{\pi} \mathcal{H}_{L^2}(\Gamma') \otimes \CC \rightarrow 0
\]
is also weakly exact and hence, by Remark~\ref{rmk:dimGprops}~(iv), we have
\[
\dim_{\G}(C^1_{L^2}(G',\CC)) = \dim_{\G}(\mathcal{H}_{L^2}(\Gamma') \otimes \CC) + \dim_{\G}(C^0_{L^2}(G',\CC)) \, .
\]
By Example \ref{eg:C1dim} we obtain:
\begin{equation}\label{eq:trace1}
 \dim_{\G}(\mathcal{H}_{L^2}(\Gamma') \otimes \CC) = |E(G)| - |V(G)| = g(\Gamma) - 1\,  .
 \end{equation}

If $\pi \colon C^1_{L^2}(G',\RR) \rightarrow \mathcal{H}_{L^2}(\Gamma')$ is the orthogonal projection, let $\pi' \colon C^1_{L^2}(G',\CC) \rightarrow \mathcal{H}_{L^2}(\Gamma') \otimes \CC$ be the projection $\pi \otimes 1$ (extension of scalars), and let $\langle \cdot , \cdot \rangle_{\CC} \colon  C^1_{L^2}(G',\CC) \times  C^1_{L^2}(G',\CC) \rightarrow \CC$ be the extension of $\langle \cdot , \cdot \rangle \colon  C^1_{L^2}(G',\RR) \times  C^1_{L^2}(G',\RR) \rightarrow \RR$. 

An orthonormal basis for $C^1_{L^2}(G',\CC) \simeq \ell^2(\G) \otimes (C^1(G, \RR) \otimes \CC)$ is 
\[
\left\{ \delta_g \otimes ( de \otimes 1)/{\sqrt{\ell(e)}} \colon g\in\G, e\in \mathcal{O} \right\} \, .
\] 
For $x_e =  \delta_{{\rm id}} \otimes (de \otimes 1) /{\sqrt{\ell(e)}} \in \ell^2(\G) \otimes (C^1(G, \RR) \otimes \CC)$, it follows from Definition~\ref{def:2} that 
\begin{equation}\label{eq:trace2}
\langle \pi'(x_e) , x_e \rangle_{\CC} = \frac{1}{\ell(e)}\langle \pi(\delta_{{\rm id}} \otimes de), \delta_{{\rm id}} \otimes de \rangle = \mu_{\can}^{\Gamma'}(e) \, .
\end{equation}
On the other hand, by Definition~\ref{def:trG} and Definition \ref{def:dimG}, we know
\begin{equation}\label{eq:trace3}
 \dim_{\G}(\mathcal{H}_{L^2}(\Gamma') \otimes \CC) = \sum_{e \in \Oc} \langle \pi'(x_e) , x_e \rangle_{\CC} \, .
\end{equation}

The result follows from putting together \eqref{eq:trace1}, \eqref{eq:trace2}, and \eqref{eq:trace3}.
\end{proof}
\begin{Remark} \phantomsection \label{rmk:GB}  
\begin{itemize}
\item[]
\item[(i)] As is clear from the proof, one might interpret \eqref{eq:trformula} in Theorem~\ref{thm:GaussBonnet} as a ``trace formula''.
\item[(ii)] It is well known that $g(\Gamma)-1$ (the right-hand side of \eqref{eq:trformula}) is the first (and the only nonzero) $L^2$ Betti number of $\Gamma$. So one might interpret the induced measure $\mu_{\phi, \can}$ on $\Gamma$ as giving a {\em measure-theoretic decomposition} of the first $L^2$ Betti number. This measure theoretic decomposition is independent of the choice of models for $\Gamma$.
\item[(iii)] Our base-change from $\RR$ to $\CC$ in the proof of Theorem~\ref{thm:GaussBonnet} is for convenience; the existing literature on von Neumann algebras is usually written over $\CC$.
\end{itemize}
\end{Remark}
\begin{Remark} \phantomsection
We are also able to prove Theorem~\ref{thm:GaussBonnet} {\em without} the use of von Neumann algebras and dimensions. However, that proof is far more technical, especially in the case that the infinite cover is not transient (in the sense of Definition~\ref{def:transient}). Moreover, the ``trace formula'' interpretation of \eqref{eq:trformula} is not clear from the more technical proof. 
\end{Remark}

\section{A generalized Kazhdan's theorem for metric graphs}
\label{sec:kazh}
In this section, we will show that the canonical measure satisfies a (generalized) Kazhdan-type theorem. 

\begin{Theorem} \label{thm:Kazhdan}
	Let $\phi \colon \Gamma' \rightarrow \Gamma$ be an infinite Galois covering of compact metric graph $\Gamma$. Let $\{\phi_n \colon \Gamma_n \rightarrow \Gamma \colon n \geq 1 \}$ be an ascending sequence of finite Galois covers converging to $\Gamma'$, in the sense that the equality
	\begin{equation}\label{eq:pi1intersection}
	\bigcap_{n \geq 1}  \pi_1(\Gamma_n) = \pi_1(\Gamma')
	\end{equation}
	holds in $\pi_1(\Gamma)$. 
Then we have the strong convergence of measures $\lim_{n\rightarrow\infty}\mu_{{\phi_n, \can}}=\mu_{\phi, \can}$.
	\end{Theorem}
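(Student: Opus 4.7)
Since $\Gamma$ is compact with a finite model $G$, both $\mu_{\phi_n,\can}$ and $\mu_{\phi,\can}$ are piecewise Lebesgue measures compatible with $G$, so strong convergence is equivalent to edge-wise convergence $\mu_{\phi_n,\can}(e)\to\mu_{\phi,\can}(e)$ for each $e\in E(G)$. The strategy combines two ingredients. The first is the total mass identity coming from Proposition~\ref{prop:BergCovVol} and Theorem~\ref{thm:GaussBonnet}:
\[
\sum_e\mu_{\phi_n,\can}(e)=g(\Gamma)-1+1/d_n\longrightarrow g(\Gamma)-1=\sum_e\mu_{\phi,\can}(e).
\]
The second is a one-sided edge-wise bound $\mu^*(e)\leq\mu_{\phi,\can}(e)$ for every subsequential limit $\mu^*$ of $\{\mu_{\phi_n,\can}\}$ (which exists by the uniform bound on total mass). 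Summing the one-sided bound and using the total mass identity forces $\mu^*(e)=\mu_{\phi,\can}(e)$ for every $e$; since the subsequence producing $\mu^*$ was arbitrary, $\mu_{\phi_n,\can}\to\mu_{\phi,\can}$.

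\textbf{Setting up the comparison form.} Write $\G=\pi_1(\Gamma)/\pi_1(\Gamma')$ and $H_n=\pi_1(\Gamma_n)/\pi_1(\Gamma')\trianglelefteq\G$, and choose a fundamental domain $F\cong\Gamma$ for $\G$ acting on $\Gamma'$. Fix an edge $e\in E(G)$, a lift $\tilde{e}\subseteq F$, and its image $\tilde{e}_n\in E(\Gamma_n)$. Put $\omega_n:=\pi_n(d\tilde{e}_n)\in\mathcal{H}(\Gamma_n)$ and let $\tilde\omega_n$ denote its pullback to $\Gamma'$, an $H_n$-invariant harmonic form. Idempotency of the orthogonal projection together with Definition~\ref{def:2} gives
\[
\tilde\omega_n(\tilde{e})=\omega_n(\tilde{e}_n)=\ell(e)\mu_{\phi_n,\can}(e)=\|\omega_n\|^2_{\Gamma_n}=\|\tilde\omega_n\|^2_{F_n},
\]
where $F_n$ denotes any fundamental domain for $H_n$; this quantity is uniformly bounded in $n$. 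Since $\bigcap_nH_n=\{1\}$ and $[\G:H_n]<\infty$, for every finite $T\subseteq\G$ the map $T\to\G/H_n$ is injective once $n$ is large, and one may then arrange $F_n\supseteq\bigsqcup_{g\in T}gF$, so $H_n$-invariance gives
\[
\sum_{g\in T}\|\tilde\omega_n|_{gF}\|^2\leq\|\tilde\omega_n\|^2_{F_n}=\ell(e)\mu_{\phi_n,\can}(e).
\]

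\textbf{Diagonal extraction and $L^2$ bound.} Each $\tilde\omega_n|_{gF}$ lies in the finite-dimensional space $C^1(gF,\RR)\cong C^1(G,\RR)$ and is bounded uniformly in $n$ for every fixed $g\in\G$. Since $\G$ is countable, a diagonal extraction yields a subsequence $\tilde\omega_{n_k}$ converging edge-by-edge on $\Gamma'$ to some $\omega^*$, which is automatically harmonic (harmonicity at a vertex involves only finitely many edges). Refining further so that $\mu_{\phi_{n_k},\can}\to\mu^*$ and passing to the limit in the finite sum above,
\[
\sum_{g\in T}\int_{gF}|\omega^*|^2\leq\ell(e)\mu^*(e);
\]
letting $T\nearrow\G$ shows $\omega^*\in\mathcal{H}_{L^2}(\Gamma')$ with $\|\omega^*\|^2\leq\ell(e)\mu^*(e)$, while pointwise convergence at $\tilde e$ gives $\omega^*(\tilde{e})=\ell(e)\mu^*(e)$.

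\textbf{Cauchy--Schwarz finish and main obstacle.} The variational characterization of Proposition~\ref{prop:mcmul} applied to $\omega^*\in\mathcal{H}_{L^2}(\Gamma')$ yields $|\omega^*(\tilde{e})|^2\leq\ell(e)\mu_{\phi,\can}(e)\|\omega^*\|^2$; substituting the identities above simplifies to $\mu^*(e)^2\leq\mu_{\phi,\can}(e)\mu^*(e)$, whence $\mu^*(e)\leq\mu_{\phi,\can}(e)$ (the case $\mu^*(e)=0$ being trivial since canonical measures are non-negative). Together with the total mass identity this forces equality at every edge, finishing the proof. The \emph{main obstacle} is producing $\omega^*$ as an honest $L^2$ harmonic form on $\Gamma'$: the $\tilde\omega_n$ are $H_n$-invariant with $H_n$ infinite, hence non-$L^2$ on $\Gamma'$, so the $L^2$ control comes only through the expanding fundamental domains $F_n$. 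It is precisely the growth $[\G:H_n]\to\infty$ that allows arbitrarily large finite unions of $\G$-translates of $F$ to inject into $F_n$, upgrading the uniform $F_n$-bound on $\|\tilde\omega_n\|^2$ to an honest $L^2(\Gamma')$-bound on $\omega^*$.
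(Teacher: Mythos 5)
Your proposal is correct, and while it shares the paper's overall skeleton, it proves the key one-sided estimate by a genuinely different mechanism. Like the paper, you reduce to edge-wise convergence on a fixed finite model and combine a one-sided inequality ($\mu^*(e)\le\mu_{\phi,\can}(e)$, equivalently $\limsup_n\mu_{\phi_n,\can}(e)\le\mu_{\phi,\can}(e)$) with the total-mass bookkeeping from Proposition~\ref{prop:BergCovVol} and Theorem~\ref{thm:GaussBonnet}; that summation step is exactly the paper's ``Claim 2'', so the von Neumann--dimension input (the Gauss--Bonnet trace formula) is common to both arguments. The difference is in the one-sided bound (the paper's ``Claim 1''): the paper exhausts $\Gamma'$ by compact subgraphs $A_i$, uses the hypothesis \eqref{eq:pi1intersection} to embed each $A_i$ isometrically into $\Gamma_n$ for $n$ large (only finitely many loop classes to exclude), and then invokes Rayleigh monotonicity (Proposition~\ref{lem3}) together with the approximation theorem (Proposition~\ref{approx}); you instead pull back the extremizing projections $\pi_n(d\tilde e_n)$ to $\Gamma'$, use \eqref{eq:pi1intersection} in the form $\bigcap_n H_n=\{1\}$ to fit arbitrarily large finite unions of deck-translates of a fundamental domain inside a fundamental domain for $H_n$, extract an edge-wise limit $\omega^*$ that is harmonic and $L^2$ with $\|\omega^*\|^2\le\ell(e)\mu^*(e)$ and $\omega^*(\tilde e)=\ell(e)\mu^*(e)$, and finish with the extremal characterization of Proposition~\ref{prop:mcmul}. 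Your route is a ``weak limit of extremizers'' (normal-families) argument, closer in spirit to the classical Kazhdan/McMullen proof for surfaces, and it avoids the potential-theoretic inputs (Propositions~\ref{lem3} and~\ref{approx}) at the cost of the diagonal extraction and fundamental-domain bookkeeping; the paper's route avoids any compactness extraction by trading it for monotonicity under contraction. Two small points to make explicit when writing this up: the limit $g(\Gamma)-1+1/d_n\to g(\Gamma)-1$ uses $d_n\to\infty$, which follows because the nested subgroups $\pi_1(\Gamma_n)$ intersect in the infinite-index subgroup $\pi_1(\Gamma')$ (the paper uses this implicitly too), and the statement $F\cong\Gamma$ and the inclusion $F_n\supseteq\bigsqcup_{g\in T}gF$ should be interpreted edge-wise (one lift of each edge per coset), which is all your $L^2$ estimates require since the deck action is free.
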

\begin{Remark} 
\begin{itemize}
\item[]
		\item[(i)] We have omitted the base points for the fundamental groups in \eqref{eq:pi1intersection}, because by our Galois assumption, we have $\pi_1(\Gamma') \unlhd \pi_1(\Gamma)$ and $\pi_1(\Gamma_n) \unlhd \pi_1(\Gamma)$ for all $n$.
		\item[(ii)] One can replace the tower of covers $\{\Gamma_n\}$ in Theorem~\ref{thm:Kazhdan} with a {\em cofinal} family $\mathcal{F}$ of finite covers between $\Gamma$ and $\Gamma'$, provided that the covering group $\pi_1(\Gamma) / \pi_1(\Gamma')$ is {\em residually finite}. In particular, this applied to the case where $\Gamma'$ is the universal cover of $\Gamma$, in which case $\pi_1(\Gamma) / \pi_1(\Gamma') = \pi_1(\Gamma) \simeq F_g$ is a free group on $g$ generators. Note, however, it is not hard to find examples of quotients of free groups which are not residually finite (see e.g. \cite[Section 2]{magnus}).
	\end{itemize} 
	\end{Remark}
	
	\begin{proof} 
	
		Let $G$, $G_n$, and $G'$ denote models for $\Gamma$, $\Gamma_n$, and $\Gamma'$, compatible with the covering maps.

\noindent {\bf Claim 1.} For any $e\in E(G)$,
\begin{equation}\label{sup}
		\limsup_{n}\mu_{{\phi_n, \can}}(e)\leq \mu_{{\phi, \can}}(e) \, .
		\end{equation}
		\noindent {\em Proof of Claim 1.} Let $e' \in E(G')$ be a fixed lift of $e \in E(G)$. Let $\{A_i\}$ be an increasing sequence of compact subgraphs compatible with $G'$ and containing $e'$, such that $\Gamma' = \bigcup_i A_i$. Let $A'_i=\Gamma'/\cl(\Gamma'\backslash A_i)$. By Proposition~\ref{approx}~(b), we have convergence 		
		\[
		\lim_i{\mu_{\can}^{A'_i}}(e') = \mu_{\can}^{\Gamma'}(e') = \mu_{{\phi, \can}}(e) \, .
		\]

           Because $A_i$ has a model which is a finite subgraph of $G'$, it contains only finitely many vertices of $G'$. Hence, the number of simple paths in $A_i$ starting and ending at vertices of $G'$ must be finite. Consider the simple paths in $A_i$ that are sent to loops $\gamma$ in $\Gamma$. Each such loop $\gamma$ represents a conjugacy class $[\gamma]$ in $\pi_1(\Gamma)$ which is not contained in $\pi_1(\Gamma')$. Because $\bigcap \pi_1(\Gamma_n)=\pi_1(\Gamma')$, for each conjugacy class $[\gamma]$ there is some integer $N_\gamma$ such that $[\gamma]\not\subseteq\pi_1(\Gamma_{N_\gamma})$. Let $N_i$ be the maximum of all these finitely many $N_\gamma$. Then $A_i$ is sent isometrically to its image under $\phi_{N_i}$ as no two points in $A_i$ can be sent to the same point in $\Gamma_{N_i}$. Because all our coverings are Galois, the deck transformation group acts transitively on preimages of any edge. Hence, for any lift $e''$ of $e$ in any $\Gamma_n$, $n\geq N_i$, there is a subgraph in $\Gamma_n$ isometric to $A_i$ such that the isometry sends $e''$ to $e'$. Therefore, by Proposition~\ref{lem3}, for $n\geq N_i$ we have
		\[
		\mu_{{\phi_n, \can}}(e) =  \mu_{\can}^{\Gamma_n}(e'') \leq \mu_{\can}^{A'_i}(e') \,  .
		\]
Therefore
                \[
		\limsup_{n}\mu_{{\phi_n, \can}}(e) \leq \limsup_i \mu_{\can}^{A'_i}(e') = \lim_i \mu_{\can}^{A'_i}(e') = \mu_{{\phi, \can}}(e) \, .
		\]
                
\noindent {\bf Claim 2.} For any $e\in E(G)$,
\begin{equation}\label{sup2}
		 \mu_{{\phi, \can}}(e) \leq \liminf_{n}\mu_{{\phi_n, \can}}(e) \, .
		\end{equation}
		\noindent {\em Proof of Claim 2.} Consider the function $F_n \colon E(G) \rightarrow \RR$ defined by
		\[F_n(f)=\mu_{{\phi_n, \can}}(f)-\mu_{{\phi, \can}}(f) \, .\]
		
		By Claim 1 we know, for all $f \in E(G)$, 
		\begin{equation} \label{eq:Fneg}
		\limsup_n F_n (f) \leq 0 \, .
		\end{equation}
		On the other hand, using Proposition~\ref{prop:BergCovVol} and Theorem~\ref{thm:GaussBonnet}, we have
		\begin{equation}\label{eq:sumF}
		\lim_n \left(\sum_{f \in E(G)} F_n(f)\right)=\lim_n \left(\mu_{{\phi_n, \can}}(\Gamma)-\mu_{\phi, \can}(\Gamma)\right)= 0 \, .
		\end{equation}
Claim 2 follows from the following: 
		\[
		\begin{aligned}
		\liminf_n F_n(e) &= \liminf_n \left(\sum_{f}F_n(f) - \sum_{f \ne e}F_n(f)\right)\\
		 &\geq \liminf_n \left(\sum_{f}F_n(f) \right) +\liminf_n\left(- \sum_{f \ne e}F_n(f)\right) \\
		 &= 0 - \limsup_n\left(\sum_{f \ne e}F_n(f)\right)	&\text{by \eqref{eq:sumF}}\\	 
		 &\geq -\sum_{f \ne e}\left(\limsup_n F_n(f)\right)\\
		 &\geq 0 &\text{by \eqref{eq:Fneg}} \, .
		\end{aligned}
		\] 
		Since $\Gamma$ is compact, it follows from \eqref{sup} and \eqref{sup2} that we have the desired strong convergence of measures.
			\end{proof}

\section{Interpretations of limiting measures}
\label{sec:examples} 
In this section, we provide a few examples for measures inherited from infinite covers. We will study canonical measures on universal covers in detail, as one might want to consider them as an analogue of the ``hyperbolic measure''. We will study this case from a computational point of view. We will also relate the ``hyperbolic measure'' to other notions such as Poisson-Jensen and equilibrium measures.

\subsection{Basic examples}
\label{sec:ex}

We start with the following easy observation:
\begin{Lemma}\label{uniform}
	Let $\Gamma$ be a metric graph and let $G$ a model for $\Gamma$. Assume the group of isometries of $\Gamma$ acts transitively on $E(G)$. Then the canonical measure $\mu_{\can}$ on $\Gamma$ is proportional to the standard Lebesgue measure on $\Gamma$. 
\end{Lemma}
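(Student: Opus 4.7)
The plan is to exploit the isometry invariance of the canonical measure, which was noted just after Definition~\ref{def:2} and follows immediately from Definition~\ref{def:2} itself: an isometry of $\Gamma$ induces a unitary automorphism of $\Omega^1_{L^2}(\Gamma)$ preserving $\mathcal{H}_{L^2}(\Gamma)$ and the bilinear pairing, hence it commutes with the orthogonal projection $\pi$ and preserves $\mu_{\can}(e)$ for each edge.

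First I would observe that if the isometry group acts transitively on $E(G)$, then any two edges $e,e' \in E(G)$ have the same length $\ell(e) = \ell(e')$; call this common value $L$. Next, by the isometry invariance noted above, the values $\mu_{\can}(e)$ are all equal for $e \in E(G)$; call this common value $c$. Since $\mu_{\can}$ is a piecewise Lebesgue measure compatible with $G$, on each edge $e$ it has density
\[
\mu_{\can}|_e = \frac{\mu_{\can}(e)}{\ell(e)} \, dx = \frac{c}{L}\, dx \, ,
\]
which is the same constant multiple of $dx$ on every edge. Therefore $\mu_{\can}$ is proportional to the Lebesgue measure on $\Gamma$.

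There is no real obstacle here; the entire content of the lemma is packaged in the isometry invariance of $\mu_{\can}$, which is built into Definition~\ref{def:2}. The only mild care needed is to note that transitivity forces the length function to be uniform on $E(G)$, so that the (a priori edge-dependent) density $\mu_{\can}(e)/\ell(e)$ is genuinely constant across $\Gamma$.
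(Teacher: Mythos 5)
Your proof is correct and follows essentially the same route as the paper: isometry invariance of $\mu_{\can}$ (built into Definition~\ref{def:2}) forces $\mu_{\can}(e)$ to be constant over $E(G)$, and transitivity likewise forces the edge lengths to agree, so the density is constant. The paper's proof is just a terser version of the same argument; your added remark about uniform edge lengths is a reasonable (and correct) explicit spelling-out of a point the paper leaves implicit.
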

\begin{proof}
The canonical measure is preserved under isometries, so the value of $\mu_{\can}$ evaluated on every edge must be the same.
\end{proof}
As an immediate corollary, we have the following result on regular graphs. Recall a (combinatorial) graph is called $k$-regular if every vertex has the same valency (or degree) $k$. 

\begin{Corollary} \label{cor:reg}
	Let $\Gamma$ be a compact metric graph, $G$ be a model for $\Gamma$ such that $G$ is a $k$-regular graph where $k\geq 3$. Assume all edges of $G$ have the same length. Let $\phi \colon \Gamma' \rightarrow \Gamma$ be the universal covering map. Then $\mu_{\phi, \can}(e)={1-2/k}$ for all $e\in E(G)$.
\end{Corollary}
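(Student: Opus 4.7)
The plan is to combine the symmetry consequence of Lemma~\ref{uniform} applied to the universal cover with the Gauss--Bonnet identity of Theorem~\ref{thm:GaussBonnet} to pin down the constant. Under the hypotheses, the universal cover $\Gamma'$ is (isometric to) the infinite $k$-regular tree $T_k$ with every edge of length $\ell$, where $\ell$ is the common edge length in $G$. The automorphism group of $T_k$ acts transitively on edges, so Lemma~\ref{uniform} applies on $\Gamma'$ and gives that $\mu_{\can}^{\Gamma'}$ is proportional to Lebesgue measure; in particular there is a single constant $c > 0$ such that $\mu_{\can}^{\Gamma'}(e') = c\,\ell(e')= c\ell$ for every edge $e'$ of a model of $\Gamma'$.

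Next I would transfer this uniformity down to $\Gamma$. By the construction of the pushdown measure (\S\ref{sec:pullback}), for each edge $e \in E(G)$ we have $\mu_{\phi,\can}(e) = \mu_{\can}^{\Gamma'}(\tilde e)$ for any lift $\tilde e$ of $e$, so $\mu_{\phi,\can}(e) = c\ell$ is the same for every $e \in E(G)$. Thus the entire problem reduces to computing the single number $c\ell$.

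To determine $c\ell$, I would apply Theorem~\ref{thm:GaussBonnet}:
\[
\mu_{\phi,\can}(\Gamma) = g(\Gamma) - 1.
\]
Since $G$ is $k$-regular, a standard handshake count gives $|E(G)| = k|V(G)|/2$, hence
\[
g(\Gamma) - 1 = |E(G)| - |V(G)| = \frac{(k-2)|V(G)|}{2}.
\]
On the other hand, summing the edge masses yields $\mu_{\phi,\can}(\Gamma) = |E(G)|\cdot c\ell = \tfrac{k|V(G)|}{2}\,c\ell$. Equating the two expressions and cancelling $|V(G)|/2$ gives $c\ell = (k-2)/k = 1 - 2/k$, which is exactly the claimed value $\mu_{\phi,\can}(e) = 1 - 2/k$.

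There is no real obstacle; the only subtlety worth double-checking is that the $k$-regular tree really is the universal cover with its intrinsic metric (which is automatic because every lift of every edge in $G$ has length $\ell$, and the covering map is a local isometry), and that Lemma~\ref{uniform} is applicable to the non-compact metric graph $\Gamma'$ — this holds because Lemma~\ref{uniform} is stated for arbitrary metric graphs. The hypothesis $k \geq 3$ enters in two places: it ensures $g(\Gamma) \geq 2$ (so the formula is non-trivial) and it guarantees that $\mathcal{H}_{L^2}(\Gamma') \neq 0$, which is implicit in using Theorem~\ref{thm:GaussBonnet}.
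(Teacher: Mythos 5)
Your proposal is correct and follows essentially the same route as the paper: identify $\Gamma'$ with the edge-transitive $k$-regular tree so that Lemma~\ref{uniform} makes $\mu_{\can}^{\Gamma'}$ proportional to Lebesgue measure, then use Theorem~\ref{thm:GaussBonnet} together with the handshake lemma $k|V(G)|=2|E(G)|$ to solve for the common edge mass. The only cosmetic difference is that you spell out the per-edge constant $c\ell$ and the pushdown step explicitly, which the paper leaves implicit.
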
	
\begin{proof}
Since $\Gamma'$ is an infinite $k$-regular tree with uniform edge lengths, we are in the situation of Lemma~\ref{uniform}, and the canonical measure on $\Gamma'$ is proportional to the Lebesgue measure.  
	From Theorem~\ref{thm:GaussBonnet}, we know the pushdown of this canonical measure under $ \phi \colon\Gamma'\rightarrow \Gamma$ has total mass $g(\Gamma) - 1$. Let $|E(G)|=m$ and $|V(G)|=n$.  We have $m \mu_{\phi, \can}(e) =  g(\Gamma)-1 = m-n$, and therefore	$\mu_{\phi, \can}(e) = 1-{n}/{m} = 1-{2}/{k}$. The last equality is by the handshake lemma: $kn=2m$.
\end{proof}

Next example shows that, on the same metric graph, different infinite covers might induce different limiting measures.
\begin{Example}\label{ex2} 
	Let $\Gamma$ be a metric graph with a model $G$, which is a ``rose'' consisting of one vertex and $d$ loops of length $1$ (see Figure~\ref{fig:rose}).
	
	\begin{itemize}
	\item[(i)] Let $\phi' \colon \Gamma' \rightarrow \Gamma$ be the maximal abelian covering. Then $G$ is the usual $\mathbb{Z}^d$ grid graph. By Lemma~\ref{uniform}, we know that $\mu_{\phi', \can}(\cdot)$ is constant on $E(G)$. By Theorem~\ref{thm:GaussBonnet} we know $\mu_{\phi', \can}(\Gamma) = d-1$. Therefore $\mu_{\phi', \can}(e)=1-{1/d}$ for all $e\in E(G)$.

 \begin{figure}[H]
   \begin{tikzpicture}
     \draw[-](-0.5,0)circle(0.5);
     \draw[-](0.5,0)circle(0.5);
     \node at (-1.3,0){$e_1$};
     \node at (1.3,0){$e_2$};
     \draw[fill](0,0) circle (2pt);

     \draw[-](3,0)--(7,0);
     \draw[-](3,1)--(7,1);
     \draw[-](3,-1)--(7,-1);
     \draw[-](4,-2)--(4,2);
     \draw[-](5,-2)--(5,2);
     \draw[-](6,-2)--(6,2);
     
     \draw[fill](4,0) circle (2pt);
  \draw[fill](5,0) circle (2pt);
    \draw[fill](6,0) circle (2pt);
      \draw[fill](4,1) circle (2pt);
        \draw[fill](5,1) circle (2pt);
          \draw[fill](6,1) circle (2pt);
            \draw[fill](4,-1) circle (2pt);
              \draw[fill](5,-1) circle (2pt);
                \draw[fill](6,-1) circle (2pt);
     \node at (3.7,0.5){$e'_1$};

     \node at (4.5,1.3){$e'_2$};

\end{tikzpicture}   
\caption{\label{fig:rose}
  Left: a rose graph with $2$ loops. 
  Right: the maximal abelian covering, the $\ZZ^2$ grid graph.}
  \end{figure}
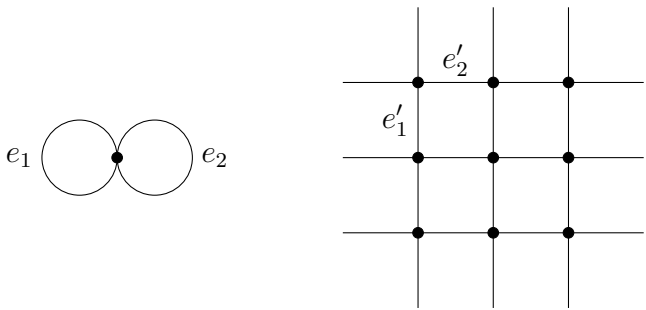
\vspace{-1mm}
\item[(ii)]
	Let $d=4$ and let $E(G)=\{e_1, e_2, e_3, e_4\}$. Consider the surjective homomorphism $\pi_1(\Gamma) \twoheadrightarrow \mathbb{Z}^3$ which sends $e_1,e_2,e_3$ to the three standard generators of $\mathbb{Z}^3$, while sending $e_4$ to $0$. 
The Galois covering space $\phi'' \colon \Gamma'' \rightarrow \Gamma$ corresponding to this homomorphism is the usual $\mathbb{Z}^3$ grid with one loop attached to each vertex. It follows from part (i) and Lemma~\ref{lemma:bridge} that the induced measure on $\Gamma$ is given by $\mu_{\phi'', \can}(e_1)=\mu_{\phi'', \can}(e_2)=\mu_{\phi'', \can}(e_3)={2/3}$, $\mu_{\phi'', \can}(e_4)=1$. 
\end{itemize}

\end{Example}

\subsection{Transient metric graphs and universal covers} 

\subsubsection{Transient metric graphs}
Infinite covers which are ``transient'' are easier to work with. The following result is essentially \cite[Theorem 2.10]{LP:book}.
\begin{Lemma}\label{lem:anyall}
Let $\Gamma$ be a metric graph. The following are equivalent:
\begin{itemize}
\item[(i)] For {\em some} $x \in \Gamma$, there exists $\omega\in\Omega^1_{L^2}(\Gamma)$ such that $d^* \omega=\delta_x$.
\item[(ii)] For {\em all} $x \in \Gamma$, there exists $\omega\in\Omega^1_{L^2}(\Gamma)$ such that $d^* \omega=\delta_x$.
\end{itemize}
\end{Lemma}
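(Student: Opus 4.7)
The plan is to prove the equivalence by noting that (ii) trivially implies (i), and establishing (i)$\Rightarrow$(ii) by a path-joining argument. The key tool is the 1-form associated to a piecewise linear path, whose codifferential was computed via equation~\eqref{adj}.

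First, I would assume (i), so there exists $\omega_0 \in \Omega^1_{L^2}(\Gamma)$ with $d^*\omega_0 = \delta_x$. Let $y \in \Gamma$ be an arbitrary point. Since $\Gamma$ is connected by definition and admits piecewise linear paths between any two of its points, I can choose a path $\gamma$ from $x$ to $y$, represented by a walk $v_0, e_0, v_1, \ldots, e_{k-1}, v_k$ in some model $G$ containing $x$ and $y$ as vertices.

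Next, I would consider the associated form $\omega_\gamma = \sum_{i=0}^{k-1} de_i \in \Omega^1(\Gamma)$. Applying~\eqref{adj} to $\omega_\gamma$, the incoming and outgoing contributions at each intermediate vertex cancel telescopically, leaving
\[
d^*\omega_\gamma = \delta_y - \delta_x.
\]
Because the walk is finite, $\omega_\gamma$ is supported on finitely many edges of $G$, and in particular $\langle \omega_\gamma, \omega_\gamma\rangle = \sum_i \ell(e_i) < \infty$, so $\omega_\gamma \in \Omega^1_{L^2}(\Gamma)$.

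Finally, setting $\omega = \omega_0 + \omega_\gamma$, I conclude $\omega \in \Omega^1_{L^2}(\Gamma)$ as a sum of two $L^2$ forms, and by linearity of $d^*$,
\[
d^*\omega = \delta_x + (\delta_y - \delta_x) = \delta_y,
\]
which proves (ii). There is no genuine obstacle here: the argument just records the fact that $\delta_y - \delta_x$ is always an $L^2$-coboundary through a finite-length path, so solvability of $d^*\omega = \delta_x$ within $\Omega^1_{L^2}(\Gamma)$ is governed only by the ``behavior at infinity'' of $\Gamma$ and not by the chosen base point---which is precisely the content of transience.
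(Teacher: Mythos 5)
Your argument is correct: $d^*\omega_\gamma=\delta_y-\delta_x$ follows from \eqref{adj} exactly as the paper itself uses in the proof of Proposition~\ref{prop:jexists}, a finite walk gives a finitely supported (hence $L^2$) form, and adding it to $\omega_0$ proves (i)$\Rightarrow$(ii); the only implicit points (that $x,y$ can be taken as vertices of a common model and that any two points of a connected metric graph are joined by a finite piecewise linear path) are harmless and consistent with the paper's conventions. The paper, however, does not prove the lemma in-house: it simply cites \cite{LP:book}*{Theorem 2.10}, i.e.\ it treats the statement as the standard fact that transience of a network (existence of a finite-energy unit flow to infinity) does not depend on the base vertex. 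So your route is genuinely different in presentation, though not in substance: the reference's proof rests on the same device of modifying a finite-energy unit flow by a flow along a finite path, but phrased in the probabilistic/electrical-network language. What your version buys is a short, self-contained proof entirely within the paper's formalism of forms and the operator $d^*$, with no appeal to the random-walk literature; what the citation buys is the link to that literature, which the paper exploits elsewhere (e.g.\ the remark identifying this notion of transience with transience of random walks on models, and the use of Rayleigh monotonicity and exhaustion results from the same source).
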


\begin{Definition}\label{def:transient}
A metric graph $\Gamma$ is called {\em transient}, if it satisfies the equivalent conditions in Lemma~\ref{lem:anyall}. 
\end{Definition}
\begin{Remark}
The name ``transient'' is compatible with the notion of transience of random walks on infinite combinatorial graphs: a metric graph $\Gamma$ is transient in our sense if and only if any model $G$ of $\Gamma$ is transient in the sense of random walks. See \cite[Chapter 2]{LP:book} and references therein for the random walk literature.
\end{Remark}
The following result implies that the universal cover of a metric graph of genus at least $2$ is always transient. 
\begin{Proposition} \label{prop:transient}
Let $\Gamma$ be a metric graph. Assume 
\begin{itemize}
\item[(i)] there exists a model $G$ for $\Gamma$ which is an infinite tree with only finitely many valence $1$ and $2$ vertices, 
\item[(ii)] there exists a real number $C$ such that for all $e \in E(G)$, we have $\ell(e) <C$, i.e. the edge lengths are uniformly bounded from above.
\end{itemize}
Then $\Gamma$ is transient.
\end{Proposition}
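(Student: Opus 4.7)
The plan is to verify condition (i) of Lemma~\ref{lem:anyall} directly, by constructing an explicit $L^2$ form $\omega$ satisfying $d^*\omega = -\delta_x$ (equivalent to (i) after negation). In electrical-network language this says the effective resistance from $x$ to infinity is finite; the bounded edge-length hypothesis will control the $L^2$-norm pointwise, while the valence hypothesis will force enough branching at infinity to make the total energy converge as a geometric series.

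The first (and essentially only substantive) step is a reduction. Let $V_0 \subset V(G)$ be the finite set of vertices of valence $1$ or $2$, pick any vertex $x$, and choose a finite connected subgraph $R \subseteq G$ containing $V_0 \cup \{x\}$. Because $G$ is a tree, $G \setminus V(R)$ is a finite disjoint union of subtrees $T_1, \ldots, T_m$, each joined to $R$ by a single edge $e_i$ with endpoint $r_i \in V(T_i)$. Every vertex of $T_i$ lies outside $V_0$ and hence has valence $\geq 3$ in $G$; rooting $T_i$ at $r_i$, this forces every vertex of $T_i$ (including $r_i$) to have at least two children, and in particular every $T_i$ is infinite.

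Next, define $\omega$ in two stages. Inside $R$, route a unit flow from $x$ to the boundary along the unique tree paths, producing nonnegative outputs $f_1, \ldots, f_m$ with $\sum_i f_i = 1$ on the edges $e_i$. Inside each $T_i$, propagate $f_i$ by splitting equally at every vertex among its (at least two) children. By construction $d^*\omega = -\delta_x$. Let $E_n^{(i)}$ denote the edges of $T_i$ at depth $n$ from $r_i$. Equal splitting gives the pointwise bound $\omega_e \leq f_i \cdot 2^{-n}$ for $e \in E_n^{(i)}$, while flow conservation gives $\sum_{e \in E_n^{(i)}} \omega_e = f_i$. Combined with $\ell(e) \leq C$, these yield
\[
\sum_{e \in T_i} \omega_e^2 \, \ell(e) \;\leq\; C \sum_{n \geq 1} \Bigl( \max_{e \in E_n^{(i)}} \omega_e \Bigr) \sum_{e \in E_n^{(i)}} \omega_e \;\leq\; C f_i^2 \sum_{n \geq 1} 2^{-n} \;=\; C f_i^2.
\]
The contribution from $R$ is trivially finite and $\sum_i f_i^2 \leq \bigl(\sum_i f_i\bigr)^2 = 1$, so $\|\omega\|^2 < \infty$.

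The hard part is really just the geometric reduction in the second paragraph: confirming that after excising a sufficiently large finite region, every vertex of the remaining forest has at least two children, which is where both hypotheses (the tree structure and the valence condition) are used simultaneously. Once that point is in hand, the $L^2$ estimate collapses to a single convergent geometric series thanks to the uniform upper bound on edge lengths, and Lemma~\ref{lem:anyall} gives the transience of $\Gamma$.
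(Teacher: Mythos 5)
Your proof is correct and takes essentially the same route as the paper: both verify Lemma~\ref{lem:anyall}(i) by constructing an explicit unit flow out to infinity that splits in (at least) two at every vertex of valence $\geq 3$, so the energy decays geometrically and is finite thanks to the uniform bound $\ell(e)<C$. The only difference is cosmetic --- the paper extracts a full infinite binary subtree rooted at a vertex $x$ and extends the $2^{-\mathrm{level}}$ flow by zero, while you route the flow from an arbitrary $x$ through a finite core containing the low-valence vertices and split equally among all children; your reduction step in effect supplies the details behind the paper's one-line claim that such a binary subtree exists.
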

 
We will need the following terminology for the proof. A {\em full} binary tree is a rooted binary tree in which every node has two children. The {\em level} of a vertex in a rooted tree is the number of edges for the shortest walk from the root.
 
\begin{proof}
  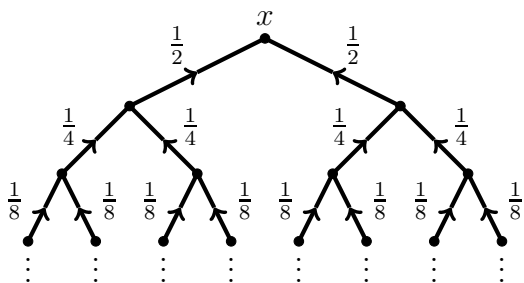
\begin{figure}[H]
   \begin{tikzpicture}[scale=.9]
        \node at (0,0.3){$x$};
     \draw[black, ultra thick, ->](-2,-1)--(-1,-.5);
     \draw[black, ultra thick, -](-1,-.5)--(0,0);
     \draw[black, ultra thick, ->](2,-1)--(1,-.5);
     \draw[black, ultra thick, -](1,-.5)--(0,0);
     
     \draw[black, ultra thick, ->](-3,-2)--(-2.5,-1.5);
     \draw[black, ultra thick, -](-2.5,-1.5)--(-2,-1);
     \draw[black, ultra thick, ->](-1,-2)--(-1.5,-1.5);
     \draw[black, ultra thick, -](-1.5,-1.5)--(-2,-1);

     \draw[black, ultra thick, ->](1,-2)--(1.5,-1.5);
     \draw[black, ultra thick, -](1.5,-1.5)--(2,-1);
     \draw[black, ultra thick, ->](3,-2)--(2.5,-1.5);
     \draw[black, ultra thick, -](2.5,-1.5)--(2,-1);
     
          \draw[black, ultra thick, ->](-3.5,-3)--(-3.25,-2.5);
     \draw[black, ultra thick, -](-3.25,-2.5)--(-3,-2);
     \draw[black, ultra thick, ->](-2.5,-3)--(-2.75,-2.5);
     \draw[black, ultra thick, -](-2.75,-2.5)--(-3,-2);
     
          \draw[black, ultra thick, ->](-1.5,-3)--(-1.25,-2.5);
     \draw[black, ultra thick, -](-1.25,-2.5)--(-1,-2);
     \draw[black, ultra thick, ->](-.5,-3)--(-.75,-2.5);
     \draw[black, ultra thick, -](-.75,-2.5)--(-1,-2);
     
               \draw[black, ultra thick, ->](1.5,-3)--(1.25,-2.5);
     \draw[black, ultra thick, -](1.25,-2.5)--(1,-2);
     \draw[black, ultra thick, ->](.5,-3)--(.75,-2.5);
     \draw[black, ultra thick, -](.75,-2.5)--(1,-2);
     
          \draw[black, ultra thick, ->](3.5,-3)--(3.25,-2.5);
     \draw[black, ultra thick, -](3.25,-2.5)--(3,-2);
     \draw[black, ultra thick, ->](2.5,-3)--(2.75,-2.5);
     \draw[black, ultra thick, -](2.75,-2.5)--(3,-2);
     
       \draw[fill](0,0) circle (2pt);
       \draw[fill](2,-1) circle (2pt);
       \draw[fill](-2,-1) circle (2pt);
       \draw[fill](-3,-2) circle (2pt);
       \draw[fill](-1,-2) circle (2pt);
        \draw[fill](1,-2) circle (2pt);
      \draw[fill](3,-2) circle (2pt);
              \draw[fill](-3.5,-3) circle (2pt);
       \draw[fill](-2.5,-3) circle (2pt);
              \draw[fill](-1.5,-3) circle (2pt);
       \draw[fill](-.5,-3) circle (2pt);
              \draw[fill](.5,-3) circle (2pt);
       \draw[fill](1.5,-3) circle (2pt);
              \draw[fill](2.5,-3) circle (2pt);
       \draw[fill](3.5,-3) circle (2pt);             

     \node at (-1.3,-.1){$1\over 2$};
     \node at (1.3,-.1){$1\over 2$};
     \node at (-2.9,-1.3){$1\over 4$};
     \node at (-1.1,-1.3){$1\over 4$};
     \node at (1.1,-1.3){$1\over 4$};
     \node at (2.9,-1.3){$1\over 4$};
     \node at (-3.7,-2.4){$1\over 8$};
     \node at (-2.3,-2.4){$1\over 8$};
     \node at (-0.3,-2.4){$1\over8$};
     \node at (-1.7,-2.4){$1\over 8$};
     \node at (3.7,-2.4){$1\over 8$};
     \node at (2.3,-2.4){$1\over 8$};
     \node at (0.3,-2.4){$1\over8$};
     \node at (1.7,-2.4){$1\over 8$};
     
     \node at (-3.5,-3.3){$\vdots$};
          \node at (-2.5,-3.3){$\vdots$};
               \node at (-1.5,-3.3){$\vdots$};
                    \node at (-.5,-3.3){$\vdots$};
                         \node at (3.5,-3.3){$\vdots$};
                              \node at (2.5,-3.3){$\vdots$};
                                   \node at (1.5,-3.3){$\vdots$};    
                                    \node at (.5,-3.3){$\vdots$};

\end{tikzpicture} 
\caption{\label{fig:full}The full infinite binary subtree of $\Gamma$ rooted at $x$. The labels on edges denote the values of $\omega'_e$.}        
\end{figure}

It follows from the assumptions that $\Gamma$ contains a subgraph $\Gamma'$ with a model $G'$ which is a full, infinite, binary tree rooted at some vertex $x$. Clearly $C$ is an upper bound for the edge lengths in $G'$ as well. We fix an orientation on $G'$ such that all edges are directed towards the root $x$. We define $\omega' \in C^1(G' , \RR)$ by $\omega'_e = 2^{-{\rm level}(e^-)}$ (see Figure~\ref{fig:full}).
Consider its class in $\Omega^1(\Gamma')$, denoted again by $\omega'$.  The extension by zero of $\omega'$ gives a form $\omega \in \Omega^1(\Gamma)$ which has the property $d^*\omega=\delta_x$. Therefore the condition in Lemma \ref{lem:anyall}~(i) is satisfied for $\Gamma$.
\end{proof} 

\begin{Remark}
One can alternatively prove Proposition~\ref{prop:transient} using a theorem of Lyons \cite{Lyons} (see also \cite[Chapter 2]{LP:book}).
\end{Remark}

\subsubsection{Universal covers and the corresponding limiting measures}
\label{sec:ucover1}
Let $\Gamma$ be a compact metric graph with $g(\Gamma) \geq 2$, and let $\phi \colon \Gamma'\rightarrow\Gamma$ be the universal cover. 

  Let $G'$ be a model for $\Gamma'$ compatible with a model $G$ for $\Gamma$, so $V(G')$ is the preimage of $V(G)$ under the covering map. Because $\Gamma'$ is a universal cover, $G'$ is a tree, hence every edge of $G'$ must be a bridge. Given any directed edge $e'$ in $G'$ which is a lift of a directed edge $e$ in $G$, let $T(e')$ be the connected component of $G'$ minus the interior of $e'$, which contains ${e'}^+$ (see Figure~\ref{fig:univT}).

  \begin{figure}[H]
\begin{tikzpicture}[scale=.75]
\draw[black, ultra thick, -](.5,0)--(1,0);
\draw[black, ultra thick, ->](0,0)--(.5,0);
\draw[dotted](-1,0) circle (1);
\node at (.5,0.5){$e'$};
\draw[dotted](2,0) circle (1);
\node at (-1,0){$T(\overline{e}')$};
\node at (+2,0){$T(e')$};
\end{tikzpicture}
\caption{\label{fig:univT} The subgraph $T(e')$ corresponding to $e'$ in the universal cover.}
\end{figure}
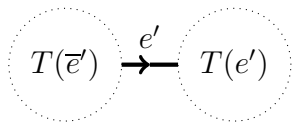 

  By the definition of covering map, different choices of the lift of $e$ result in isometric metric graphs $T(e')$. By Proposition~\ref{prop:transient}, we know $T(e')$ is not transient if and only if it is compact.  
  \begin{Definition}\label{def:R}
  Let $R \colon \EE(G) \rightarrow \RR^{>0} \cup \{+\infty \}$ be the function defined as follows:
  \begin{equation}\label{eq:Runiv}
  R(e) =
  \begin{cases}
+\infty &\text{, if $T(e')$ is not transient,}\\
\inf{\| \omega \|^2} &\text{, if $T(e')$ is transient.}\\
\end{cases}  
  \end{equation}
  Where the infimum is taken over all $\omega\in C^1_{L^2}(T(e'),\RR) \subset C^1_{L^2}(G',\RR)$ with $d^*\omega=\delta_{{e'}^+}$.

       \end{Definition}
\begin{Remark} \phantomsection \label{rmk:Rprop}
\begin{itemize}
\item[]
\item[(i)] The value of $R(e)$ is well-defined because different choice of the lift $e'$ result in isometric $T(e')$, the value of $R(e)$ is independent of the choice of the lift $e'$.
\item[(ii)] When $T(e')$ is transient, the infimum in \eqref{eq:Runiv} is always attained by a unique $\nu_{e'} \in C^1_{L^2}(G',\RR)$, so $R(e) = \|\nu_{e'}\|^2$. This is because $d$ is a bounded operator (see the proof of Theorem~\ref{thm:GaussBonnet}) hence so is $d^*$, and $d^*\omega=\delta_{{e'}^+}$ defines an affine closed subspace of $C^1_{L^2}(T(e'),\RR)$. By the Hilbert projection theorem, on any affine closed subspace of a Hilbert space, there is a unique point for which the norm is minimized.
\item[(iii)] The value $R(e)$ has the following electrical network interpretation:     in the case that $T(e')$ is not compact, identify all of its ends with a single point at infinity $p_\infty$. Then $R(e)$ is the effective resistance between $e'^+$ and $p_\infty$. This intuition comes from ``Thompson's principle'' which states the electrical current is the unique flow that minimizes the energy functional. Using a similar interpretation, one could extend some results on universal covers (e.g. Theorem~\ref{cal1}) to all {\em transient} covers.
\item[(iv)] In Theorem~\ref{cal2}, we will give {\em algebraic equations}, in terms of the finite graph $G$, that one can use to compute the values $R(e)$.
\end{itemize}
\end{Remark}

Our next result gives a formula for the canonical measure on the universal cover $\Gamma'$, and its pushdown on $\Gamma$. The formula \eqref{conc1} should be compared with \eqref{eq:zhang}.

\begin{Theorem}\label{cal1}
	Let $\Gamma$ be a compact metric graph with $g(\Gamma) \geq 2$. Let $\phi \colon \Gamma'\rightarrow\Gamma$ be the universal covering map. Let $\mu_{{\phi, \can}}$ be the pushdown of the canonical measure of $\Gamma'$, and let $G$ be a model for $\Gamma$. For $e \in \EE(G)$ we have 
\begin{equation}\label{conc1}
\mu_{\phi, \can}|_e=\frac{1}{\mathscr{S}(e)+\ell(e)} \, dx \, ,
\end{equation}
where $\mathscr{S}(e) = R(e)+R(\overline{e})$.
\end{Theorem}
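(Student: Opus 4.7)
The plan is to explicitly compute the orthogonal projection $\pi(de')$ of $de'$ onto $\mathcal{H}_{L^2}(\Gamma')$ for a lift $e'$ of $e$, and then invoke Definition~\ref{def:2} to read off $\mu_{\can}^{\Gamma'}(e')$. Since $\Gamma'$ is a tree, the edge $e'$ is a bridge whose removal separates $\Gamma'$ into $T(\bar e')$ and $T(e')$. The starting observation is that when both subtrees are transient, the minimal $L^2$ unit currents $\nu_{e'}\in C^1_{L^2}(T(e'),\RR)$ and $\nu_{\bar e'}\in C^1_{L^2}(T(\bar e'),\RR)$ supplied by Remark~\ref{rmk:Rprop}(ii) are, by the Hilbert projection theorem applied to the closed affine sets $\{\omega:d^*\omega=\delta_{(e')^+}\}$ and $\{\omega:d^*\omega=\delta_{(e')^-}\}$, orthogonal to the direction subspaces $\mathcal{H}_{L^2}(T(e'))$ and $\mathcal{H}_{L^2}(T(\bar e'))$ respectively.

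I first dispose of the degenerate case $R(e)=+\infty$ (or $R(\bar e)=+\infty$) by showing that every $\alpha\in\mathcal{H}_{L^2}(\Gamma')$ satisfies $\alpha_{e'}=0$, whence $\mu_{\can}^{\Gamma'}(e')=0$ by Proposition~\ref{prop:mcmul}, matching the stated formula with $\mathscr{S}(e)=+\infty$. Indeed, restricting $\alpha$ to $T(e')$ and inspecting divergences at the sole affected cut vertex gives $d^*(\alpha|_{T(e')})=-\alpha_{e'}\delta_{(e')^+}$; if $\alpha_{e'}\neq 0$ then $-\alpha|_{T(e')}/\alpha_{e'}$ would exhibit an $L^2$ form on $T(e')$ realizing $\delta_{(e')^+}$, contradicting non-transience.

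For the transient case, I set $\omega := de' - \nu_{e'} + \nu_{\bar e'}$, extending $\nu_{e'}$ and $\nu_{\bar e'}$ by zero off their support. A vertex-by-vertex divergence check at $(e')^+$, $(e')^-$, and elsewhere shows $d^*\omega=0$, so $\omega\in\mathcal{H}_{L^2}(\Gamma')$. For any $\alpha\in\mathcal{H}_{L^2}(\Gamma')$, the form $\alpha|_{T(e')}+\alpha_{e'}\nu_{e'}$ is $L^2$-harmonic on $T(e')$, hence orthogonal to $\nu_{e'}$, yielding $\langle\nu_{e'},\alpha\rangle=-\alpha_{e'}R(e)$. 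The companion computation on the $\bar e'$-side (using $\alpha_{\bar e'}=-\alpha_{e'}$) gives $\langle\nu_{\bar e'},\alpha\rangle=+\alpha_{e'}R(\bar e)$. Combined with $\langle de',\alpha\rangle=\alpha_{e'}\ell(e)$, this produces $\langle\omega,\alpha\rangle=\alpha_{e'}(\ell(e)+\mathscr{S}(e))$, so the rescaled form
\[
\pi(de') \;=\; \frac{\ell(e)}{\ell(e)+\mathscr{S}(e)}\,\omega
\]
lies in $\mathcal{H}_{L^2}(\Gamma')$ and satisfies $\langle de'-\pi(de'),\alpha\rangle=0$ for every $\alpha\in\mathcal{H}_{L^2}(\Gamma')$, so it is the orthogonal projection. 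Reading off its $e'$-coefficient (the $\nu$'s vanish on $e'$) gives $\mu_{\can}^{\Gamma'}(e')=\ell(e)/(\ell(e)+\mathscr{S}(e))$, and dividing by $\ell(e')=\ell(e)$ and pushing down yields the asserted density $(\mathscr{S}(e)+\ell(e))^{-1}\,dx$.

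The main obstacle is the orthogonality assertion $\nu_{e'}\perp\mathcal{H}_{L^2}(T(e'))$ on an infinite tree: this is where transience and the Hilbert projection theorem on the closed affine subspace of $L^2$ unit currents enter essentially, and it is what fuels the clean telescoping $\langle\nu_{e'}-\nu_{\bar e'},\alpha\rangle=-\alpha_{e'}\mathscr{S}(e)$. A secondary subtlety is the sign bookkeeping at the cut vertices when computing $d^*(\alpha|_{T(e')})$ and its analogue on $T(\bar e')$, which relies on the orientation conventions $\alpha_{\bar e'}=-\alpha_{e'}$ and $d^*(de')=\delta_{(e')^+}-\delta_{(e')^-}$.
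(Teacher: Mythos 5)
Your argument is correct, and it reaches the formula by a somewhat different mechanism than the paper, although both proofs revolve around the same two objects: the minimal-norm unit currents $\nu_{e'},\nu_{\bar e'}$ of Remark~\ref{rmk:Rprop}~(ii) and the harmonic form $de'-\nu_{e'}+\nu_{\bar e'}$. The paper works through the extremal characterization of Proposition~\ref{prop:mcmul}: it invokes convex duality to recast the maximizer of $|\omega(e')|^2/\|\omega\|^2$ as the norm-minimizer among harmonic forms with prescribed value on $e'$, shows $\|de'-\nu_{e'}+\nu_{\bar e'}\|^2=\ell(e)+R(e)+R(\bar e)$ by disjointness of supports, and concludes by uniqueness of minimizers. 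You instead verify directly from Definition~\ref{def:2} that $\frac{\ell(e)}{\ell(e)+\mathscr{S}(e)}\bigl(de'-\nu_{e'}+\nu_{\bar e'}\bigr)$ \emph{is} the orthogonal projection $\pi(de')$, the key input being the pairing identity $\langle\nu_{e'},\alpha\rangle=-\alpha_{e'}R(e)$ for all $\alpha\in\mathcal{H}_{L^2}(\Gamma')$, which you extract from the Hilbert projection theorem (minimal-norm point of the closed affine set $\{d^*\omega=\delta_{(e')^+}\}$ is orthogonal to its direction space $\mathcal{H}_{L^2}(T(e'))$) together with the divergence bookkeeping $d^*(\alpha|_{T(e')})=-\alpha_{e'}\delta_{(e')^+}$; both sign computations check out. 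What your route buys is that Proposition~\ref{prop:mcmul}, the duality step, and the uniqueness-of-minimizer argument are not needed: the projection is exhibited and certified by a single orthogonality check, and the coefficient of $de'$ is read off immediately. Your treatment of the degenerate case is also a little more self-contained: you show every $L^2$ harmonic form vanishes on $e'$ directly from the definition of transience, whereas the paper routes this through the equivalence ``non-transient iff compact'' (Proposition~\ref{prop:transient}) and Lemma~\ref{lemma:bridge}; either is fine. The paper's approach, on the other hand, makes the variational interpretation of $\mu_{\can}$ (the Bergman-type sup) explicit, which is conceptually aligned with the Riemann-surface analogy it is pursuing.
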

\begin{proof}	
If either $T(e')$ or $T(\overline{e}')$ is not transient then $\mathscr{S}(e)=+\infty$. But in this case $\mu_{\phi, \can}(e)=0$ by Lemma~\ref{lemma:bridge}.
So we may assume that both $T(e')$ or $T(\overline{e}')$ are transient. Let $\nu_{e'}$ and $\nu_{\overline{e}'}$ be as in Remark~\ref{rmk:Rprop}~(ii), so $R(e) = \|\nu_{e'}\|^2$ and $R(\overline{e}) = \|\nu_{\overline{e}'}\|^2$.
 
Let $\omega\in \mathcal{H}_{L^2}(\Gamma')$ with $\|\omega\| = 1$ be the form attaining the maximum in \eqref{eq:mc2} (Proposition~\ref{prop:mcmul}), so $\ell(e)\mu_{can}^{\Gamma'}(e')={|\omega(e')|^2}$. By elementary convex duality, this $\omega$ has the smallest possible norm among all harmonic forms $\alpha \in \mathcal{H}_{L^2}(\Gamma')$ with $|\alpha(e)| = |\omega(e)|$:
\[
\|{\|\omega\|\over \|\alpha\|}\alpha\|=\|\omega\| \  \implies \  |{\|\omega\|\over \|\alpha\|}\alpha(e')|\leq |\omega(e')| \  \implies \  {\|\omega\|\over \|\alpha\|}\leq {|\omega(e')|\over|\alpha(e')|}=1 \, .
\]

Let $\beta = de'-\nu_{e'}+\nu_{\overline{e}'}$. We have:
\begin{itemize}
\item $\beta$ is harmonic on $\Gamma'$ because
$d^*( de'-\nu_{e'}+\nu_{\overline{e}'} ) = (\delta_{e'^+} - \delta_{e'^-}) - (\delta_{e'^+}) + (\delta_{\overline{e}'^+}) = 0$.
\item $\beta$ has the smallest norm between all harmonic forms $\alpha \in \mathcal{H}_{L^2}(\Gamma')$ with $\alpha(e) = \beta(e') = \ell(e')$. This is because $\|\beta\|^2 = \|de'\|^2 + \| \nu_{e'}\|^2 +\|\nu_{\overline{e}'}\|^2$.

\end{itemize}

By the uniqueness of the minimizers, we conclude 
\[
\omega = \frac{|\omega(e')|}{|\beta(e')|} \, \beta 
\]
and, therefore
\[
1 = \|\omega\|^2 = \frac{|\omega(e')|^2}{|\beta(e')|^2} \|\beta\|^2 = \frac{\ell(e)\mu_{can}^{\Gamma'}(e')}{\ell(e)^2} (\ell(e) + \mathscr{S}(e)) \, .
\]
We conclude
$\mu_{can}^{\Gamma'}(e') = {\ell(e)}/{(\ell(e) + \mathscr{S}(e))}$,
which completes the proof.
\end{proof}

\begin{Theorem}\label{cal2}
Let $G$ be a finite metric graph. Then the function $R$ in Definition~\ref{def:R} satisfies: 
	\begin{equation}\label{conc2}
	{1\over R(e)}=\sum_{e_i \in S_e}{1\over \ell(e_i)+R(e_i)}
	\end{equation}
	for all $e \in \EE(G)$, where $S_e = \{e_i \in \EE(G) \colon e_i \ne \overline{e} , e_i^-=e^+\}$.  

\end{Theorem}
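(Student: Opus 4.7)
The plan is to view \eqref{conc2} as the classical parallel--series rule for effective resistance, now expressed at the root of the hanging tree $T(e')$, and to extract it from the $L^2$ minimization in Definition~\ref{def:R}. Fix a lift $e' \in \EE(G')$ of $e$. Since the interior of $e'$ is removed, the edges of $T(e')$ meeting its root $e'^+$ are precisely the lifts $e_i' \in \EE(G')$ of the edges $e_i \in S_e$; orient each so that $(e_i')^- = e'^+$, and observe that $T(e')$ is then the gluing of $e'^+$, the edges $e_i'$, and the further hanging trees $T(e_i')$ along the vertices $(e_i')^+$.

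Given an admissible competitor $\omega \in C^1_{L^2}(T(e'),\RR)$ with $d^*\omega = \delta_{e'^+}$, set $y_i = -\omega_{e_i'}$ and $\omega_i = \omega|_{T(e_i')}$. Applying \eqref{adj} at $e'^+$ and at each $(e_i')^+$ yields the current-conservation relations
\[
\sum_i y_i \;=\; 1, \qquad d^*\omega_i \;=\; y_i \,\delta_{(e_i')^+}.
\]
When $y_i \ne 0$, the rescaled form $\omega_i/y_i$ is admissible in the definition of $R(e_i)$, giving $\|\omega_i\|^2 \geq y_i^2 R(e_i)$; with the convention $0 \cdot \infty = 0$, this also holds for $y_i = 0$ and forces $y_i = 0$ whenever $R(e_i) = +\infty$. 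Summing energies over the edges $e_i'$ (using $\ell(e_i') = \ell(e_i)$) and the hanging subtrees yields
\[
\|\omega\|^2 \;=\; \sum_i y_i^2\,\ell(e_i) \;+\; \sum_i \|\omega_i\|^2 \;\geq\; \sum_i y_i^2\,\bigl(\ell(e_i) + R(e_i)\bigr).
\]

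The finite-dimensional minimization of $\sum_i y_i^2(\ell(e_i)+R(e_i))$ subject to $\sum_i y_i = 1$ is immediate from Lagrange multipliers (or Cauchy--Schwarz): the minimum equals $c = \bigl(\sum_i (\ell(e_i)+R(e_i))^{-1}\bigr)^{-1}$, attained uniquely at $y_i^{*} = c/(\ell(e_i)+R(e_i))$. This gives $R(e) \geq c$. The matching upper bound is obtained by the explicit candidate $\omega_{e_i'} = -y_i^{*}$ and $\omega|_{T(e_i')} = y_i^{*}\,\nu_{e_i'}$, where $\nu_{e_i'}$ is the unique minimizer from Remark~\ref{rmk:Rprop}~(ii) (with both $y_i^{*}$ and the restriction set to $0$ when $R(e_i) = +\infty$). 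A direct check from \eqref{adj} confirms $d^*\omega = \delta_{e'^+}$, $\omega \in L^2$, and $\|\omega\|^2 = c$, hence $R(e) = c$ and \eqref{conc2} follows by inversion.

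The one genuine subtlety is the boundary case $R(e) = +\infty$, where \eqref{conc2} must read $0 = 0$. Here every $T(e_i')$ must itself fail to be transient: otherwise the form obtained by placing $\nu_{e_i'}$ on a single transient $T(e_i')$ together with unit flow on $e_i'$ (and zero elsewhere) would be a finite-energy $L^2$ witness of transience for $T(e')$ itself, contradicting $R(e) = +\infty$. Thus all $R(e_i) = +\infty$ and both sides of \eqref{conc2} vanish. I expect the main technical obstacle to be precisely this uniform bookkeeping: keeping the sign and orientation conventions in \eqref{adj} consistent while simultaneously handling the finite and $+\infty$ values of $R(e_i)$ so that the formula reads correctly in every case; the underlying analytic content is just the Hilbert projection theorem (for the uniqueness of $\nu_{e_i'}$) combined with an elementary finite-dimensional minimization.
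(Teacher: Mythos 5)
Your proof is correct. It shares with the paper's argument the same decomposition of $T(e')$ at its root into the edges $e_i'$ and the hanging trees $T(e_i')$, and the same concluding step (minimizing $\sum_i y_i^2\bigl(\ell(e_i)+R(e_i)\bigr)$ subject to $\sum_i y_i=1$ via Cauchy--Schwarz), but you carry out the reduction to that finite-dimensional problem differently. The paper works with the actual minimizer $\nu_{e'}$: it argues that its restriction to each branch is harmonic and, by uniqueness of norm minimizers among harmonic forms with a prescribed value on $e_i'$, proportional to $de_i'-\nu_{e_i'}$, whence $\nu_{e'}=\sum_i a_i(de_i'-\nu_{e_i'})$ with $\sum_i a_i=1$. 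You never identify the structure of the minimizer: you bound the energy of an \emph{arbitrary} competitor from below by restricting to each branch and rescaling ($\|\omega_i\|^2\ge y_i^2R(e_i)$), and you match this with an explicit glued competitor, giving a two-sided bound. This buys two things: it avoids the paper's rather terse uniqueness claim for constrained harmonic minimizers on the branches (the underlying analytic input in your version is only the Hilbert projection theorem, cited through Remark~\ref{rmk:Rprop}), and it treats the degenerate branches with $R(e_i)=+\infty$, as well as the case $R(e)=+\infty$ where \eqref{conc2} reads $0=0$, explicitly --- a case the paper's proof passes over, since $\nu_{e_i'}$ does not exist for a non-transient branch. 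Your sign and orientation bookkeeping with \eqref{adj} (the relations $\sum_i y_i=1$, $d^*\omega_i=y_i\delta_{(e_i')^+}$, and the verification that the glued form satisfies $d^*\omega=\delta_{e'^+}$ with $\|\omega\|^2=c$) checks out against the paper's conventions.
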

\begin{Remark}
\begin{itemize}
\item[]
\item[(i)]  One could alternatively prove Theorem~\ref{cal1} and Theorem~\ref{cal2} using \cite[Theorem 4.9]{aminiFactorials}. One could also deduce from \cite[Theorem 4.9]{aminiFactorials} that (35) has a positive solution. 
\item[(ii)]  We expect \eqref{conc2} will always have a {\em unique} positive solution. The intuition is that \eqref{conc2} may be thought of as ``parallel laws'' in the infinite electrical network given by the universal cover. Since the universal cover is transient, presumably there is a well-behaved electrical network theory. So we expect that parallel laws should determine all effective resistances uniquely. We have, however, not made this intuition precise. It is not immediately clear whether the uniqueness statement follows from \cite[Theorem 4.9]{aminiFactorials}.
\end{itemize}

\end{Remark}
\begin{proof} 
As before, let $e'$ denote an arbitrary lift of $e$ to the universal cover. Let $\{e'_i \colon 1 \leq i \leq p \}$ be the oriented edges in $T(e')$ whose initial vertex is $e'^+$.The tree $T(e')$ can be decomposed into the union of $e'_i$'s and $T(e'_i)$'s (see Figure~\ref{fig:decompT}).
  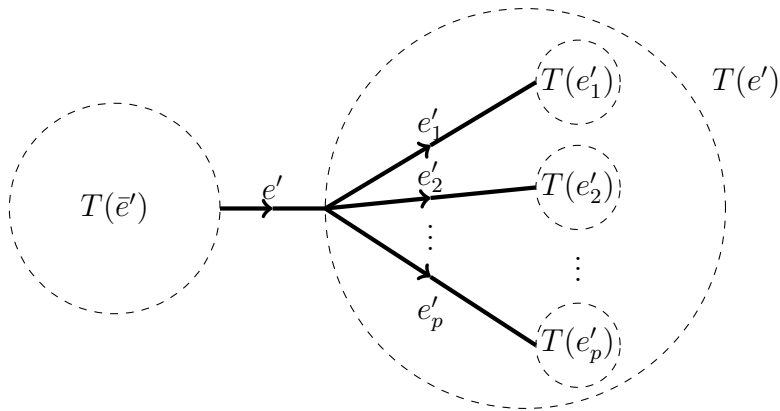
\begin{figure}[H]
   \begin{tikzpicture}[scale=1.4]
     \draw[black, ultra thick, ->](1,0)--(1.5,0);
     \draw[black, ultra thick, -](1.5,0)--(2,0);
     \node at (1.5,0.2){$e'$};
     \draw[dashed] (3.9,0) circle(1.9);
     \draw[black, ultra thick, ->](2,0)--(3,.6);
     \draw[black, ultra thick, -](3,.6)--(4,1.2);
     \draw[black, ultra thick, ->](2,0)--(3,0.1);
     \draw[black, ultra thick, -](3,0.1)--(4,.2);
     \draw[black, ultra thick, ->](2,0)--(3,-.65);
     \draw[black, ultra thick, -](3,-.65)--(4,-1.3);
     \node at (3, 0.8){$e'_1$};
     \node at (3, .3){$e'_2$};
     \node at (3, -1){$e'_p$};
     \node at (3, -.2){$\vdots$};
     \node at (6, 1.2){$T(e')$};
     \draw[dashed] (4.4, 1.2)circle(0.4);
     \draw[dashed] (4.4, 0.2)circle(0.4);
     \draw[dashed] (4.4, -1.3)circle(0.4);
     \node at (4.4, 1.2){$T(e'_1)$};
     \node at (4.4, .2){$T(e'_2)$};
     \node at (4.4, -1.3){$T(e'_p)$};
          \node at (4.4, -.5){$\vdots$};
         \node at (0, 0){$T(\bar{e}')$}; 
           \draw[dashed] (0, 0) circle(1);
\end{tikzpicture}         
\caption{\label{fig:decompT} A decomposition of $T(e')$.}
\end{figure}

Let $\nu_{e'}$ and $\nu_{e'_i}$ be as in Remark~\ref{rmk:Rprop}~(ii), so $R(e) = \|\nu_{e'}\|^2$ and $R(e_i) = \|\nu_{e'_i}\|^2$ for all $e_i \in S_e$.
Let $\omega_i$ be the restriction of $\nu_{e'}$ to the connected component of $T(e') \backslash e'^+$ containing $T(e'_i)$. Let $\beta_i = de'_i-\nu_{e'_i}$. Both $\omega_i$ and $\beta_i$ are harmonic on the connected component of $T(e') \backslash e'^+$ that contains $T(e'_i)$. They both are the unique norm minimizers among harmonic forms with a prescribed valuations on $e'_i$. Therefore we must have $\omega_i = a_i \beta_i$ for some $a_i \in \RR$.
We conclude:
	\[\nu_{e'} = \omega_i = \sum a_i \beta_i = \sum a_i (de'_i-\nu_{e'_i}) \, .\]
	The condition that $d^*\nu_{e'}=\delta_{e'^+}$ implies that $\sum a_i=1$. Finally, we have:
	\[R(e)= \min_{\sum a_i=1}\|\sum a_i (de'_i-\nu_{e'_i})\|^2  
	= \min_{\sum a_i=1}\sum_i a_i^2\left(\ell(e_i)+R(e_i)\right)
	=\left({\sum{1\over \ell(e_i)+R(e_i)}}\right)^{-1} \, .\]
	The last step is by the Cauchy--Schwarz inequality.
\end{proof}

We finish this section with the following amusing example, which shows that the induced ``hyperbolic measure'' on very simple metric graphs can be highly nontrivial.

\begin{Example}\label{ex:amusing}
 Consider the banana graph as in Figure~\ref{fig:hyperbolic.ex} (left). We use Theorem~\ref{cal1} and Theorem~\ref{cal2} to compute the measure induced from the canonical measure on the universal cover. 
\begin{figure}[h!]
$$
\begin{xy}
(0,0)*+{
	\scalebox{.9}{$
	\begin{tikzpicture}
	\draw[black, ultra thick, -] (0,1.2) to [out=-45,in=90] (.6,-.05);
	\draw[black, ultra thick] (.6,0) to [out=-90,in=45] (0,-1.2);
	\draw[black, ultra thick, -] (0,1.2) to [out=-135,in=90] (-.6,-.05);
	\draw[black, ultra thick] (-.6,0) to [out=-90,in=135] (0,-1.2);
	\draw[black, ultra thick, -] (0,1.2) -- (0,0);
	\draw[black, ultra thick] (0,0.1) -- (0,-1.2);
	\fill[black] (0,1.2) circle (.1);
	\fill[black] (0,-1.2) circle (.1);
	\end{tikzpicture}
	$}
};
(-8,0)*+{\mbox{{\smaller $2$}}};
(-2,0)*+{\mbox{{\smaller $1$}}};
(8,0)*+{\mbox{{\smaller $1$}}};
(-6,7)*+{\mbox{{\smaller $e_1$}}};
(2.4,4)*+{\mbox{{\smaller $e_2$}}};
(6,7)*+{\mbox{{\smaller $e_3$}}};

\end{xy}
\ \ \ \ \ \ \ \ \ \ 
\begin{xy}
(0,0)*+{
	\scalebox{.9}{$
	\begin{tikzpicture}
	\draw[black, ultra thick, -] (0,1.2) to [out=-45,in=90] (.6,-.05);
	\draw[black, ultra thick] (.6,0) to [out=-90,in=45] (0,-1.2);
	\draw[black, ultra thick, -] (0,1.2) to [out=-135,in=90] (-.6,-.05);
	\draw[black, ultra thick] (-.6,0) to [out=-90,in=135] (0,-1.2);
	\draw[black, ultra thick, -] (0,1.2) -- (0,0);
	\draw[black, ultra thick] (0,0.1) -- (0,-1.2);
	\fill[black] (0,1.2) circle (.1);
	\fill[black] (0,-1.2) circle (.1);
	\end{tikzpicture}
	$}
};
(-8,0)*+{\mbox{{\smaller $\ell_1$}}};
(2.5,0)*+{\mbox{{\smaller $\ell_2$}}};
(8.8,0)*+{\mbox{{\smaller $\ell_3$}}};
\end{xy}
\ \ \ \ \ \ \ \ 
\!\!\!\!\!\!\!\!\!\!\!\!\!\!\!\!\!\!\!
$$
\caption{\label{fig:hyperbolic.ex}
  Left: A metric graph $\Gamma$.
  Right: the associated ``hyperbolic'' lengths on $\Gamma$ will assign $\ell_1 = {(11-\sqrt{41})}/{10}$ and $\ell_2=\ell_3 = {(\sqrt{41}-1)}/{20}$.}
\end{figure}
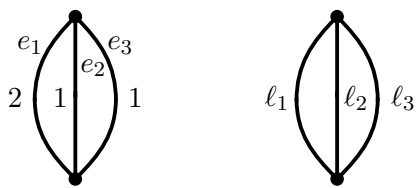
By symmetry, for every oriented edge $e$ we have $R(e) = R(\bar{e})$. Let $x_i=R(e_i) = R(\bar{e}_i)$. By \eqref{conc2} we need to solve the following equations:
\[
{1\over x_1}={1\over 1+x_2}+{1\over 1+x_3} \ , \ 
{1\over x_2}={1\over 2+x_1}+{1\over 1+x_3} \ , \ 
{1\over x_3}={1\over 2+x_1}+{1\over 1+x_2} \, .
\]
With a routine computation, one finds this system has only one positive solution:
\[
x_1={(3+\sqrt{41})}/{8} 
\ , \ 
x_2 = x_3 = {(\sqrt{41}-1)}/{4} \, .
\]
By \eqref{conc1} we compute: 
\[
\mu_{\phi, \can}(e_1) = {(11-\sqrt{41})/ 10}
\ , \
\mu_{\phi, \can}(e_2) = \mu_{\phi, \can}(e_3) = {(\sqrt{41}-1)/ 20} \,.
\]
\end{Example}

\subsubsection{Relation to Poisson--Jensen and equilibrium measures} \label{sec:PJEquil}
There is a relationship between the measure inherited from the canonical measure on the universal cover, and the Poisson-Jensen (and equilibrium) measure. 

Let $\Gamma'$ be the universal cover of compact metric graph $\Gamma$. 
Let $\partial \Gamma'$ denote the {\em ends} (Gromov boundary) of $\Gamma'$, and let $\hat{\Gamma}' = \Gamma' \cup \partial \Gamma'$ denote the {end compactification} of $\Gamma'$.

We want to define, for any $x \in \Gamma'$, a Radon measure $\mu_x$ on $\partial \Gamma' = \hat{\Gamma}' \backslash \Gamma'$. It suffices to describe $\mu_x$ on a base for the topology on $\partial \Gamma'$. This is because any Borel probability measure on a locally compact Hausdorff space with a countable base for its topology is regular. 

Let $G'$ be a model for $\Gamma'$. For $e' \in \EE(G')$, let $T(e')$ be as defined in \S\ref{sec:ucover1}. Let $\partial T(e')$ denote the {\em ends} of $T(e')$. Then $\{\partial T(e') \colon e'\in \EE(G') \}$ gives a base for the topology on $\partial \Gamma'$.

Let $\omega_x \in C^1_{L^2}(\Gamma', \RR)$ be the unique element that minimizes the norm on the affine subspace $\{\alpha \colon d^*\omega=\delta_x \}$. The existence and uniqueness of $\omega_x$ follows from the Hilbert projection theorem.

\begin{Definition} \label{def:pj}
Let $\mu_x$ be the probability measure on $\partial \Gamma'$ defined by 
\[
\mu_x(\partial T(e'))=(\omega_x)_{e'} = \frac{\omega_x(e')}{\ell(e')}
\]
for all $e' \in \EE(G)$. 
\end{Definition}

\begin{Remark}
The measure $\mu_x$ defined above can be interpreted as a {\em Poisson-Jensen measure} and as an {\em equilibrium measure}:

\begin{itemize}
\item[(i)] The measure $\mu_x$ is the {\em Poisson--Jensen measure} for $(\Gamma' , \partial \Gamma')$; it is the unique measure supported on $\partial \Gamma'$ satisfying: 
\begin{itemize}
\item[(a)] $\mu_x$ depends only on $x$, and 
\item[(b)] for any harmonic function $f$ on $\Gamma'$ that extends continuously to $\partial \Gamma'$ and $df$ is $L^2$ integrable, and for any $x \in \Gamma$ we have
\[
f(x) = \int_{\partial \Gamma'} f d\mu_x \, .
\]
\end{itemize}
\item[(ii)]  The measure $\mu_x$ is the {\em equilibrium measure} for $\partial \Gamma'$ relative to $x$; it is the unique measure satisfying: 
\begin{itemize}
\item[(a)] $\mu_x$ is a Borel probability measure, and 
\item[(b)] among all $\nu$ Borel probability measures on $\partial{\Gamma}'$, the measure $\mu_x$ it is the unique measure minimizing the {\em energy integral}:
\[
I_x(\nu) = \iint_{\partial{\Gamma}' \times \partial{\Gamma}'} {\rm dist}_x( y, z) d\nu(y)  d\nu(z) \, ,
\]
where ${\rm dist}_x(y, z) = (y, z)_x$ is the usual {\em Gromov product} on the metric space $\hat{\Gamma}'$, defined as the length of the intersection of the geodesic rays from $x$ to $y$ and from $x$ to $z$.
\end{itemize}
\end{itemize}
We do not use these interpretations, so we skip the (straightforward) proofs. See also \cite{aminiFactorials} for an interesting connection to generalized factorial sequences.
\end{Remark}

Our final result relates the canonical measure on universal covers to the measure $\mu_x$ of Definition~\ref{def:pj}. For a signed measure $\mu$, the total variation of $\mu$ is denoted by $|\mu|$. 
\begin{Proposition}\label{PJ} 
	Let $\Gamma'$ be the universal cover of a compact metric graph $\Gamma$ with $g(\Gamma) \geq 2$. For $x \in \Gamma'$, let $\mu_x$ be the measure on $\partial \Gamma'$ as in Definition~\ref{def:pj}. Then
	\begin{itemize}
	\item[(a)] Let $e$ be an edge in $\Gamma$ and $e'$ be a lift of $e$ to $\Gamma'$. Then
	\[
	\mu_{\phi, \can} (e) = \frac{1}{2} |\mu_{e'^+} - \mu_{e'^-} | (\partial \Gamma') \, .
	\]
	\item[(b)] For any line segment $e' \in \Gamma'$ and $x \in e'$ we have 
	\[\mu_{can}^{\Gamma'}|_{e'}  = \left({1\over 2}\left|{d\over dx}\mu_x\right|(\partial \Gamma')\right)dx \, .\]
 Here ${d\over dx}\mu_x$ denotes the the signed measure $\lim_{\delta\rightarrow 0}{1\over\delta}(\mu_{x+\delta \mathbf{v}}-\mu_x)$, where $\mathbf{v}$ is any unit tangent direction at $x$.
 \end{itemize} 
\end{Proposition}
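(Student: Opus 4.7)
The plan is to compute both sides of (a) and (b) explicitly using the tree structure of $\Gamma'$, together with the quantities $R(e)$ and $\mathscr{S}(e) = R(e) + R(\bar{e})$ already appearing in Theorems~\ref{cal1} and \ref{cal2}. The core observation is this: if $x$ is a vertex of $\Gamma'$ and $f_1, \ldots, f_k$ are the oriented edges with $f_i^- = x$, then each subtree $T(f_i)$ is attached to the rest of $\Gamma'$ through the single edge $f_i$. By the Hilbert projection theorem applied on $T(f_i)$, the restriction $\omega_x|_{T(f_i)}$ must itself minimize $L^2$-norm among currents on $T(f_i)$ with its own divergence condition at $f_i^+$ (otherwise one could replace the restriction and decrease $\|\omega_x\|^2$, contradicting minimality). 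Hence $\omega_x|_{T(f_i)} = a_i\,\nu_{f_i}$ for scalars $a_i$ with $\sum_i a_i = 1$, and $\|\omega_x\|^2 = \sum_i a_i^2(\ell(f_i) + R(f_i))$; Cauchy--Schwarz then yields $a_i = (\ell(f_i)+R(f_i))^{-1} / \sum_j (\ell(f_j)+R(f_j))^{-1}$.

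For part (a), applying this at $x = e'^{\pm}$ gives the decompositions
\[
\omega_{e'^+} = (1-q)\nu_{e'} + q\,de' + q\,\nu_{\bar{e}'}, \qquad \omega_{e'^-} = -q'\nu_{e'} + q'\,de' + (1+q')\nu_{\bar{e}'},
\]
where minimization produces $q = R(e)/(\mathscr{S}(e) + \ell(e))$ and $q' = -R(\bar{e})/(\mathscr{S}(e) + \ell(e))$. Taking the difference, the restriction of $\omega_{e'^+}-\omega_{e'^-}$ to $T(e')$ is the positive multiple $(1-q+q')\,\nu_{e'}$, and on $T(\bar{e}')$ it is a negative multiple of $\nu_{\bar{e}'}$. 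Consequently the signed measure $\mu_{e'^+}-\mu_{e'^-}$, restricted to $\partial T(e')$, is $(1-q+q')$ times the harmonic measure $\mu^{T(e')}_{e'^+}$ (a positive probability measure), and on $\partial T(\bar{e}')$ it is a negative multiple of $\mu^{T(\bar{e}')}_{e'^-}$. These are the positive and negative parts of the Jordan decomposition, so the total variation is twice the mass on one side, giving
\[
\tfrac{1}{2}\bigl|\mu_{e'^+} - \mu_{e'^-}\bigr|(\partial \Gamma') = 1 - q + q' = \frac{\ell(e)}{\mathscr{S}(e) + \ell(e)} = \mu_{\phi,\can}(e),
\]
by Theorem~\ref{cal1}.

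For part (b), I fix $x \in e'$ at parameter $t$ and subdivide $e'$ at $x$ into $e'_1, e'_2$ of lengths $t$ and $\ell(e')-t$ (which is permitted since the canonical measure is compatible with any model). A short computation gives $R(e'_1) = R(e) + (\ell(e') - t)$, $R(\bar{e}'_1) = R(\bar{e})$, $R(e'_2) = R(e)$, $R(\bar{e}'_2) = R(\bar{e}) + t$, so applying the vertex decomposition above at the new valence-two vertex $x$ yields $\mu_x(\partial T(e'_2)) = (R(\bar{e})+t)/(\mathscr{S}(e)+\ell(e'))$ and $\mu_x(\partial T(\bar{e}'_1)) = (R(e)+\ell(e')-t)/(\mathscr{S}(e)+\ell(e'))$, both affine in $t$ with slopes $\pm 1/(\mathscr{S}(e)+\ell(e'))$. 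Exactly as in (a), $\mu_x$ restricted to $\partial T(e') = \partial T(e'_2)$ is a positive scalar multiple of $\mu^{T(e')}_{e'^+}$, hence $\tfrac{d}{dt}\mu_x$ restricted to $\partial T(e')$ is a positive measure of total mass $1/(\mathscr{S}(e)+\ell(e'))$, with a matching negative part on $\partial T(\bar{e}')$. Therefore $\tfrac{1}{2}|\tfrac{d}{dx}\mu_x|(\partial \Gamma') = 1/(\mathscr{S}(e) + \ell(e'))$, which matches the density of $\mu^{\Gamma'}_{\can}|_{e'}$ from Theorem~\ref{cal1}. The main technical obstacle throughout is justifying the restriction identity $\omega_x|_{T(f)} = a\,\nu_f$ on an infinite subtree; this uses the Hilbert projection theorem on $T(f)$ together with the observation that modifying $\omega_x$ on $T(f)$ while preserving its boundary value at $f^+$ keeps $d^*\omega_x = \delta_x$ globally intact, so global minimality of $\|\omega_x\|^2$ forces local minimality.
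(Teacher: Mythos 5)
Your proposal is correct and follows essentially the same route as the paper's proof: decompose the norm-minimizers $\omega_{e'^{\pm}}$ (resp.\ $\omega_x$) along the edge into scalar multiples of $\nu_{e'}$ and $\nu_{\overline{e}'}$ using uniqueness of the minimizing flow on each branch, determine the coefficients by a one-variable quadratic minimization, and compute the total variation by noting that $\mu_{e'^+}-\mu_{e'^-}$ has constant sign on each of $\partial T(e')$ and $\partial T(\overline{e}')$, with your part (b) simply carrying out explicitly the refinement of $G'$ at $x$ that the paper invokes in one line. The only (shared) omission is the degenerate case where a branch $T(e')$ is compact, i.e.\ $R(e)=+\infty$, where one should observe directly that no current crosses $e'$, so $\mu_{e'^+}=\mu_{e'^-}$ while $\mu_{\phi,\can}(e)=0$ by Lemma~\ref{lemma:bridge}.
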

\begin{proof}
  Let $G'$ be any model for $\Gamma'$ compatible with the covering map. For $e' \in \EE(G')$ we let $T(e')$ and $\nu_{e'}$ be as in \S\ref{sec:ucover1}. Since $\omega_{e'^-}$ has minimum norm, it must be proportional to $\nu_{e'}$ when restricted to $T({e'})$ and proportional to $d\overline{e}'-\nu_{\overline{e}'}$ when restricted to $T(\overline{e}') \cup e'$. So
  \[\omega_{e'^+}=a \nu_{e'}+(1-a) (d\overline{e}'-\nu_{\overline{e}'})  \]
for some real number $a$. Similarly,
  \[\omega_{e'^-}=b \nu_{\overline{e}'} +(1-b)(de'-\nu_{e'})\]
  for some real number $b$. The values of $a$ and $b$ must minimize the quantities  
  \[
  \|\omega_{e'^+}\|^2 = a^2R(e)+(1-a)^2(R(\overline{e})+\ell(e))
  \quad \text{and} \quad
      \|\omega_{e'^-}\|^2 = b^2R(e)+(1-b)^2(R({e})+\ell(e)) \,,
  \]
  so one computes:
  \[a={R(\overline{e})+\ell(e)\over R(e)+R(\overline{e})+\ell(e)}
  \quad \text{and} \quad
 b={R(e)+\ell(e)\over R(e)+R(\overline{e})+\ell(e)} \,.
 \]
  Therefore,
    \[
  \begin{aligned}
  |\mu_{e'^+}-\mu_{e'^-}|(\partial\Gamma') &=|\mu_{e'^+}(\partial T(e'))-\mu_{e'^-}(\partial T(e'))|+|\mu_{\overline{e}'^+}(\partial T(\overline{e}'))-\mu_{\overline{e}'^-}(\partial T(\overline{e}'))|\\
&=((\omega_{e'^+})_{e'} + (\omega_{e'^-})_{e'}) + 
    ((\omega_{e'^-})_{\overline{e}'} + (\omega_{e'^+})_{\overline{e}'}) \\
&=2(1-(\omega_{e'^+})_{e'}+(\omega_{e'^-})_{e'}) \\
&=2(1-(1-a)-(1-b))\\
&= 2 {\ell(e)}/{(\mathscr{S}(e)+\ell(e))}\\
&=2 \mu_{\phi, \can}(e) \,.
\end{aligned}
\]
The last equality is by \eqref{conc1}. 
Part (b) follows from part (a) after refining $G'$. 
\end{proof}

\begin{bibdiv}
\begin{biblist}

\bib{ABBR1}{article}{
      author={Amini, Omid},
      author={Baker, Matthew},
      author={Brugall\'e, Erwan},
      author={Rabinoff, Joseph},
       title={Lifting harmonic morphisms {I}: metrized complexes and
  {B}erkovich skeleta},
        date={2015},
        ISSN={2197-9847},
     journal={Res. Math. Sci.},
      volume={2},
       pages={Art. 7, 67},
  url={https://doi-org.proxy.library.cornell.edu/10.1186/s40687-014-0019-0},
      review={\MR{3375652}},
}

\bib{ABBR2}{article}{
      author={Amini, Omid},
      author={Baker, Matthew},
      author={Brugall\'e, Erwan},
      author={Rabinoff, Joseph},
       title={Lifting harmonic morphisms {II}: {T}ropical curves and metrized
  complexes},
        date={2015},
        ISSN={1937-0652},
     journal={Algebra Number Theory},
      volume={9},
      number={2},
       pages={267\ndash 315},
  url={https://doi-org.proxy.library.cornell.edu/10.2140/ant.2015.9.267},
      review={\MR{3320845}},
}

\bib{ABBGNRS}{article}{
      author={Abert, Miklos},
      author={Bergeron, Nicolas},
      author={Biringer, Ian},
      author={Gelander, Tsachik},
      author={Nikolov, Nikolay},
      author={Raimbault, Jean},
      author={Samet, Iddo},
       title={On the growth of {$L^2$}-invariants for sequences of lattices in
  {L}ie groups},
        date={2017},
        ISSN={0003-486X},
     journal={Ann. of Math. (2)},
      volume={185},
      number={3},
       pages={711\ndash 790},
  url={https://doi-org.proxy.library.cornell.edu/10.4007/annals.2017.185.3.1},
      review={\MR{3664810}},
}

\bib{ABKS}{article}{
      author={An, Yang},
      author={Baker, Matthew},
      author={Kuperberg, Greg},
      author={Shokrieh, Farbod},
       title={Canonical representatives for divisor classes on tropical curves
  and the matrix-tree theorem},
        date={2014},
        ISSN={2050-5094},
     journal={Forum Math. Sigma},
      volume={2},
       pages={e24, 25},
         url={https://doi-org.proxy.library.cornell.edu/10.1017/fms.2014.25},
      review={\MR{3264262}},
}

\bib{ACFK}{article}{
      author={Ab\'ert, Mikl\'os},
      author={Csikv\'ari, P\'eter},
      author={Frenkel, P\'eter~E.},
      author={Kun, G\'abor},
       title={Matchings in {B}enjamini-{S}chramm convergent graph sequences},
        date={2016},
        ISSN={0002-9947},
     journal={Trans. Amer. Math. Soc.},
      volume={368},
      number={6},
       pages={4197\ndash 4218},
         url={https://doi-org.proxy.library.cornell.edu/10.1090/tran/6464},
      review={\MR{3453369}},
}

\bib{amini2014equidistribution}{unpublished}{
      author={Amini, Omid},
       title={Equidistribution of {W}eierstrass points on curves over
  non-{A}rchimedean fields},
        date={2014},
         url={http://arxiv.org/abs/1412.0926},
        note={Preprint available at \href{https://arxiv.org/abs/1412.0926}{{\tt
  ar{X}iv:1412.0926}}},
}

\bib{aminiFactorials}{unpublished}{
      author={Amini, Omid},
       title={Logarithmic tree factorials},
        date={2016},
         url={http://arxiv.org/abs/1611.02142},
        note={Preprint available at
  \href{https://arxiv.org/abs/1611.02142}{{\tt ar{X}iv:1611.02142}}},
}

\bib{Andre}{book}{
      author={Andr\'e, Yves},
       title={Period mappings and differential equations. {F}rom {$\Bbb C$} to
  {$\Bbb C_p$}},
      series={MSJ Memoirs},
   publisher={Mathematical Society of Japan, Tokyo},
        date={2003},
      volume={12},
        ISBN={4-931469-22-1},
        note={T\^ohoku-Hokkaid\^o lectures in arithmetic geometry, With
  appendices by F. Kato and N. Tsuzuki},
      review={\MR{1978691}},
}

\bib{Berk}{book}{
      author={Berkovich, Vladimir~G.},
       title={Spectral theory and analytic geometry over non-{A}rchimedean
  fields},
      series={Mathematical Surveys and Monographs},
   publisher={American Mathematical Society, Providence, RI},
        date={1990},
      volume={33},
        ISBN={0-8218-1534-2},
      review={\MR{1070709}},
}

\bib{baker2011metric}{article}{
      author={Baker, Matthew},
      author={Faber, Xander},
       title={Metric properties of the tropical {A}bel-{J}acobi map},
        date={2011},
        ISSN={0925-9899},
     journal={J. Algebraic Combin.},
      volume={33},
      number={3},
       pages={349\ndash 381},
         url={http://dx.doi.org/10.1007/s10801-010-0247-3},
      review={\MR{2772537}},
}

\bib{bestvina2014hyperbolicity}{article}{
      author={Bestvina, Mladen},
      author={Feighn, Mark},
       title={Hyperbolicity of the complex of free factors},
        date={2014},
        ISSN={0001-8708},
     journal={Adv. Math.},
      volume={256},
       pages={104\ndash 155},
  url={https://doi-org.proxy.library.cornell.edu/10.1016/j.aim.2014.02.001},
      review={\MR{3177291}},
}

\bib{Biggs}{article}{
      author={Biggs, Norman},
       title={Algebraic potential theory on graphs},
        date={1997},
        ISSN={0024-6093},
     journal={Bull. London Math. Soc.},
      volume={29},
      number={6},
       pages={641\ndash 682},
         url={http://dx.doi.org/10.1112/S0024609397003305},
      review={\MR{1468054}},
}

\bib{BJ}{article}{
      author={Boucksom, S\'ebastien},
      author={Jonsson, Mattias},
       title={Tropical and non-{A}rchimedean limits of degenerating families of
  volume forms},
        date={2017},
        ISSN={2429-7100},
     journal={J. \'Ec. polytech. Math.},
      volume={4},
       pages={87\ndash 139},
      review={\MR{3611100}},
}

\bib{BN07}{article}{
      author={Baker, Matthew},
      author={Norine, Serguei},
       title={Riemann-{R}och and {A}bel-{J}acobi theory on a finite graph},
        date={2007},
        ISSN={0001-8708},
     journal={Adv. Math.},
      volume={215},
      number={2},
       pages={766\ndash 788},
  url={https://doi-org.proxy.library.cornell.edu/10.1016/j.aim.2007.04.012},
      review={\MR{2355607}},
}

\bib{br}{article}{
      author={Baker, Matthew},
      author={Rumely, Robert},
       title={Harmonic analysis on metrized graphs},
        date={2007},
        ISSN={0008-414X},
     journal={Canad. J. Math.},
      volume={59},
      number={2},
       pages={225\ndash 275},
  url={https://doi-org.proxy.library.cornell.edu/10.4153/CJM-2007-010-2},
      review={\MR{2310616}},
}

\bib{baker2007potential}{book}{
      author={Baker, Matthew},
      author={Rumely, Robert},
       title={Potential theory and dynamics on the {B}erkovich projective
  line},
      series={Mathematical Surveys and Monographs},
   publisher={American Mathematical Society, Providence, RI},
        date={2010},
      volume={159},
        ISBN={978-0-8218-4924-8},
         url={http://dx.doi.org/10.1090/surv/159},
      review={\MR{2599526}},
}

\bib{BenSch}{article}{
      author={Benjamini, Itai},
      author={Schramm, Oded},
       title={Recurrence of distributional limits of finite planar graphs},
        date={2001},
        ISSN={1083-6489},
     journal={Electron. J. Probab.},
      volume={6},
       pages={no. 23, 13},
         url={https://doi-org.proxy.library.cornell.edu/10.1214/EJP.v6-96},
      review={\MR{1873300}},
}

\bib{BS13}{article}{
      author={Baker, Matthew},
      author={Shokrieh, Farbod},
       title={Chip-firing games, potential theory on graphs, and spanning
  trees},
        date={2013},
        ISSN={0097-3165},
     journal={J. Combin. Theory Ser. A},
      volume={120},
      number={1},
       pages={164\ndash 182},
  url={https://doi-org.proxy.library.cornell.edu/10.1016/j.jcta.2012.07.011},
      review={\MR{2971705}},
}

\bib{BSW}{unpublished}{
      author={Baik, Hyungryul},
      author={Shokrieh, Farbod},
      author={Wu, Chenxi},
       title={Limits of canonical forms on towers of {R}iemann surfaces},
        date={2018},
         url={http://arxiv.org/abs/1808.00477},
        note={Preprint available at
  \href{https://arxiv.org/abs/1808.00477}{{\tt ar{X}iv:1808.00477}}},
}

\bib{ChLoir}{incollection}{
      author={Chambert-Loir, Antoine},
       title={Heights and measures on analytic spaces. {A} survey of recent
  results, and some remarks},
        date={2011},
   booktitle={Motivic integration and its interactions with model theory and
  non-{A}rchimedean geometry. {V}olume {II}},
      series={London Math. Soc. Lecture Note Ser.},
      volume={384},
   publisher={Cambridge Univ. Press, Cambridge},
       pages={1\ndash 50},
      review={\MR{2885340}},
}

\bib{chinburg1993capacity}{article}{
      author={Chinburg, Ted},
      author={Rumely, Robert},
       title={The capacity pairing},
        date={1993},
        ISSN={0075-4102},
     journal={J. reine angew. Math.},
      volume={434},
       pages={1\ndash 44},
         url={http://dx.doi.org/10.1515/crll.1993.434.1},
      review={\MR{1195689}},
}

\bib{culler1986moduli}{article}{
      author={Culler, Marc},
      author={Vogtmann, Karen},
       title={Moduli of graphs and automorphisms of free groups},
        date={1986},
        ISSN={0020-9910},
     journal={Invent. Math.},
      volume={84},
      number={1},
       pages={91\ndash 119},
         url={https://doi-org.proxy.library.cornell.edu/10.1007/BF01388734},
      review={\MR{830040}},
}

\bib{dowdall2015dynamics}{article}{
      author={Dowdall, Spencer},
      author={Kapovich, Ilya},
      author={Leininger, Christopher~J.},
       title={Dynamics on free-by-cyclic groups},
        date={2015},
        ISSN={1465-3060},
     journal={Geom. Topol.},
      volume={19},
      number={5},
       pages={2801\ndash 2899},
  url={https://doi-org.proxy.library.cornell.edu/10.2140/gt.2015.19.2801},
      review={\MR{3416115}},
}

\bib{DoyleSnell}{book}{
      author={Doyle, Peter~G.},
      author={Snell, J.~Laurie},
       title={Random walks and electric networks},
      series={Carus Mathematical Monographs},
   publisher={Mathematical Association of America, Washington, DC},
        date={1984},
      volume={22},
        ISBN={0-88385-024-9},
      review={\MR{920811}},
}

\bib{Flanders}{article}{
      author={Flanders, Harley},
       title={A new proof of {R}. {F}oster's averaging formula in networks},
        date={1974},
     journal={Linear Algebra and Appl.},
      volume={8},
       pages={35\ndash 37},
      review={\MR{0329772}},
}

\bib{Foster}{incollection}{
      author={Foster, Ronald~M.},
       title={The average impedance of an electrical network},
        date={1948},
   booktitle={Reissner {A}nniversary {V}olume, {C}ontributions to {A}pplied
  {M}echanics},
   publisher={J. W. Edwards, Ann Arbor, Michigan},
       pages={333\ndash 340},
      review={\MR{0029773}},
}

\bib{Feng}{article}{
      author={Gu, Xianfeng},
      author={Guo, Ren},
      author={Luo, Feng},
      author={Sun, Jian},
      author={Wu, Tianqi},
       title={A discrete uniformization theorem for polyhedral surfaces {II}},
        date={2018},
        ISSN={0022-040X},
     journal={J. Differential Geom.},
      volume={109},
      number={3},
       pages={431\ndash 466},
  url={https://doi-org.proxy.library.cornell.edu/10.4310/jdg/1531188190},
      review={\MR{3825607}},
}

\bib{GvdP}{book}{
      author={Gerritzen, Lothar},
      author={van~der Put, Marius},
       title={Schottky groups and {M}umford curves},
      series={Lecture Notes in Mathematics},
   publisher={Springer, Berlin},
        date={1980},
      volume={817},
        ISBN={3-540-10229-9},
      review={\MR{590243}},
}

\bib{Heinz}{article}{
      author={Heinz, Niels},
       title={Admissible metrics for line bundles on curves and abelian
  varieties over non-{A}rchimedean local fields},
        date={2004},
        ISSN={0003-889X},
     journal={Arch. Math. (Basel)},
      volume={82},
      number={2},
       pages={128\ndash 139},
  url={https://doi-org.proxy.library.cornell.edu/10.1007/s00013-003-4744-7},
      review={\MR{2047666}},
}

\bib{handel2013free}{article}{
      author={Handel, Michael},
      author={Mosher, Lee},
       title={The free splitting complex of a free group, {I}: hyperbolicity},
        date={2013},
        ISSN={1465-3060},
     journal={Geom. Topol.},
      volume={17},
      number={3},
       pages={1581\ndash 1672},
  url={https://doi-org.proxy.library.cornell.edu/10.2140/gt.2013.17.1581},
      review={\MR{3073931}},
}

\bib{deJong}{unpublished}{
      author={{d}e Jong, Robin},
       title={{F}altings delta-invariant and semistable degeneration},
        date={2015},
         url={https://arxiv.org/abs/1511.06567},
        note={To appear in {\em J. Differential Geom.} Preprint available at
  \href{https://arxiv.org/abs/1511.06567}{{\tt ar{X}iv:1511.06567}}},
}

\bib{Kazhdan}{incollection}{
      author={Kazhdan, David},
       title={Arithmetic varieties and their fields of quasi-definition},
        date={1971},
   booktitle={Actes du {C}ongr\`es {I}nternational des {M}ath\'ematiciens
  ({N}ice, 1970), {T}ome 2},
   publisher={Gauthier-Villars, Paris},
       pages={321\ndash 325},
      review={\MR{0435081}},
}

\bib{Kaj}{inproceedings}{
      author={Kazhdan, David},
       title={On arithmetic varieties},
        date={1975},
   booktitle={Lie groups and their representations ({P}roc. {S}ummer {S}chool,
  {B}olyai {J}\'anos {M}ath. {S}oc., {B}udapest, 1971)},
   publisher={Halsted, New York},
       pages={151\ndash 217},
      review={\MR{0486316}},
}

\bib{Kirchhoff}{article}{
      author={Kirchhoff, Gustav},
       title={Ueber die {A}ufl{\"o}sung der {G}leichungen, auf welche man bei
  der {U}ntersuchung der linearen {V}ertheilung galvanischer {S}tr{\"o}me
  gef{\"u}hrt wird},
        date={1847},
     journal={Annalen der Physik},
      volume={148},
      number={12},
       pages={497\ndash 508},
}

\bib{luck}{book}{
      author={L\"uck, Wolfgang},
       title={{$L^2$}-invariants: theory and applications to geometry and
  {$K$}-theory},
      series={Ergebnisse der Mathematik und ihrer Grenzgebiete. 3. Folge. A
  Series of Modern Surveys in Mathematics},
   publisher={Springer-Verlag, Berlin},
        date={2002},
      volume={44},
        ISBN={3-540-43566-2},
         url={http://dx.doi.org/10.1007/978-3-662-04687-6},
      review={\MR{1926649}},
}

\bib{Lepage}{article}{
      author={Lepage, Emmanuel},
       title={Tempered fundamental group and metric graph of a {M}umford
  curve},
        date={2010},
        ISSN={0034-5318},
     journal={Publ. Res. Inst. Math. Sci.},
      volume={46},
      number={4},
       pages={849\ndash 897},
      review={\MR{2791009}},
}

\bib{LP:book}{book}{
      author={Lyons, Russell},
      author={Peres, Yuval},
       title={Probability on trees and networks},
      series={Cambridge Series in Statistical and Probabilistic Mathematics},
   publisher={Cambridge University Press, New York},
        date={2016},
      volume={42},
        ISBN={978-1-107-16015-6},
         url={http://dx.doi.org/10.1017/9781316672815},
      review={\MR{3616205}},
}

\bib{LyonsR}{article}{
      author={Lyons, Russell},
       title={Determinantal probability measures},
        date={2003},
        ISSN={0073-8301},
     journal={Publ. Math. Inst. Hautes \'Etudes Sci.},
      number={98},
       pages={167\ndash 212},
  url={https://doi-org.proxy.library.cornell.edu/10.1007/s10240-003-0016-0},
      review={\MR{2031202}},
}

\bib{Lyons}{article}{
      author={Lyons, Terry},
       title={A simple criterion for transience of a reversible {M}arkov
  chain},
        date={1983},
        ISSN={0091-1798},
     journal={Ann. Probab.},
      volume={11},
      number={2},
       pages={393\ndash 402},
  url={http://links.jstor.org/sici?sici=0091-1798(198305)11:2<393:ASCFTO>2.0.CO;2-S&origin=MSN},
      review={\MR{690136}},
}

\bib{magnus}{article}{
      author={Magnus, W.},
       title={Residually finite groups},
        date={1969},
        ISSN={0002-9904},
     journal={Bull. Amer. Math. Soc.},
      volume={75},
       pages={305\ndash 316},
         url={http://dx.doi.org/10.1090/S0002-9904-1969-12149-X},
      review={\MR{0241525}},
}

\bib{mann2014hyperbolicity}{article}{
      author={Mann, Brian},
       title={Hyperbolicity of the cyclic splitting graph},
        date={2014},
        ISSN={0046-5755},
     journal={Geom. Dedicata},
      volume={173},
       pages={271\ndash 280},
  url={https://doi-org.proxy.library.cornell.edu/10.1007/s10711-013-9941-3},
      review={\MR{3275303}},
}

\bib{mcmullen2013entropy}{article}{
      author={McMullen, Curtis~T.},
       title={Entropy on {R}iemann surfaces and the {J}acobians of finite
  covers},
        date={2013},
        ISSN={0010-2571},
     journal={Comment. Math. Helv.},
      volume={88},
      number={4},
       pages={953\ndash 964},
         url={http://dx.doi.org/10.4171/CMH/308},
      review={\MR{3134416}},
}

\bib{McMullenEntr}{article}{
      author={McMullen, Curtis~T.},
       title={Entropy and the clique polynomial},
        date={2015},
        ISSN={1753-8416},
     journal={J. Topol.},
      volume={8},
      number={1},
       pages={184\ndash 212},
         url={https://doi-org.proxy.library.cornell.edu/10.1112/jtopol/jtu022},
      review={\MR{3335252}},
}

\bib{mumford}{book}{
      author={Mumford, David},
       title={Curves and their {J}acobians},
   publisher={The University of Michigan Press, Ann Arbor, Mich.},
        date={1975},
      review={\MR{0419430}},
}

\bib{MK}{incollection}{
      author={Mikhalkin, Grigory},
      author={Zharkov, Ilia},
       title={Tropical curves, their {J}acobians and theta functions},
        date={2008},
   booktitle={Curves and abelian varieties},
      series={Contemp. Math.},
      volume={465},
   publisher={Amer. Math. Soc., Providence, RI},
       pages={203\ndash 230},
  url={https://doi-org.proxy.library.cornell.edu/10.1090/conm/465/09104},
      review={\MR{2457739}},
}

\bib{Neeman}{article}{
      author={Neeman, Amnon},
       title={The distribution of {W}eierstrass points on a compact {R}iemann
  surface},
        date={1984},
        ISSN={0003-486X},
     journal={Ann. of Math. (2)},
      volume={120},
      number={2},
       pages={317\ndash 328},
         url={https://doi-org.proxy.library.cornell.edu/10.2307/2006944},
      review={\MR{763909}},
}

\bib{pansu1993introduction}{incollection}{
      author={Pansu, Pierre},
       title={Introduction to {$L^2$} {B}etti numbers},
        date={1996},
   booktitle={Riemannian geometry ({W}aterloo, {ON}, 1993)},
      series={Fields Inst. Monogr.},
      volume={4},
   publisher={Amer. Math. Soc., Providence, RI},
       pages={53\ndash 86},
      review={\MR{1377309}},
}

\bib{rhodes1993sequences}{article}{
      author={Rhodes, John~A.},
       title={Sequences of metrics on compact {R}iemann surfaces},
        date={1993},
        ISSN={0012-7094},
     journal={Duke Math. J.},
      volume={72},
      number={3},
       pages={725\ndash 738},
         url={http://dx.doi.org/10.1215/S0012-7094-93-07227-4},
      review={\MR{1253622}},
}

\bib{Sho10}{article}{
      author={Shokrieh, Farbod},
       title={The monodromy pairing and discrete logarithm on the {J}acobian of
  finite graphs},
        date={2010},
        ISSN={1862-2976},
     journal={J. Math. Cryptol.},
      volume={4},
      number={1},
       pages={43\ndash 56},
         url={https://doi-org.proxy.library.cornell.edu/10.1515/JMC.2010.002},
      review={\MR{2660333}},
}

\bib{zhang1993admissible}{article}{
      author={Zhang, Shouwu},
       title={Admissible pairing on a curve},
        date={1993},
        ISSN={0020-9910},
     journal={Invent. Math.},
      volume={112},
      number={1},
       pages={171\ndash 193},
         url={http://dx.doi.org/10.1007/BF01232429},
      review={\MR{1207481}},
}

\end{biblist}
\end{bibdiv}

\end{document}